\def\quickop#1{\expandafter\DeclareMathOperator\csname
#1\endcsname{#1}}
\newcommand{\aP}{\mathcal{P}}
\newcommand{\sL}{\mathscr{L}}
\newcommand{\NSFSupport}[1]{This material is based upon work supported by the National Science Foundation under Grant No. {#1}} 
\newcommand{\MAHNSF}{DMS-1811189}
\newcommand{\MAHAddress}{University of California Los Angeles, Los Angeles, CA 90095}
\newcommand{\MAHemail}{\tt{mikehill@math.ucla.edu}}
\newcommand{\mycases}[1]{\left\{\begin{array}{ll}#1\end{array}\right.}
\newcommand{\timesover}[1]{\underset{#1}{\times}}
\newcommand{\otimesover}[1]{\underset{#1}{\otimes}}
\DeclareMathOperator{\Map}{Map}
\newcommand{\m}[1]{{\protect\underline{#1}}}
\newcommand{\mM}{\m{M}}
\newcommand{\mR}{\m{R}}
\newcommand{\mN}{\m{N}}
\newcommand{\mA}{\m{A}}
\newcommand{\mSet}{\m{\Set}}
\newcommand{\cc}[1]{\mathcal #1}
\newcommand{\cC}{\cc{C}}
\newcommand{\ccD}{\cc{D}}
\newcommand{\cO}{\cc{O}}
\newcommand{\cP}{\cc{P}}
\newcommand{\cA}{\cc{A}}
\newcommand{\id}{\textrm{id}}
\newcommand{\CoInd}{\textnormal{CoInd}}
\newcommand{\Set}{\mathcal Set}
\newcommand{\Ninfty}{N_\infty}
\newcommand{\Ab}{\mathcal Ab}
\newcommand{\cOrb}{\mathcal Orb}
\newcommand{\Tamb}{\mathcal Tamb}
\newcommand{\Mackey}{\mathcal Mackey}
\newcommand{\SymM}{\mathcal Sym}
\newcommand{\mC}{\m{\cC}}
\mathchardef\mhyphen=45
\newtheorem{theorem}{Theorem}[section]
\newtheorem{lemma}[theorem]{Lemma}
\newtheorem{corollary}[theorem]{Corollary}
\newtheorem{definition}[theorem]{Definition}
\newtheorem{proposition}[theorem]{Proposition}
\newtheorem{conjecture}[theorem]{Conjecture}
\newtheorem{remark}[theorem]{Remark}
\newtheorem{example}[theorem]{Example}
\newtheorem{warning}[theorem]{Warning}
\newtheorem*{definition*}{Definition}
\newtheorem*{motivating}{Main Question}
\newtheorem*{theorem*}{Theorem}
\newtheorem*{proposition*}{Proposition}
\newtheorem*{conjecture*}{Conjecture}
\newtheorem*{corollary*}{Corollary}
\newcommand{\defemph}[1]{\textbf{#1}}
\newcommand{\coinduction}{coinduction}
\newcommand{\bR}{{\mathbb R}}
\title{Bi-incomplete Tambara functors}
\author{Andrew J.~Blumberg}
\address{Department of Mathematics, Columbia University, 
New York, NY \ 10027}
\email{blumberg@math.columbia.edu}
\author{Michael A.~Hill}
\address{\MAHAddress}
\email{\MAHemail}
\thanks{\NSFSupport{DMS-1812064 and {\MAHNSF}}}
\begin{document}

\begin{abstract}
For an equivariant commutative ring spectrum $R$, $\pi_0 R$ has
algebraic structure reflecting the presence of both additive transfers
and multiplicative norms.  The additive structure gives rise to a
Mackey functor and the multiplicative structure yields the additional
structure of a Tambara functor.  If $R$ is an $N_\infty$ ring spectrum
in the category of genuine $G$-spectra, then all possible additive
transfers are present and $\pi_0 R$ has the structure of an incomplete
Tambara functor.  However, if $R$ is an $N_\infty$ ring spectrum in a
category of incomplete $G$-spectra, the situation is more subtle.

In this paper, we study the algebraic theory of Tambara structures on
incomplete Mackey functors, which we call bi-incomplete Tambara
functors.  Just as incomplete Tambara functors have compatibility
conditions that control which systems of norms are possible,
bi-incomplete Tambara functors have algebraic constraints arising from
the possible interactions of transfers and norms.  We give a complete
description of the possible interactions between the additive and
multiplicative structures.
\end{abstract}

\maketitle

\section{Introduction}

The complexity of the equivariant stable category for a finite group
$G$ is a consequence of the desideratum that the orbits $G/H$ must be
dualizable.  In contrast to the non-equivariant setting, there are
many possible variants of the equivariant stable category
corresponding to which orbits are dualizable.  Classically, this
structure is captured by a universe, an infinite-dimensional $G$-inner
product space that contains infinitely many copies of a collection of
finite-dimensional $G$-inner product spaces including $\bR^n$ for each
$n$.  A result of Lewis tells us that $G/H$ is dualizable in the
equivariant stable category structured by $U$ if $G/H$ embeds in
$U$~\cite{LewisChangeofUniverse}.  Another way of saying this is that
the universe controls which transfer maps exist.  On $\pi_0$, a shadow
of this is reflected in the Mackey functor structure.

Equivariant commutative ring spectra have traditionally also been
controlled by a universe in the form of the action of $\sL_G(U)$, the
$G$-equivariant linear isometries operad for a universe
$U$~\cite{LMS}.  Just as the ``additive'' structure of the equivariant
stable category is expressed by  
the presence of transfer maps, the multiplicative structure encoded in
the operad can be described in terms of the multiplicative norms
introduced in Hill--Hopkins--Ravenel~\cite{HHR}.
Building on this, we defined the notion of an
$N_\infty$-operad and showed that these operads control the transfers
and norms in the equivariant stable setting~\cite{BHNinfty}. Moreover, we
showed that the data of these operads is essentially algebraic,
encoded in {\em indexing systems}. An important aspect of this
perspective is that indexing systems capture a more general range of
possible compatible systems of norms or transfers than universes. We
review the definitions and combinatorics of indexing systems (and
associated ``indexing categories'') in Section~\ref{ssec:Indexing}.

This algebra becomes most concrete when we pass to $\pi_0$.  When we
restrict to a model of the equivariant stable category that only has
some transfer maps (e.g., the $\cO$-stable categories
of~\cite{BHOSpectra}), $\pi_0$ has the structure of an 
{\em incomplete Mackey functor}.  That is, for any indexing system we have a
notion of an incomplete Mackey functor associated to that indexing
system.

When working with $N_\infty$ ring spectra, it is standard in the
subject to assume that the additive structure is complete and study
variation in the multiplicative structure.  Then on $\pi_0$ we obtain
various kinds of incomplete Tambara functors, which are Mackey
functors equipped with additional norm maps satisfying certain
compatibilities~\cite{BHOTamb}.  Since incomplete Tambara functors are
less familiar than incomplete Mackey functors, we give a concrete
description.

Tambara functors can be expressed in terms of a particular equivariant
Lawvere theory, being a diagram category  indexed by a category of
``polynomials'' or ``bispans'' (see~\cite{Tambara} and~\cite{Strickland}).

\begin{definition*}[Definition~\ref{def:Poly}]
Let \(\cP^G\) be the category with objects finite \(G\)-sets and where
the morphisms from \(S\) to \(T\) are isomorphism classes of
``polynomials'': diagrams of the form 
\[
T_h\circ N_g\circ R_f:= S\xleftarrow{f}U_1\xrightarrow{g} U_2\xrightarrow{h}T
\]
with \(f\), \(g\), and \(h\) maps in \(\Set^G\).
\end{definition*}
\noindent The composition rules for these are somewhat involved; we review the
theory of polynomials in detail in Section~\ref{ssec:Polynomials}. 

Incomplete Tambara functors are defined by restricting the collection
of maps \(g\), parameterizing the ``norm'' \(N_g\), in the polynomials
to lie in a subcategory of \(\Set^G\). {\emph{A priori}}, this just describes a subgraph of \(\cP^G\);
unpacking the requirements for this subgraph to be a category led us to the definition of an indexing category.  When these maps are in an
indexing category \(\cO\), then this gives the category
\(\cP^G_{\cO}\) of polynomials
with exponents in \(\cO\).

The natural follow-up question is to determine what happens when we vary the ``additive''
structure as well, i.e., restricting the map \(h\) parameterizing the ``transfer'' \(T_h\) to also lie in some
subcategory of \(\Set^G\).  It is clear we at least need to restrict to
considering indexing categories here as well, but additional
compatibility will be required.

\begin{definition*}[Definition~\ref{def:OaOmPoly}]
    Let \(\cO_a\) and \(\cO_m\) be indexing categories, and let \(\cP^G_{\cO_a,\cO_m}\) be the wide directed subgraph of \(\cP^G\) so that the arrows from \(S\) to \(T\) are the isomorphism classes of polynomials
    \[
    T_h\circ N_g\circ R_f:= S\xleftarrow{f}U_1\xrightarrow{g} U_2\xrightarrow{h}T
    \]
    with \(g\in\cO_m\) and \(h\in\cO_a\).
\end{definition*}

In these terms, the following is the main question studied in this paper.

\begin{motivating}
    What compatibility must we have between the additive
    indexing category \(\cO_a\) and the multiplicative indexing category \(\cO_m\) so that the subgraph \(\cP^{G}_{\cO_a,\cO_m}\) of \(\cP^G\) is a subcategory?
\end{motivating}

The subtle point here arises from the ``equivariant distributive property'' which records how to take a norm of a sum or a transfer. Following Mazur, we call this kind of interchange ``Tambara reciprocity'' \cite{MazurThesis}, and a key feature is that the formulae depend only on \(G\) and its subgroups. In general, this will involve transfers and norms connecting many intermediate subgroups.

\begin{example}
    For \(G=C_4\) with generator \(\gamma\), the norm \(N_{e}^{C_4}\) associated to the unique map \(C_4\to\ast\) satisfies
    \begin{multline*}
        N_e^{C_4}(a+b)=
        N_e^{C_4}(a)+ N_e^{C_4}(b)+ tr_{C_2}^{C_4}\big(N_e^{C_2}(a)\cdot\gamma N_e^{C_2}(b)\big) \\
        +tr_e^{C_4}(a\cdot\gamma a\cdot\gamma^2a\cdot\gamma^3b+a\cdot\gamma a\cdot\gamma^2b\cdot\gamma^3b+a\cdot\gamma b\cdot\gamma^2b\cdot\gamma^3 b).
    \end{multline*}
\end{example}

Incomplete Tambara functors are specified by including only a subset
of the possible norm maps.  The
required compatibility check ensures that if we have the norm \(N_K^H\), then we
must also have any of the norms that occur in any Tambara reciprocity
formula.  If we also include only some of the transfers, then we have much
more stringent conditions: as the example shows, we run into several
different transfers in the Tambara reciprocity formulae.

In general, distributivity of the twisted product, parameterized by
some \(g\), over twisted sums is recorded by the ``dependent
product'' \(\Pi_g\) (we review this in Definition~\ref{def:DepProd}
below).  This allows us to concisely state the required compatibility
data.

\begin{definition*}[Definition~\ref{def:Compatible}]
Let $\cO_a$ and $\cO_m$ be indexing categories.  The indexing category
$\cO_m$ {\defemph{distributes over}} $\cO_a$ if for all maps $g\colon
S\to T$ in $\cO_m$, we have
\[
\Pi_g\big((\cO_a)_{/S}\big)\subset(\cO_a)_{/T}.
\]
In this situation, we will say that the pair
\((\cO_a,\cO_m)\) is {\defemph{compatible}}. 
\end{definition*}

The first main theorem, proved in Section~\ref{sec:AddIncomplete}, guarantees that this is the right notion.

\begin{theorem*}[{Theorem~\ref{thm:Compatible}}]
If \((\cO_a,\cO_m)\) is compatible, then \(\cP^{G}_{\cO_a,\cO_m}\) is
a subcategory of \(\cP^G\).
\end{theorem*}

This gives rise to the following basic definition of a bi-incomplete
Tambara functor.

\begin{definition*}[Definition~\ref{def:OaOmTamb}]
Let \((\cO_a,\cO_m)\) be a compatible pair of indexing
subcategories. An \((\cO_a,\cO_m)\)-semi-Tambara functor is a product
preserving functor 
\[
\mR\colon \cP^G_{\cO_{a},\cO_{m}}\to \Set.
\]
An \((\cO_a,\cO_m)\)-Tambara functor is an
\((\cO_a,\cO_m)\)-semi-Tambara functor \(\mR\) such that for all
finite \(G\)-sets \(T\), \(\mR(T)\) is an abelian group.
	
We write \((\cO_a,\cO_m)\mhyphen\Tamb\) to denote the category of
\((\cO_a,\cO_m)\)-Tambara functors.
\end{definition*}

Bi-incomplete Tambara functors have good categorical properties
analogous to the properties of (incomplete) Tambara functors.  We review these in Section~\ref{sec:CatProperties}, including a discussion of the additively incomplete box product. 

In Section~\ref{sec:ReWrite}, we express the notion of compatibility
in more concrete terms, eventually reducing verification of
compatibility to checking a tractable combinatorial condition.

\begin{theorem*}[Theorem~\ref{thm:OaOmCompatibility}]
Let \(\cO_a\) and \(\cO_m\) be indexing categories. Then
\((\cO_a,\cO_m)\) is compatible if and only if for every pair of
subgroups \(K\subset H\) such that \(H/K\) is an admissible \(H\)-set
for \(\cO_m\) and for every admissible \(K\)-set \(T\) for \(\cO_a\),
the coinduced \(H\)-set \(\Map^K(H,T)\) is admissible for \(\cO_a\). 
\end{theorem*}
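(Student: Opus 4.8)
The plan is to prove both implications by reducing the slice-wise condition that defines compatibility to a single \emph{atomic} case, namely that of an orbit projection, where the dependent product is literally a coinduction. Recall that compatibility asks that $\Pi_g\big((\cO_a)_{/S}\big)\subset(\cO_a)_{/T}$ for every $g\colon S\to T$ in $\cO_m$, where (Definition~\ref{def:DepProd}) the dependent product $\Pi_g$ is the right adjoint to the pullback $g^*$, computed fiberwise over the target. Both membership in $(\cO_a)_{/T}$ and the formation of $\Pi_g$ are local over the orbits of $T=\coprod_j G/H_j$: under the equivalence $\Set^G_{/(G/H_j)}\simeq\Set^{H_j}$ the restriction of $g$ over $G/H_j$ becomes an $H_j$-set mapping to a point, and this lies in $\cO_m$ exactly when that source $H_j$-set is admissible for $\cO_m$. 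The first step is therefore to observe that it suffices to treat the case $G=H$, $T=\ast$, working $H$-equivariantly.

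Next I would reduce the source. Writing the admissible $H$-set $S$ as a coproduct of its orbits $S=\coprod_i H/K_i$, the fold $g\colon S\to\ast$ decomposes and the dependent product is multiplicative over this decomposition: for $X\to S$ with components $X_i\to H/K_i$ one has $\Pi_g(X)\cong\prod_i\Pi_{g_i}(X_i)$ with $g_i\colon H/K_i\to\ast$. Because the admissible $H$-sets for $\cO_a$ are closed under finite products (part of the structure of an indexing category, reviewed in Section~\ref{ssec:Indexing}), $\Pi_g(X)$ is admissible for $\cO_a$ as soon as each factor is; and since $S$ admissible for $\cO_m$ forces each orbit $H/K_i$ to be admissible for $\cO_m$, it is enough to handle a single orbit projection $g\colon H/K\to\ast$ with $H/K$ admissible for $\cO_m$.

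The heart of the argument is the identification of $\Pi_g$ in this atomic case. Under the slice equivalence $\Set^H_{/(H/K)}\simeq\Set^K$, sending $X\to H/K$ to its fiber $T$ over the identity coset with its residual $K$-action, the pullback $g^*$ becomes restriction to $K$, so its right adjoint $\Pi_g$ becomes coinduction $\CoInd_K^H=\Map^K(H,-)$. I would also check, via the corresponding change-of-groups identification of indexing data, that $X\to H/K$ lies in $(\cO_a)_{/(H/K)}$ precisely when the $K$-set $T$ is admissible for $\cO_a$, so that the atomic instance of compatibility reads exactly as the displayed condition: for every admissible $K$-set $T$, the coinduced $H$-set $\Map^K(H,T)$ is admissible for $\cO_a$. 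Assembling the two reductions gives sufficiency; for necessity, note that the map $g\colon G/K\to G/H$ lies in $\cO_m$ exactly when its fiber $H/K$ is admissible for $\cO_m$, and applying compatibility to the slice object over $G/K$ corresponding to $T$ yields $\Map^K(H,T)\in(\cO_a)_{/(G/H)}$, i.e.\ admissibility of the coinduced set.

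I expect the main obstacle to be the stabilizer bookkeeping in the two localization steps rather than any deep input: in particular, verifying the multiplicativity $\Pi_g(X)\cong\prod_i\Pi_{g_i}(X_i)$ together with its $H$-action, and confirming that the slice equivalences identify $g^*$ with restriction and hence $\Pi_g$ with $\Map^K(H,-)$, while matching the slice description of $(\cO_a)_{/S}$ against admissibility of fibers. Each step uses only the dependent-product formalism and the change-of-group compatibilities of indexing categories, but these must be lined up precisely; this is where I would be most careful to avoid off-by-a-conjugation errors.
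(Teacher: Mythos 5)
Your argument is correct and matches the paper's proof in all essentials: both reduce to the atomic case of an orbit projection $G/K\to G/H$, where the dependent product is identified with coinduction $\Map^K(H,-)$ (the paper invokes \cite[Proposition 2.3]{HMazur} for exactly this), and both use closure of $\cO_a$-admissible sets under finite products, i.e.\ pullback stability, to absorb the non-orbit part of the decomposition. The only difference is organizational: the paper factors a general map of $\cO_m$ into disjoint unions of composites of fold maps and orbit maps and checks each generator type separately via $\Pi_{g\circ f}\cong\Pi_g\circ\Pi_f$, whereas you localize over the target orbits and then apply a product-of-sections formula over a point (which silently covers the empty-fiber case as the empty product), and this repackaging changes nothing of substance.
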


This formulation makes it clear that there are harsh necessary
conditions on a pair \((\cO_a,\cO_m)\) for them to be compatible; we
explore these in Section~\ref{sec:Limits}.  These conditions alone rule
out about half of all possible pairs!

\begin{proposition*}[Proposition~\ref{prop:OmIsometryLike}]
If \((\cO_a,\cO_m)\) is a compatible pair and \(H/K\) is an
\(\cO_m\)-admissible set for \(H\), then for every \(L\subset H\) such
that \(K\subset L\), the \(H\)-set \(H/L\) is \(\cO_a\)-admissible.  
\end{proposition*}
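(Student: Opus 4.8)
The plan is to deduce this from the combinatorial reformulation of compatibility in Theorem~\ref{thm:OaOmCompatibility}, together with the closure of an indexing system under subobjects. Since \((\cO_a,\cO_m)\) is compatible and \(H/K\) is \(\cO_m\)-admissible, that theorem tells us that for \emph{every} \(\cO_a\)-admissible \(K\)-set \(T\), the coinduced \(H\)-set \(\Map^K(H,T)\) is \(\cO_a\)-admissible. The strategy is to choose \(T\) as simply as possible --- a trivial \(K\)-set --- and then to locate \(H/L\) as a single orbit inside \(\Map^K(H,T)\); closure under subobjects then finishes the argument.

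Concretely, I would take \(T=\{0,1\}\) with trivial \(K\)-action, which is \(\cO_a\)-admissible because trivial sets always are. For a trivial target the coinduction simplifies: a \(K\)-equivariant map \(\phi\colon H\to T\) satisfies \(\phi(kh)=\phi(h)\), so it is constant on the cosets \(Kh\) and descends to a function on \(K\backslash H\). Thus \(\Map^K(H,T)\) is naturally the \(H\)-set of functions \(K\backslash H\to T\); using \(K\backslash H\cong H/K\) as \(H\)-sets, this is the \(H\)-set of subsets of \(H/K\) with its standard permutation action.

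The heart of the argument is then a stabilizer computation. Let \(O_0=\{\,lK : l\in L\,\}\subseteq H/K\) be the \(L\)-orbit of the identity coset, regarded as a single subset and hence as a single element of \(\Map^K(H,T)\). I would show \(\Stab_H(O_0)=L\): for \(h\in H\) we have \(h\cdot O_0=O_0\) iff \(hl\in LK=L\) for every \(l\in L\) (here using \(K\subseteq L\)), which holds iff \(hL\subseteq L\), i.e.\ iff \(h\in L\). Consequently the \(H\)-orbit of \(O_0\) inside \(\Map^K(H,T)\) is isomorphic to \(H/L\).

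Putting these together, \(H/L\) occurs as a sub-\(H\)-set of the \(\cO_a\)-admissible \(H\)-set \(\Map^K(H,T)\), so \(H/L\) is \(\cO_a\)-admissible by closure of \(\cO_a\) under subobjects. The only step requiring real care is the identification of the coinduction on a trivial set with the \(H\)-set of subsets of \(H/K\) and the accompanying stabilizer computation; everything else is a direct appeal to Theorem~\ref{thm:OaOmCompatibility} and the indexing-system axioms. As a sanity check, the case \(L=K\) recovers the expected containment \(\cO_m\subseteq\cO_a\) at the level of admissible orbits.
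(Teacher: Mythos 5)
Your proof is correct and is essentially the paper's own argument: the paper also applies Theorem~\ref{thm:OaOmCompatibility} to the trivial two-element $K$-set, identifies the coinduction with $\Map(H/K,\{a,b\})$, and picks out the indicator function of $L/K$ (your subset $O_0$) whose stabilizer is $L$, so that $H/L$ appears as a summand of an admissible set. The only cosmetic difference is that you phrase the distinguished element as a subset of $H/K$ rather than as a two-valued function.
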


To get a sense for how this plays out in practice, we analyze the
classical examples coming from equivariant little disks and
equivariant linear isometries operads.  On the one hand, we find that
the little disks do not necessarily interact well with each other:

\begin{corollary*}[Corollary~\ref{cor:SelfIncompatible}]
For any non-simple group \(G\), there is a universe \(U\) such that
the indexing category associated to the little disks in \(U\) is not
compatible with itself. 
\end{corollary*}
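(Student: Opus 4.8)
The plan is to produce, for a well-chosen normal subgroup, a single universe whose little-disks indexing category already fails the necessary condition of Proposition~\ref{prop:OmIsometryLike} taken with $\cO_a=\cO_m$. Since $G$ is non-simple, fix a normal subgroup $N\triangleleft G$ with $1\neq N\neq G$. I will build a universe $U$ for which the free orbit $G/1$ is $\cO(U)$-admissible while $G/N$ is not. Because $1\subsetneq N\subseteq G$ and $G/1$ is admissible, Proposition~\ref{prop:OmIsometryLike} (applied to the pair $(\cO(U),\cO(U))$ with $K=1$, $L=N$, $H=G$) would force $G/N$ to be admissible if $\cO(U)$ were compatible with itself; this contradiction is what I am after.

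Recall that the little-disks indexing category $\cO(U)$ has as admissible $H$-sets exactly the finite $H$-sets that embed $H$-equivariantly into $U$ \cite{BHNinfty}, so that $G/L$ is admissible precisely when $U$ contains a vector with isotropy group exactly $L$. I will take $U=\bR^\infty\oplus V^\infty$, where $V$ is a faithful real $G$-representation with $V^N=0$. Granting such a $V$, the two required facts are immediate. Faithfulness makes $\bigcup_{g\neq 1}V^{\langle g\rangle}$ a finite union of proper subspaces, so a generic vector of $V$ has trivial stabilizer and its orbit realizes an embedding $G/1\hookrightarrow U$. On the other hand $U^N=\bR^\infty\oplus (V^N)^\infty=\bR^\infty$, on which $G$ acts trivially; hence every $N$-fixed vector of $U$ has $\Stab$ equal to all of $G\neq N$, so no vector has isotropy exactly $N$ and $G/N$ does not embed. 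This is exactly the failure claimed above.

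The main obstacle is the existence of a faithful real representation $V$ with $V^N=0$, and this is where the choice of a \emph{proper} nontrivial normal $N$ (i.e., non-simplicity) is used. I would take $V$ to be the realification of the sum of all complex irreducibles $W$ with $N\not\subseteq\ker W$. For each such $W$ the fixed space $W^N$ is a $G$-subrepresentation since $N$ is normal, hence is $0$ or $W$ by irreducibility; it is not $W$ as $N\not\subseteq\ker W$, so $W^N=0$ and therefore $V^N=0$. For faithfulness I must show $\bigcap_{N\not\subseteq\ker W}\ker W=1$. If some $1\neq g$ lay in this intersection, then first $g\notin N$, because the irreducibles separate the nontrivial elements of $N$ and any $W$ detecting such an element automatically has $N\not\subseteq\ker W$. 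Now pick any nontrivial irreducible $\rho$ of $N$: every irreducible constituent $W$ of $\mathrm{Ind}_N^G(\rho)$ restricts on $N$ to a sum of $G$-conjugates of $\rho$, hence is nontrivial on $N$, so $N\not\subseteq\ker W$ and thus $g\in\ker W$. Therefore $g$ acts trivially on $\mathrm{Ind}_N^G(\rho)$; but $g\notin N=\bigcap_x xNx^{-1}$ moves some coset $xN$, so $g$ permutes the summands of the induced representation nontrivially and cannot act trivially, a contradiction. This gives the faithful $V$, completing the construction and, through Proposition~\ref{prop:OmIsometryLike}, the proof.
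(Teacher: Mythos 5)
Your proof is correct and follows essentially the same route as the paper: both produce a universe $\infty(1+V)$ with $V$ faithful but $V^N=0$, so that $G/1$ is admissible while $G/N$ is not, and then invoke the sub-conjugacy obstruction of Proposition~\ref{prop:OmIsometryLike}. The only difference is cosmetic --- the paper takes $V=\mathrm{Ind}_N^G$ of the reduced regular representation of $N$ and cites \cite{BHNinfty} for its properties, whereas you build the (same, up to multiplicities) representation from the irreducibles nontrivial on $N$ and verify faithfulness by hand.
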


On the other hand, as is implicit in the classical literature, the
indexing category for the linear isometries operad on a universe \(U\)
is always compatible with the indexing category corresponding to the
little disks operad for \(U\).

\begin{proposition*}[Proposition~\ref{prop:DisksIsomWorks}]
Let $U$ be a universe for $G$, let $\cO_a$ be the indexing category
associated to the little disks operad for $U$ and let $\cO_m$ be the
indexing category associated to the linear isometries operad for
$U$. Then \((\cO_a,\cO_m)\) is compatible. 
\end{proposition*}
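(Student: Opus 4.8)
The plan is to verify the combinatorial criterion from Theorem~\ref{thm:OaOmCompatibility}. Let $\cO_a$ be the indexing category for the little disks operad on $U$ and $\cO_m$ the one for the linear isometries operad on $U$. By that theorem, compatibility is equivalent to the following: for every pair $K\subset H$ with $H/K$ admissible for $\cO_m$, and every $\cO_a$-admissible $K$-set $T$, the coinduced $H$-set $\Map^K(H,T)$ is $\cO_a$-admissible. So first I would recall the two classical characterizations of admissibility in terms of $U$. For the linear isometries operad, the $H$-set $H/K$ is $\cO_m$-admissible precisely when the induced representation $\mathrm{Ind}_K^H(\rho)$ embeds in $U$ as an $H$-representation for every subrepresentation $\rho$ of $U|_K$ --- equivalently, $U|_K$ and $U|_H$ carry the full universe structure compatibly, which is the statement that $U$ is closed under the relevant induction. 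For the little disks operad, $H/K$ is $\cO_a$-admissible exactly when $H/K$ embeds in $U$, i.e.\ when the permutation representation $\bR[H/K]$ (minus the trivial summand, the reduced permutation representation) is a subrepresentation of $U$.

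The key step is then to translate the coinduction condition into a statement about representations and exploit the self-duality of finite $G$-sets. Since $K\subset H$ has finite index, coinduction and induction agree: $\Map^K(H,T)\cong H\times_K T$ as $H$-sets (finite index makes $\mathrm{CoInd}=\mathrm{Ind}$). Thus I must show that if $T$ is a $K$-set embedding into $U|_K$ and $H/K$ is $\cO_m$-admissible, then $H\times_K T$ embeds into $U$ as an $H$-set. Passing to reduced permutation representations, the embedding $T\hookrightarrow U|_K$ gives $\tilde{\bR}[T]\subset U|_K$, and I would like to conclude $\tilde{\bR}[H\times_K T]=\mathrm{Ind}_K^H\tilde{\bR}[T]$ embeds in $U$. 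This is exactly where $\cO_m$-admissibility of $H/K$ enters: the linear isometries condition guarantees that $U$ is closed under induction of those $K$-subrepresentations of $U$ along $K\subset H$ that arise, so $\mathrm{Ind}_K^H$ of a subrepresentation of $U|_K$ lands back in $U$.

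I expect the main obstacle to be pinning down precisely the representation-theoretic content of $\cO_m$-admissibility for the linear isometries operad and checking that it delivers exactly the induction-closure needed to handle an arbitrary $\cO_a$-admissible $T$ (not just the regular representation). The classical fact is that $U$ being a universe, together with $H/K$ admissible for the linear isometries operad, means $U|_H\cong\mathrm{Ind}_K^H(U|_K)$ up to the appropriate completeness, so induction of any $K$-subrepresentation of $U$ stays inside $U|_H$; the subtlety is making sure the bookkeeping with the reduced permutation representations and the trivial summands works out so that an $H$-set embedding (not merely a representation embedding) is produced. Once that translation is secure, the remaining steps are routine: I would assemble the chain $T\hookrightarrow U|_K \Rightarrow \mathrm{Ind}_K^H\tilde{\bR}[T]\subset U \Rightarrow H\times_K T\hookrightarrow U$, and invoke the little disks characterization to conclude $\Map^K(H,T)$ is $\cO_a$-admissible, which by Theorem~\ref{thm:OaOmCompatibility} establishes that $(\cO_a,\cO_m)$ is compatible.
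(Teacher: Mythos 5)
There is a genuine gap, and it sits at the center of your argument: the claim that ``since $K\subset H$ has finite index, coinduction and induction agree: $\Map^K(H,T)\cong H\times_K T$ as $H$-sets'' is false. Induction and coinduction agree for \emph{representations} (and for Mackey functors and spectra, via the Wirthm\"uller isomorphism), but not for finite $G$-sets: $|H\times_K T|=[H:K]\cdot|T|$ while $|\Map^K(H,T)|=|T|^{[H:K]}$. For instance, with $K=e$, $H=C_2$, and $T=\ast$, one has $C_2\times_e\ast\cong C_2$ but $\Map^e(C_2,\ast)\cong\ast$. Indeed, the fact that coinduction of sets is multiplicative rather than additive is the entire reason the dependent product, the norms, and the criterion of Theorem~\ref{thm:OaOmCompatibility} take the form they do. Because of this, your chain of implications establishes (at best) that the \emph{induced} $H$-set $H\times_K T$ embeds in $U$, which says nothing about the coinduced set $\Map^K(H,T)$ whose admissibility is what Theorem~\ref{thm:OaOmCompatibility} actually requires.

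The correct argument (and the paper's) uses induction $\cong$ coinduction only where it is valid, namely for the infinite-dimensional representation $i_K^\ast U$. Starting from a $K$-equivariant embedding $T\hookrightarrow i_K^\ast U$ (little-disks admissibility of $T$), apply the \emph{functor} $\Map^K(H,-)$ to obtain $\Map^K(H,T)\hookrightarrow\Map^K(H,i_K^\ast U)$; then identify the coinduced representation $\Map^K(H,i_K^\ast U)$ with the induced one $\bR[H]\otimes_{\bR[K]}U\cong\bR[H/K]\otimes U$, which embeds isometrically in $U$ precisely because $H/K$ is admissible for the linear isometries operad (Theorem~\ref{thm:Admiss}). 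Composing gives $\Map^K(H,T)\hookrightarrow U$, i.e.\ little-disks admissibility of the coinduced set. Your instinct to invoke a Wirthm\"uller-type isomorphism was right, but it must be applied to the ambient universe, not to the finite set $T$. Your worry about bookkeeping with reduced permutation representations is also unnecessary: the little-disks criterion is stated directly as a $G$-set embedding, and coinduction can be applied to that embedding directly.
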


We close in Section~\ref{sec:Dragons} by proving some basic
change-of-group results and then putting forward a series of
conjectures about an ``external'' form of bi-incomplete Tambara
functors.  The thesis work of Mazur and of Hoyer~\cite{MazurThesis,
  HoyerThesis} showed that Tambara functors were Mackey functors with
additional structure, i.e., external norm maps.  Our conjectures
outline how such a description should work for bi-incomplete Tambara
functors, exhibiting them as ordinary incomplete Mackey functors
together with additional structure.

A key step is producing additively incomplete versions of the norm
functor.  

\begin{conjecture*}[Conjecture~\ref{conj:NormsCompat}]
    If \((\cO_a,\cO_m)\) is a compatible pair of indexing categories, and \(H/K\) is a \(\cO_m\)-admissible \(H\)-set, then there is a norm functor
    \[
        N_K^H\colon i_K^\ast\cO_a\mhyphen\Mackey\to i_H^\ast\cO_a\mhyphen\Mackey
    \]
    that is symmetric monoidal with respect to the box product.
\end{conjecture*}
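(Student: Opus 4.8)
The plan is to construct $N_K^H$ algebraically, imitating Hoyer's norm functor on Mackey functors \cite{HoyerThesis}, and to isolate compatibility as exactly the input that keeps the construction inside the additively incomplete world. The feature to keep in mind throughout is that $N_K^H$ is \emph{not} additive: like the Hill--Hopkins--Ravenel norm on spectra, it is an indexed box product and so cannot be defined on generators and then extended by colimits. I would define it as the $H/K$-indexed box product, and record that on a free (representable) Mackey functor $A_S$ for a $K$-set $S$ it is computed by coinduction, $N_K^H A_S \cong A_{\Map^K(H,S)}$, with the Burnside Mackey functor (the unit) sent to the unit. Functoriality of the assignment is underwritten by Theorem~\ref{thm:Compatible}: since $\cP^G_{\cO_a,\cO_m}$ is a genuine category, the composite relations there among the restrictions $R_f$, norms $N_g$, and transfers $T_h$ are precisely the relations one must respect to check that the indexed box product is well defined on morphisms.

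The conceptual heart of the statement is that the output lands in $i_H^\ast\cO_a\mhyphen\Mackey$, i.e. that every transfer appearing in $N_K^H M$ is $\cO_a$-admissible for $H$. A transfer of $M$ along an $\cO_a$-admissible $K$-set $T$ is pushed through the norm along $g\colon H/K\to H/H$, and Tambara reciprocity (as in the $C_4$ example) unfolds such a norm of a transfer into a transfer along the dependent product $\Pi_g$ (Definition~\ref{def:DepProd}); on free objects this is exactly the passage from $A_T$ to $A_{\Map^K(H,T)}$. Compatibility (Definition~\ref{def:Compatible}) asserts that $\Pi_g\big((i_K^\ast\cO_a)_{/S}\big)\subset(\cO_a)_{/T}$, and in the concrete form of Theorem~\ref{thm:OaOmCompatibility} this is precisely the statement that the coinduced sets $\Map^K(H,T)$ are $\cO_a$-admissible for $H$. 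So the hypothesis of the conjecture is exactly the condition making $N_K^H$ land in $i_H^\ast\cO_a$-Mackey functors.

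For symmetric monoidality, the key input is the exponential law $\Map^K(H,S\times T)\cong\Map^K(H,S)\times\Map^K(H,T)$ for coinduction. Because the box product of representables is the representable on the product of $G$-sets, this gives $N_K^H(M\Box N)\cong N_K^H M\Box N_K^H N$ on free objects, and the unit isomorphism holds because the Burnside Mackey functor is fixed. The associativity, symmetry, and unit coherences should then descend from the corresponding coherences for the indexed box product.

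I expect the genuine obstacle to lie in globalizing these two steps off the representables. Because $N_K^H$ is not additive, a general $M$ must be fed through the indexed box product itself rather than presented as a naive colimit of representables, and one must then check that the distributivity isomorphism $N_K^H(M\Box N)\cong N_K^H M\Box N_K^H N$ is natural and coherent while interacting with transfers that are merely $\cO_a$-admissible --- equivalently, that the monoidal structure maps are compatible with the additive and transfer structure. I do not expect this to require a genuinely new idea so much as a disciplined accounting of Tambara reciprocity across the box product; the real content of the conjecture is the clean identification, carried out in the second step, of well-definedness with the compatibility of $(\cO_a,\cO_m)$.
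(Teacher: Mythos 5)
First, a point of order: the paper does not prove this statement. It is recorded as Conjecture~\ref{conj:NormsCompat}, and the norm functor of Definition~\ref{def:Norms} is only introduced \emph{conditionally} on it, so there is no proof in the paper to compare against; a complete argument would be new content, not a reconstruction. That said, your sketch does follow the route the authors envision: the Hoyer--Mazur construction of the norm as the left Kan extension along coinduction on the incomplete Lindner categories (equivalently the \(H/K\)-indexed box product), with the object-level input supplied by Theorem~\ref{thm:OaOmCompatibility} and symmetric monoidality coming from the exponential law \(\Map^K(H,S\times T)\cong\Map^K(H,S)\times\Map^K(H,T)\) together with Day-convolution formalities.

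The genuine gap is in your second paragraph, and it is exactly the point the authors isolate as the conjectural content. Theorem~\ref{thm:OaOmCompatibility} is a statement about \emph{objects}: if \(T\) is an \(\cO_a\)-admissible \(K\)-set, then \(\Map^K(H,T)\) is an \(\cO_a\)-admissible \(H\)-set. What you need before you can even write down the functor \(\cA^K_{i_K^\ast\cO_a}\to\cA^H_{i_H^\ast\cO_a}\) along which to Kan extend is that coinduction preserves the \emph{morphisms} of the indexing category: for \(h\colon S\to T\) in \(i_K^\ast\cO_a\), the coinduced map \(\Map^K(H,h)\) must again lie in \(i_H^\ast\cO_a\). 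Membership of \(h\) in an indexing category is a condition on its fibers over each orbit of \(T\), and the fiber of \(\Map^K(H,h)\) over a point \(\phi\) with stabilizer \(J\subset H\) is a twisted product of fibers of \(h\) indexed by \(K\backslash H\), regarded as a \(J\)-set; showing this is \(\cO_a\)-admissible for \(J\) involves coinductions between subgroups other than the pair \(K\subset H\) you started with, and does not visibly follow from the object-level statement. This morphism-level closure is what the body of the paper states as Conjecture~\ref{conj:NormsCompat} and calls the ``twisted version of Lemma~\ref{lem:ClosureUnderProducts}.'' Your identification of well-definedness with compatibility conflates the proved object-level closure with this open morphism-level closure, so the concluding optimism that ``no genuinely new idea'' is required is precisely the point at issue.
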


These would assemble into the incomplete Mackey functor version of the
multiplicative symmetric monoidal Mackey functor structure on the
\(G\)-Mackey functors. In particular, we would hope to have the
analogue of the Hoyer--Mazur theorem that Tambara functors are
\(G\)-commutative monoids in Mackey functors. 

\begin{conjecture*}[Conjecture~\ref{conj:OmCommMonoids}]
For any compatible pair of indexing categories \((\cO_a,\cO_m)\), there is an equivalence of
categories between \(\cO_m\)-commutative monoids in \(\cO_a\)-Mackey
functors and \((\cO_a,\cO_m)\)-Tambara functors. 
\end{conjecture*}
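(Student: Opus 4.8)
The plan is to adapt the Hoyer--Mazur equivalence \cite{MazurThesis, HoyerThesis} to the bi-incomplete setting, building on the external norm functors of Conjecture~\ref{conj:NormsCompat}. The first step is to organize those functors into the relevant monoidal framework. Using the additively incomplete box product on $\cO_a$-Mackey functors (Section~\ref{sec:CatProperties}) together with the norm functors $N_K^H$, which are symmetric monoidal for the box product by Conjecture~\ref{conj:NormsCompat}, I would assemble an \emph{$\cO_m$-symmetric monoidal structure} on the $\cO_a$-Mackey functors, indexed by the $\cO_m$-admissible sets and their morphisms. An \emph{$\cO_m$-commutative monoid} is then a commutative Green functor $M$ equipped with compatible norm maps $N_K^H(i_K^\ast M) \to i_H^\ast M$ for every $\cO_m$-admissible $H/K$, subject to the usual associativity, unit, and restriction coherences. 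Establishing that these data genuinely form a coherent algebraic structure is itself part of the content, and is where the compatibility hypothesis first enters: Conjecture~\ref{conj:NormsCompat} needs compatibility to guarantee that $N_K^H$ lands in $i_H^\ast\cO_a$-Mackey functors at all.

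Next I would construct the comparison functor from $(\cO_a,\cO_m)\mhyphen\Tamb$ to $\cO_m$-commutative monoids. Given a bi-incomplete Tambara functor $\mR\colon \cP^G_{\cO_a,\cO_m}\to\Set$, restricting along the inclusion of the wide subcategory of bispans with $g=\id$ (i.e., the additively incomplete span category with transfers in $\cO_a$) produces an $\cO_a$-Mackey functor; the pieces $T_{\id}\circ N_{\id}\circ R_f$ and $T_{\id}\circ N_g\circ R_{\id}$ endow it with a commutative Green structure and with norm maps $N_g$ for each $g\in\cO_m$. Verifying that these assemble into an $\cO_m$-commutative monoid amounts to checking that the monoid coherences are among the polynomial relations already present in $\cP^G_{\cO_a,\cO_m}$, which holds because $(\cO_a,\cO_m)$ is compatible and hence $\cP^G_{\cO_a,\cO_m}$ is a genuine subcategory by Theorem~\ref{thm:Compatible}.

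The reverse direction is the crux. Given an $\cO_m$-commutative monoid $M$ in $\cO_a$-Mackey functors, I would reconstruct a bi-incomplete Tambara functor by reading its values on orbits off of $M$, taking restrictions and ($\cO_a$-)transfers from the Mackey structure, and taking the norm maps from the commutative monoid structure maps. The work is to verify that every composite in $\cP^G_{\cO_a,\cO_m}$ --- in particular every instance of Tambara reciprocity recording how a norm $N_g$ distributes over a transfer --- is forced by the monoid axioms. This reduces to identifying the distributive law: the dependent product $\Pi_g$ (Definition~\ref{def:DepProd}) computes exactly the bispan expressing $N_g$ of a transfer as a transfer of norms, and the compatibility condition $\Pi_g\big((\cO_a)_{/S}\big)\subset(\cO_a)_{/T}$ (Definition~\ref{def:Compatible}) is precisely what guarantees that the transfers appearing in this reconstruction lie in $\cO_a$, so the formula makes sense inside the $\cO_a$-Mackey structure on $M$. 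Concretely, I expect this to be organized as a proof that $\cP^G_{\cO_a,\cO_m}$ is the Lawvere theory whose models are $\cO_m$-commutative monoids in $\cO_a$-Mackey functors: the objects and the generating additive and norm morphisms match on the nose, and the only relations beyond the Mackey and commutative-monoid relations are the distributive laws supplied by $\Pi_g$.

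Finally, one checks that the two constructions are mutually inverse, which is largely bookkeeping once generators and relations are matched: starting from a Tambara functor, restricting and then reassembling returns the same norm and transfer maps, and conversely. The main obstacle throughout is this distributive law in the reverse direction --- showing that the commutative-monoid coherences, together with the single distributivity datum encoded by $\Pi_g$, generate \emph{all} of the Tambara reciprocity relations while respecting both incompleteness constraints simultaneously. In the complete case this is the heart of Hoyer--Mazur; here the added difficulty is that norms of transfers must be expressible using only the transfers that $\cO_a$ actually contains, which is exactly the content of compatibility and the reason the hypothesis cannot be dropped.
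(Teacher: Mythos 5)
The statement you are proving is stated in the paper only as Conjecture~\ref{conj:OmCommMonoids}; the paper offers no proof of it, so there is nothing to compare your argument against, and the relevant question is whether your proposal actually closes the conjecture. It does not. First, your construction of the target category depends on Conjecture~\ref{conj:NormsCompat} and on Definition~\ref{def:Norms}, i.e., on the existence of norm functors \({}_{\cO_a}N_K^H\) on \(\cO_a\)-Mackey functors that are symmetric monoidal for the incomplete box product. The paper leaves this as a conjecture as well: one must first show that coinduction along an \(\cO_m\)-admissible \(H/K\) carries maps of \(i_K^\ast\cO_a\) to maps of \(i_H^\ast\cO_a\) (a twisted analogue of Lemma~\ref{lem:ClosureUnderProducts}), and only then does the left Kan extension along the induced functor \(\cA^K_{i_K^\ast\cO_a}\to\cA^H_{i_H^\ast\cO_a}\) even exist, let alone commute with the Day convolution. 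Until that is done, the phrase ``\(\cO_m\)-commutative monoid in \(\cO_a\)-Mackey functors'' does not yet denote a well-defined category, so neither direction of your comparison functor can be constructed.

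Second, the step you yourself flag as the crux --- that the commutative-monoid coherences together with the single distributivity datum recorded by \(\Pi_g\) generate \emph{all} of the relations in \(\cP^G_{\cO_a,\cO_m}\) --- is precisely the content of the Hoyer--Mazur theorem in the complete case, and it occupies the bulk of those theses. You assert that you ``expect'' \(\cP^G_{\cO_a,\cO_m}\) to be the Lawvere theory for these monoids, but you give no argument; in particular you do not verify that every intermediate object appearing in the Hoyer--Mazur comparison (the exponential diagrams, the orbit decompositions of coinduced sets, the Tambara reciprocity expansions) stays inside both restricted indexing categories simultaneously. That is exactly where new work beyond the complete case is required, and it is why the authors state the result as a conjecture rather than a theorem. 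Your outline is a reasonable research plan that correctly identifies where compatibility must enter, but as written it is a plan with its two hardest steps left open, not a proof.
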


\subsection*{Acknowledgements}

We offer our sincere, heartfelt thanks to John Greenlees.  John
offered his generous support to both of us as young people in the
field, and we have found his beautiful mathematics deeply influential.
In addition, the myriad ways he has contributed to the homotopy theory
community has been an inspiration.

We thank Magdalena Kedziorek for her support and understanding through
this project, and we also thank the referee for an incredibly fast and detailed report.

Finally, we thank Mike Hopkins, Tyler Lawson, Mike Mandell, Peter May,
and Jonathan Rubin for many interesting and helpful conversations
about this and related mathematics. 

\section{Indexing systems, subcategories, and polynomials}

The purpose of this section is to review the basic algebraic framework
for incomplete Mackey and Tambara functors.  Throughout the section,
we will fix a finite group $G$ and denote by \(\Set^G\) the category
of finite $G$-sets.

\subsection{Indexing systems and categories}\label{ssec:Indexing}

We begin with the definition of an indexing system; this is the basic
categorical structure that organizes the possible relationships
between transfers for different finite $G$-sets.

\begin{definition}
A \defemph{symmetric monoidal coefficient system} is a functor 
\[
\mC\colon\cOrb_{G}^{op}\to\SymM
\]
from the opposite of the orbit category of $G$ to the category of symmetric monoidal categories and strong symmetric monoidal functors.
\end{definition}

\begin{example}
The fundamental example of a symmetric monoidal coefficient
system is the functor which assigns to $G/H$ the category of finite
$H$-sets, with the symmetric monoidal structure induced by disjoint
union; we denote this by $\mSet^{\amalg}$.
\end{example}

\begin{example}
    There is also a multiplicative version which assigns to $G/H$ the category of finite $H$-sets with the Cartesian symmetric monoidal structure; we will denote this by \(\mSet^{\times}\)
\end{example}

\begin{definition}\label{defn:IndexingSystem}
An {\defemph{indexing system}} is a full symmetric monoidal
sub-coefficient system $\cO$ of $\mSet^{\amalg}$ that contains all
trivial sets and is closed under 
\begin{enumerate}
\item levelwise finite limits and
\item ``self-induction'': if $H/K\in \cO(G/H)$ and $T\in\cO(G/K)$, then 
\[
H\timesover{K}T\in\cO(G/H).
\]
\end{enumerate}
\end{definition}

The collection of indexing systems for $G$ forms a poset ordered by
inclusion.  This poset has a least and a greatest element.

\begin{example}\label{exam:Otr}
The poset of indexing systems has a least element: \(\cO^{tr}\), for
which \(\cO^{tr}(G/H)\) is always the subcategory of \(\Set^H\) of
\(H\)-sets with a trivial \(H\)-action. 
\end{example}

\begin{example}
The symmetric monoidal coefficient system \(\mSet^{\amalg}\) itself is
an indexing system, \(\cO^{gen}\), and this is the maximal element in
the poset of indexing systems.
\end{example}

We will often write $\cO(H)$ for $\cO(G/H)$. 

\begin{definition}
We say that an \(H\)-set \(T\) is admissible if \(T\in\cO(H)\).
\end{definition}

Indexing systems admit an intrinsic formulation, which we can interpret
equivalently as gluing together all of \(\cO(H)\), viewed as
subcategories of slice categories of \(\Set^G\) via the equivalences
\[
    \Set^H\xrightarrow[\simeq]{G\timesover{H}(\mhyphen)}\Set^G_{/(G/H)}
\]

\begin{definition}
An {\defemph{indexing category}} is a wide, pullback stable, finite
coproduct complete subcategory \(\cO\) of the category of finite
\(G\)-sets. 
\end{definition}

Indexing categories also form a poset under inclusion, and pullback stability and
finite coproduct completeness guarantee that the assignment 
\[
G/H\mapsto \cO_{/(G/H)}
\]
gives us a map of posets from the poset of indexing categories to the poset of
indexing systems.  

\begin{theorem}[{\cite[Theorem 1.4]{BHOTamb}}]
The map from the poset of indexing categories to that of indexing
systems is an isomorphism. 
\end{theorem}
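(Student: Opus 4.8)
The plan is to produce an explicit two-sided inverse to the forward map $\Psi\colon\cO\mapsto\big(G/H\mapsto\cO_{/(G/H)}\big)$ and to check that this inverse is again order-preserving; since $\Psi$ is already a map of posets, exhibiting an order-preserving two-sided inverse is exactly what is needed to conclude it is an isomorphism of posets. To build the candidate inverse $\Phi$, I would send an indexing system $\cO$ to the wide subgraph $\Phi(\cO)$ of $\Set^G$ whose morphisms $f\colon S\to T$ are precisely those for which, over every orbit $G/H\subseteq T$ and every point $t$ with $\Stab_G(t)=H$, the fiber $f^{-1}(t)$ — which is naturally an $H$-set — lies in $\cO(H)$. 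The first thing to verify is that this condition is independent of the chosen representative $t$: a second representative $t'=\gamma t$ has stabilizer $\gamma H\gamma^{-1}$ and fiber $\gamma\,f^{-1}(t)$, and the conjugation isomorphism $G/H\to G/(\gamma H\gamma^{-1})$ in $\cOrb_G$ induces an equivalence $\cO(\gamma H\gamma^{-1})\simeq\cO(H)$ carrying one fiber to the other, so the membership condition is well defined.

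Next I would record a small closure lemma that is used repeatedly: every orbit of an admissible set is admissible. Indeed, if $A\in\cO(H)$ and $B\subseteq A$ is a sub-$H$-set, then $B$ is the pullback of the characteristic map $A\to\ast\amalg\ast$ along the inclusion of one summand $\ast\hookrightarrow\ast\amalg\ast$; since $\cO(H)$ contains the trivial sets $\ast$ and $\ast\amalg\ast$ and is closed under finite limits, $B\in\cO(H)$. With this in hand I would check that $\Phi(\cO)$ is an indexing category. Widening and the presence of identities are immediate, because the fibers of an identity map are singletons and $\cO$ contains the trivial sets. Finite coproduct completeness is inherited orbitwise, as the orbits and fibers of $\coprod_i f_i$ are simply those of the individual $f_i$. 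Pullback stability follows from the coefficient-system structure: for $f\in\Phi(\cO)$ and any $p\colon T'\to T$, a fiber of the base change over $t'\in T'$ equals the fiber of $f$ over $p(t')$, and since $\Stab_G(t')\subseteq\Stab_G(p(t'))$ this is the restriction of an admissible set along a subgroup inclusion, hence admissible.

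The crucial step, and the one I expect to be the main obstacle, is closure under composition, which is exactly where the self-induction axiom enters. Given $f\colon S\to T$ and $g\colon T\to U$ in $\Phi(\cO)$, I would reduce to $U=G/H$ and analyze the fiber $(g\circ f)^{-1}(eH)=f^{-1}\big(g^{-1}(eH)\big)$. Since $g\in\Phi(\cO)$, the $H$-set $g^{-1}(eH)$ is admissible, and by the orbit lemma it decomposes as $\coprod_j H/K_j$ with each $H/K_j\in\cO(H)$ and each $K_j\subseteq H$. Choosing $t_j$ with $\Stab_G(t_j)=K_j$, the fiber $f^{-1}(t_j)$ is an admissible $K_j$-set because $f\in\Phi(\cO)$, and restricting $f$ over the $H$-orbit of $t_j$ identifies $f^{-1}(H\cdot t_j)$ with the induced $H$-set $H\timesover{K_j}f^{-1}(t_j)$. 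Applying self-induction to $H/K_j\in\cO(H)$ and $f^{-1}(t_j)\in\cO(K_j)$ gives $H\timesover{K_j}f^{-1}(t_j)\in\cO(H)$, and a finite coproduct over $j$ shows $(g\circ f)^{-1}(eH)\in\cO(H)$, so $g\circ f\in\Phi(\cO)$.

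It then remains to check that $\Psi$ and $\Phi$ are mutually inverse and that $\Phi$ preserves order. For $\Psi\Phi=\mathrm{id}$, an $H$-set $A$ corresponds under the equivalence $\Set^H\simeq\Set^G_{/(G/H)}$ to $G\timesover{H}A\to G/H$, whose fiber over $eH$ is $A$ itself; thus $A\in(\Psi\Phi\,\cO)(H)$ if and only if $A\in\cO(H)$. For $\Phi\Psi=\mathrm{id}$, the inclusion $\cO\subseteq\Phi\Psi\,\cO$ uses pullback stability, since restricting $f\in\cO$ over each orbit of $T$ is a base change along $G/H\hookrightarrow T$ and so stays in $\cO$, while the reverse inclusion uses finite coproduct completeness, since any $f\in\Phi\Psi\,\cO$ is the coproduct of its orbitwise restrictions, each of which lies in $\cO$. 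Finally, $\Phi$ is manifestly order-preserving, as enlarging the collections $\cO(H)$ of admissible sets can only enlarge the collection of morphisms satisfying the fiberwise condition. Hence $\Phi$ is an order-preserving two-sided inverse of $\Psi$, and the forward map is an isomorphism of posets.
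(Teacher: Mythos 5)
The paper itself gives no proof of this statement---it is imported from \cite{BHOTamb}---but your argument is correct and is essentially the construction used there: the inverse sends an indexing system to the wide subgraph of fiberwise-admissible maps, the ``orbits of admissibles are admissible'' lemma is obtained by pulling back a characteristic map to $\ast\amalg\ast$ using closure under finite limits, pullback stability comes from the coefficient-system restriction functors, and closure under composition is exactly the point where self-induction enters. All the delicate points (conjugation-independence of the fiber condition, the identification $f^{-1}(H\cdot t_j)\cong H\timesover{K_j}f^{-1}(t_j)$, and the use of pullback stability and coproduct completeness to get $\Phi\Psi=\mathrm{id}$) are handled correctly, so I see no gaps.
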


Because of this isomorphism, we will engage in mild abuse of notation
and use the same symbols to denote both indexing categories and the
associated indexing systems.

\subsection{Polynomials}\label{ssec:Polynomials}

The point of indexing categories, as opposed to indexing systems, is
that this reformulation provides a convenient formalism for 
parameterizing norms for incomplete Tambara functors~\cite{BHOTamb}
and for incomplete Mackey functors~\cite{BHOSpectra}.  
Specifically, we employ a categorification of the notion of
polynomials (also called ``bispans'' \cite{Strickland}).  Although this
definition works in the context of locally cartesian closed categories 
(by work of Gambino--Kock \cite{GamKock} and Weber \cite{Weber}), we
focus here for concreteness on \(\Set^G\).

\begin{definition}\label{def:Poly}
The category $\aP^{G}$ of {\em polynomials} has objects finite $G$-sets and
morphisms between objects $S$ and $T$ the isomorphism classes of
polynomials 
\[
T_h\circ N_g\circ R_f:= S\xleftarrow{f}U_1\xrightarrow{g} U_2\xrightarrow{h}T
\]
where an isomorphism of polynomials is specified by isomorphisms $U_1 \to U_1'$ and $U_2 \to U_2'$ that make the evident diagrams commute.  
\end{definition}

The composition in the category is a little elaborate; we spend the rest of this subsection unpacking and explaining it.

\begin{remark}
    The category $\aP^{G}$ is obtained from a $2$-category where the category of morphisms is the category with objects polynomials and morphisms maps of polynomials as in the definition, where the internally described square is a pullback square.
\end{remark}

The category of polynomials can be presented in terms of generators
and relations, which is often technically convenient.  Specifically,
as indicated, \(T_h\circ N_g\circ R_f\) is a composite of basic maps:

\begin{definition}
Let $f \colon S \to T$ be a map of finite $G$-sets.  Then we define the
following morphisms
\begin{align*}
R_f &= T \xleftarrow{f} S \xrightarrow{\id}  S \xrightarrow{\id}  S \\
N_f &= S \xleftarrow{\id} S \xrightarrow{f}  T \xrightarrow{\id}  T \\
T_f &= S \xleftarrow{\id} S \xrightarrow{\id} S \xrightarrow{f} T.
\end{align*}
\end{definition}

These are stand-ins for the three basic ways we might build the
polynomials on a sequence of symbols: 
\begin{enumerate}
\item[R] Repeat or drop some collection of the variables, then
\item[N] multiply collections of them (with ``N'' for the Galois theoretic norm), and
\item[T] sum up the result (with ``T'' for the transfer or trace).
\end{enumerate}
Heuristically, the map \(N\) should be thought of as ``multiply
together the fibers over a point'' and \(T\) as ``sum together the
fibers''.  

Composition in this category is most easily expressed by showing how
to transform an arbitrary string of composable combinations of
\(T\), \(N\), and \(R\)s into one of the form in Definition~\ref{def:Poly}.

First, we consider composing \(T\)s with \(T\)s, etc.  In terms of our
heuristic, we can duplicate, multiply, or add things either in stages
or all at once.

\begin{proposition}
For any composable maps \(f\colon S\to T\) and \(g\colon T\to U\), we
have 
\begin{align*}
R_{g\circ f}&=R_f\circ R_g\\
T_{g\circ f}&=T_g\circ T_f\\
N_{g\circ f}&=N_g\circ N_f.
\end{align*}
\end{proposition}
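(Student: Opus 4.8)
The plan is to prove each of the three identities by unpacking the definitions of $R$, $N$, and $T$ as polynomials and then composing them using the rules of the polynomial category. Since these three families of morphisms are each defined by placing a single map in one of the three "slots" (the $f$, $g$, or $h$ position) and identities elsewhere, the compositions should be especially simple: no genuine pullback or distributivity interaction occurs, so the verification reduces to tracking which slot the maps land in.

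First I would handle $R_{g\circ f} = R_f\circ R_g$. Recall $R_f = (T\xleftarrow{f}S\xrightarrow{\id}S\xrightarrow{\id}S)$, so the $R$-maps are concentrated entirely in the leftmost (restriction) slot, with identities in the $N$ and $T$ slots. Composing two such polynomials only involves composing the backward-pointing maps, and since restriction is contravariant in $\Set^G$, composing $R_g$ (which restricts along $g\colon T\to U$) after $R_f$ should yield restriction along $g\circ f$. The order reversal $R_{g\circ f}=R_f\circ R_g$ is exactly the contravariance, matching the first proposition in the text that restriction is functorial. For $T_{g\circ f}=T_g\circ T_f$, the maps live in the rightmost (transfer) slot with identities elsewhere; transfer is covariant, so the composite transfers along $g$ after $f$, giving $g\circ f$ with no order reversal. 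The case $N_{g\circ f}=N_g\circ N_f$ is identical in form, with the maps in the middle (norm) slot, again covariant.

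The key mechanical step is to confirm that when the relevant slot is the only nontrivial one, the composition rule of $\aP^G$ degenerates to ordinary composition of maps in $\Set^G$. I would verify this by writing out the general composition recipe (the "elaborate" procedure the text promises to unpack) restricted to polynomials of the form $R_f$, $N_g$, $T_h$ individually, and observing that the intervening pullbacks and dependent products reduce to identities because one is always composing against $\id$-maps. Concretely, for the $T$-case the composite $T_g\circ T_f$ presents as $S\xleftarrow{\id}S\xrightarrow{\id}S\xrightarrow{f}T$ followed by $T\xleftarrow{\id}T\xrightarrow{\id}T\xrightarrow{g}U$, and stitching these in the bispan category collapses to $S\xleftarrow{\id}S\xrightarrow{\id}S\xrightarrow{g\circ f}U = T_{g\circ f}$; the $N$ and $R$ cases are strictly analogous.

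The main obstacle is purely bookkeeping: I expect the only real content to be establishing the precise composition formula in $\aP^G$ for these degenerate polynomials, which requires the detailed composition recipe that the excerpt has not yet spelled out. Once that recipe is in hand, all three identities follow by inspection, with the restriction identity carrying the order-reversal forced by contravariance and the norm and transfer identities preserving order because they sit in covariant slots. No distributivity or Tambara reciprocity phenomena arise here precisely because each polynomial is supported in a single slot, so these identities are the "easy" functoriality relations underpinning the more delicate mixed-composition rules to follow.
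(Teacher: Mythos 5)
Your proposal is correct and matches the intended verification: the paper states this proposition without proof as part of its review of the standard theory of polynomials (following Tambara and Strickland), and the argument it implicitly relies on is exactly yours --- when only one of the three slots is nontrivial, the pullbacks and dependent products in the general composition recipe are taken along identities and collapse, so composition in $\cP^G$ reduces to composition in $\Set^G$, contravariantly for $R$ and covariantly for $N$ and $T$.
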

\noindent In other words, \(T\) and  \(N\) extend to covariant functors \(\Set^G\to \cP^G\), while \(R\) extends to a contravariant one.

Next, we encode the heuristic that adding or multiplying and then
duplicating is the same as first duplicating and then adding or
multiplying coordinatewise.  This is expressed in terms of pullbacks,
as follows.

\begin{proposition}
Given a pullback diagram
\[
\begin{tikzcd}
{T'} \ar[r, "{g'}"] \ar[d, "{f'}"'] & {T}\ar[d, "f"]  \\
{S'} \ar[r, "g"'] & {S,}
\end{tikzcd}
\]
we have identities 
\[
R_f\circ T_g=T_{g'}\circ R_{f'}\text{ and }R_f\circ N_g=N_{g'}\circ 
R_{f'}. 
\]
\end{proposition}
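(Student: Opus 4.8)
The plan is to pass to the standard interpretation of polynomials as polynomial functors on the slice categories $\Set^G_{/(-)}$, under which both identities become the Beck--Chevalley base-change isomorphisms attached to the given pullback square. Recall (Gambino--Kock \cite{GamKock}, Weber \cite{Weber}) that since $\Set^G$ is locally cartesian closed, a polynomial $S\xleftarrow{a}A\xrightarrow{b}B\xrightarrow{c}T$ represents the functor $c_!\,b_*\,a^*\colon \Set^G_{/S}\to\Set^G_{/T}$, where $a^*$ is pullback, $b_*$ is the dependent product (the functor $\Pi_b$ of Definition~\ref{def:DepProd}), and $c_!$ is the dependent sum given by postcomposition; composition of polynomials corresponds to composition of these functors, and an isomorphism of polynomials induces a natural isomorphism of the associated functors. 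Under this dictionary the generators translate as $R_f\leftrightarrow f^*$, $N_g\leftrightarrow g_*$, and $T_g\leftrightarrow g_!$.

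First I would record the two right-hand sides as honest polynomials. Composing the generators gives
\[
T_{g'}\circ R_{f'}=\big(S'\xleftarrow{f'}T'\xrightarrow{\id}T'\xrightarrow{g'}T\big),\qquad N_{g'}\circ R_{f'}=\big(S'\xleftarrow{f'}T'\xrightarrow{g'}T\xrightarrow{\id}T\big),
\]
whose associated functors are $g'_!\,f'^*$ and $g'_*\,f'^*$ respectively, while $R_f\circ T_g$ and $R_f\circ N_g$ have associated functors $f^*\,g_!$ and $f^*\,g_*$. Thus the two claimed identities amount to the natural isomorphisms $f^*\,g_!\cong g'_!\,f'^*$ and $f^*\,g_*\cong g'_*\,f'^*$, and the remaining task is to check that the canonical such isomorphisms are realized by genuine isomorphisms of polynomials, i.e.\ by isomorphisms of the middle $G$-sets compatible with all the structure maps, so that the equalities hold as morphisms of $\cP^G$.

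For the transfer identity this is elementary: given $X\to S'$, both $f^*g_!X$ and $g'_!f'^*X$ are computed by the iterated pullback $T\timesover{S}X=T'\timesover{S'}X$, and the comparison isomorphism is the one supplied by the universal property of $T'=T\timesover{S}S'$ together with the pasting lemma for pullback squares. The identification of middle objects is then forced, and $R_f\circ T_g=T_{g'}\circ R_{f'}$; this is really just the statement that pullback commutes with postcomposition.

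The main work is the norm identity $f^*g_*\cong g'_*f'^*$, the Beck--Chevalley condition for the right adjoint, which is the genuinely nontrivial interchange. Two routes are available. Abstractly, one constructs the comparison map as the mate of the equality $g'^*f^*=f'^*g^*$ (valid since $fg'=gf'$) under the adjunctions $g^*\dashv g_*$ and $g'^*\dashv g'_*$, and invokes local cartesian closedness of $\Set^G$ to see that it is invertible. Concretely---and this is what I would actually write, since $\Set^G$ is so explicit---one uses the fiberwise description of the dependent product: for $X\to S'$ and $t\in T$ lying over $s=f(t)$, the fiber of $f^*g_*X$ over $t$ is $\prod_{s'\in g^{-1}(s)}X_{s'}$, whereas the fiber of $g'_*f'^*X$ over $t$ is $\prod_{t'\in g'^{-1}(t)}X_{f'(t')}$, and the pullback bijection $g'^{-1}(t)\xrightarrow{\cong}g^{-1}(f(t))$, $t'\mapsto f'(t')$, matches the two products term by term. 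I expect the only real obstacle to be bookkeeping: verifying that this fiberwise matching is $G$-equivariant and natural in $X$, and that the resulting natural isomorphism is the canonical polynomial isomorphism, so that the identity genuinely holds in $\cP^G$ rather than merely up to non-canonical isomorphism. Once that is confirmed we obtain $R_f\circ N_g=N_{g'}\circ R_{f'}$, completing the proof.
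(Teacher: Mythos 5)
The paper does not prove this proposition --- it is recalled without proof in the review of the polynomial category, with the reader referred to Tambara and Strickland --- so there is no argument of the authors' to compare against. Your argument is correct and is essentially the standard one from those sources: pass to the polynomial-functor interpretation $c_!\,b_*\,a^*$, observe that the two identities are the Beck--Chevalley base-change isomorphisms for the dependent sum and dependent product respectively, and check on fibers that the canonical comparison maps are isomorphisms (the transfer case being the pasting lemma for pullbacks, the norm case being the term-by-term matching of $\prod_{s'\in g^{-1}(f(t))}X_{s'}$ with $\prod_{t'\in g'^{-1}(t)}X_{f'(t')}$ via the bijection $g'^{-1}(t)\cong g^{-1}(f(t))$). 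The one point you correctly flag but should make sure to discharge explicitly is the passage back from a natural isomorphism of polynomial functors to an equality of morphisms in $\cP^G$: this requires knowing that isomorphism classes of polynomials inject into natural isomorphism classes of polynomial functors with composition of polynomials matching composition of functors, which is the Gambino--Kock biequivalence you cite. The alternative, more elementary route (the one Strickland takes) is to compute both composites directly from the explicit composition recipe for bispans and exhibit the isomorphism of the middle $G$-sets by hand; that avoids invoking the functor correspondence at the cost of more bookkeeping, but both routes are complete.
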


The most complicated interchange is swapping \(T\) and \(N\), which is
a form of generalized distributivity. This uses the dependent product, which is heuristically the ``product over the fibers''.

\begin{definition}\label{def:DepProd}
If \(g\colon S\to T\) is a map of finite \(G\)-sets, the
{\defemph{dependent product along \(g\)}} is the functor 
\[
\Pi_g\colon\Set^G_{/S}\to\Set^G_{/T}
\]
that is the right adjoint to the pullback along \(g\).
\end{definition}

Recall that an exponential diagram is a
diagram isomorphic to one of the form 
\[
    \begin{tikzcd}
        {T}
            \ar[d, "g"']
            &
        {S}
            \ar[l, "h"']
            &
        {T\timesover{U}\Pi_g(S)}
            \ar[l, "{f'}"']
            \ar[d, "{g'}"]
            \\
        {U}
            &
            &
        {\Pi_g(S),}
            \ar[ll, "{h'}"]
    \end{tikzcd}
\]
where 
\begin{enumerate}
\item \(h'\) is dependent product of \(h\) along \(g\), 
\item \(g'\) is the pullback of \(g\) along \(h'\), and 
\item where \(f'\) is the counit of the pullback-dependent product
  adjunction.
\end{enumerate}  
Notice that only the maps \(h\) and \(g\) can vary freely; all of the
other pieces are pseudo-functorially determined.

\begin{proposition}
    If we have an exponential diagram
    \[
    \begin{tikzcd}
        {T}
            \ar[d, "g"']
            &
        {S}
            \ar[l, "h"']
            &
        {T\timesover{U}\Pi_g(S)}
            \ar[l, "{f'}"']
            \ar[d, "{g'}"]
            \\
        {U}
            &
            &
        {\Pi_g(S),}
            \ar[ll, "{h'}"]
    \end{tikzcd}
    \]
    then
    \[
    N_g\circ T_h=T_{h'}\circ N_{g'}\circ R_{f'}.
    \]
\end{proposition}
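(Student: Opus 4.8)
The plan is to verify the identity by interpreting each generating morphism through its action on slice categories and then reducing to the classical distributive law for polynomial functors. Recall that a polynomial $S\xleftarrow{f}U_1\xrightarrow{k}U_2\xrightarrow{\ell}T$ represents the functor $\Set^G_{/S}\to\Set^G_{/T}$ given by $\ell_!\circ\Pi_k\circ f^\ast$, where $f^\ast$ is pullback, $\Pi_k$ is the dependent product of Definition~\ref{def:DepProd}, and $\ell_!$ is the dependent sum (postcomposition). The composition in $\cP^G$ is set up precisely so that sending $S\mapsto\Set^G_{/S}$ and $R_f\mapsto f^\ast$, $N_f\mapsto\Pi_f$, $T_f\mapsto f_!$ is (pseudo)functorial; this is the content of Gambino--Kock and Weber \cite{GamKock, Weber}. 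Under this dictionary the left-hand side $N_g\circ T_h$ becomes $\Pi_g\circ h_!$ and the right-hand side $T_{h'}\circ N_{g'}\circ R_{f'}$ becomes $h'_!\circ\Pi_{g'}\circ (f')^\ast$, so it suffices to produce a natural isomorphism
\[
\Pi_g\circ h_!\;\cong\;h'_!\circ\Pi_{g'}\circ (f')^\ast
\]
compatible with the underlying bispan data.

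First I would record the fiberwise meaning of every piece of the exponential diagram. For $u\in U$ write $T_u=g^{-1}(u)$; then $h'=\Pi_g(h)$ has fiber over $u$ the set of sections of $h$ over $T_u$, the fiber product $W:=T\timesover{U}\Pi_g(S)$ is $\{(t,\sigma):\sigma\text{ a section over }T_{g(t)}\}$, the map $g'$ is the projection $(t,\sigma)\mapsto\sigma$, and the counit $f'$ is evaluation $(t,\sigma)\mapsto\sigma(t)$. With these in hand, I would apply both functors to an object $p\colon X\to S$ and compute fiber-by-fiber over $u$. The left-hand side produces the families $(x_t)_{t\in T_u}$ with $h(p(x_t))=t$, while the right-hand side produces pairs consisting of a section $\sigma$ together with a family $(y_t)_{t\in T_u}$ with $y_t$ lying over $\sigma(t)$. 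The assignment $(x_t)\mapsto(\sigma,(y_t))$ with $\sigma(t)=p(x_t)$ and $y_t=x_t$ is then the desired bijection, manifestly natural in $X$ and $G$-equivariant.

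The remaining step, and the main obstacle, is to pass from an isomorphism of represented functors back to an \emph{equality of morphisms} in $\cP^G$, whose morphisms are isomorphism classes of bispans. I would either (i) appeal to the fact that a polynomial is determined up to isomorphism by its represented functor, noting that the natural isomorphism above is cartesian and hence induced by an isomorphism of the underlying bispans; or (ii) avoid the functor-level argument and instead compute the composite bispan $N_g\circ T_h$ directly from the composition rule, checking that the pullbacks and dependent products it manufactures reproduce exactly $S\xleftarrow{f'}W\xrightarrow{g'}\Pi_g(S)\xrightarrow{h'}U$. Approach (ii) is more self-contained but is where the real work lies: one must track the universal property of $\Pi_g$ carefully enough to identify $f'$ with the counit (evaluation) and $g'$ with the pullback of $g$ along $h'$, i.e.\ to match the abstract normal form of the composite with the exponential diagram. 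Finally I would observe that the whole construction is natural in $g$ and $h$, consistent with the remark that only these two maps vary freely while the rest of the diagram is pseudo-functorially determined.
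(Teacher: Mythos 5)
Your fiberwise computation is correct, and the dictionary you set up (bispans act on slice categories via $\ell_!\circ\Pi_k\circ f^\ast$, with $R\mapsto$ pullback, $N\mapsto$ dependent product, $T\mapsto$ dependent sum) is the standard Gambino--Kock/Weber semantics. But note that the paper offers no proof of this proposition at all: in its presentation, composition in \(\cP^G\) is \emph{defined} by the normal-form rewriting procedure, and the exponential-diagram identity \(N_g\circ T_h=T_{h'}\circ N_{g'}\circ R_{f'}\) is one of the defining relations, recalled from Tambara and Strickland. So the two approaches differ in what they take as given. Your route buys a genuine explanation of \emph{why} this is the right relation --- the natural bijection between sections \((x_t)_{t\in T_u}\) of \(h\circ p\) and pairs \((\sigma,(y_t))\) with \(\sigma(t)=p(x_t)\) is exactly the "product of a sum" formula --- and it is the right way to see that the rewriting rules are consistent. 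What you correctly flag as the remaining obstacle is, however, precisely where the content of Tambara's and Strickland's verification lives: either one invokes the theorem that the assignment from bispans to polynomial functors is a (bi)equivalence onto cartesian natural transformations, so that the functor-level isomorphism descends to an equality of isomorphism classes of bispans, or one computes the composite bispan directly from an explicit composition formula and matches it against the exponential diagram. Your sketch names both options but carries out neither, so as written it is an outline of the standard argument rather than a complete proof; within the paper's own framework the more relevant task would be well-definedness and associativity of the composition law, which your semantic functor handles cleanly once the representation theorem is in hand.
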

Put another way, the exponential diagram records concisely how to
express a general formula for the product of a sum.  

Finally, we record an unexpected categorical result that arises from
the asymmetry in the roles of \(R\), \(S\), and \(T\). 

\begin{proposition}
For any objects \(S\) and \(T\), the maps
\[
\pi_S=[S\amalg T\leftarrow S\xrightarrow{=} S\xrightarrow{=} S]\text{
  and } 
\pi_T=[S\amalg T\leftarrow T\xrightarrow{=} T\xrightarrow{=} T]
\]
are the projection maps witnessing \(S\amalg T\) as the categorical
product in \(\cP^G\). 
\end{proposition}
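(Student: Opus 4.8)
The plan is to verify the universal property of the product directly, in the equivalent form of exhibiting, for every object $X$, a bijection
\[
\Hom_{\cP^G}(X,S\amalg T)\xrightarrow{\ \cong\ }\Hom_{\cP^G}(X,S)\times\Hom_{\cP^G}(X,T),\qquad \Phi\mapsto(\pi_S\circ\Phi,\ \pi_T\circ\Phi).
\]
Because both components of this map are given by post-composition with the fixed morphisms $\pi_S$ and $\pi_T$, naturality in $X$ is automatic; thus showing this map is a bijection for all $X$ is exactly the statement that $(S\amalg T,\pi_S,\pi_T)$ is the product. The whole argument therefore reduces to understanding how a general polynomial composes with a projection, and then using that $S\amalg T$ is a coproduct to reassemble the data.

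First I would compute $\pi_S\circ\Phi$ for a general morphism $\Phi\colon X\to S\amalg T$, represented by $X\xleftarrow{f}U_1\xrightarrow{g}U_2\xrightarrow{h}S\amalg T$, so that $\Phi=T_h\circ N_g\circ R_f$ and $\pi_S=R_{\iota_S}$ for the summand inclusion $\iota_S\colon S\to S\amalg T$. Since $\iota_S$ is the inclusion of a summand, pulling any map back along $\iota_S$ is simply passing to its preimage over $S$. Applying the interchange relation $R_{\iota_S}\circ T_h=T_{h|_V}\circ R_{j}$ for the pullback of $h$ along $\iota_S$, where $V:=h^{-1}(S)\subseteq U_2$, $j\colon V\hookrightarrow U_2$ is the inclusion and $h|_V\colon V\to S$; then the relation $R_{j}\circ N_g=N_{g|_W}\circ R_{j'}$ for the pullback of $g$ along $j$, where $W:=g^{-1}(V)\subseteq U_1$, $j'\colon W\hookrightarrow U_1$ and $g|_W\colon W\to V$; and finally $R_{j'}\circ R_f=R_{f|_W}$. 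Collecting these, I obtain
\[
\pi_S\circ\Phi \;=\; \big[\,X\xleftarrow{f|_W}W\xrightarrow{g|_W}V\xrightarrow{h|_V}S\,\big],
\]
that is, $\pi_S\circ\Phi$ is exactly the sub-polynomial of $\Phi$ supported over $S$, with $V=h^{-1}(S)$ and $W=(h\circ g)^{-1}(S)$. The identical computation gives $\pi_T\circ\Phi$ as the part supported over $T$.

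The decisive structural input is then that $S\amalg T$ is a coproduct: the map $h$ canonically splits $U_2=h^{-1}(S)\amalg h^{-1}(T)$, and likewise $h\circ g$ splits $U_1=(h\circ g)^{-1}(S)\amalg (h\circ g)^{-1}(T)$, compatibly with $f$, $g$, and $h$. Hence every $\Phi$ is canonically the disjoint union of its two components $\pi_S\circ\Phi$ and $\pi_T\circ\Phi$, and this reassembly is inverse to the map above. Concretely, given $\phi=[\,X\leftarrow W_0\to V_0\to S\,]$ and $\psi=[\,X\leftarrow W_1\to V_1\to T\,]$, I would define the pairing $\langle\phi,\psi\rangle$ by taking the disjoint unions $U_1=W_0\amalg W_1$ and $U_2=V_0\amalg V_1$, the maps $f$ and $g$ as $f_\phi\amalg f_\psi$ and $g_\phi\amalg g_\psi$, and $h\colon V_0\amalg V_1\to S\amalg T$ the evident map assembled from $h_\phi$ and $h_\psi$; the computation of the previous paragraph then returns $\pi_S\circ\langle\phi,\psi\rangle=\phi$ and $\pi_T\circ\langle\phi,\psi\rangle=\psi$, giving existence, while the canonical splitting shows any $\Phi$ with these projections is isomorphic as a polynomial to $\langle\phi,\psi\rangle$, giving uniqueness.

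The main obstacle I anticipate is purely the bookkeeping in the first step: correctly identifying the pullbacks appearing in the interchange relations and checking that restricting a norm or transfer to a summand of its target yields the norm or transfer of the restricted map. This is where one uses that $\iota_S$ is a monomorphism onto a summand, so that every pullback in sight is literally a preimage and no genuinely new data is created; once this is pinned down, the coproduct decomposition makes existence and uniqueness of the factorization immediate, and well-definedness on isomorphism classes of polynomials is routine.
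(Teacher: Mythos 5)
Your argument is correct. The paper itself gives no proof of this proposition (it is recalled from Tambara and Strickland as part of the review of \(\cP^G\)), and your verification is exactly the standard one: identify \(\pi_S\) with \(R_{\iota_S}\), push it through \(T_h\) and \(N_g\) via the pullback interchange relations to see that \(\pi_S\circ\Phi\) is the restriction of \(\Phi\) over the summand \(S\), and then use the canonical splittings \(U_2=h^{-1}(S)\amalg h^{-1}(T)\) and \(U_1=(h\circ g)^{-1}(S)\amalg(h\circ g)^{-1}(T)\) to reassemble \(\Phi\) from its two components, giving both existence and uniqueness of the pairing.
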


In other words, regarding \(R\) as a functor
\[
R\colon \Set^{G,op}\to\cP^G,
\]
it is product preserving.

\subsection{Incomplete Tambara functors}

A natural question to ask is when the subset of polynomials with maps
in a restricted subcategory of finite $G$-sets itself forms a
category.  The key observation is that this occurs precisely when the
subcategory in question is an indexing category~\cite[2.10]{BHOTamb}.

\begin{theorem}
Let $\cO$ be an indexing category.  The category $\aP_\cO^{G}$ of {\em
  polynomials with exponents in $\cO$} has objects finite $G$-sets and
morphisms between objects $S$ and $T$ the isomorphism classes of bispans 
\[
S\xleftarrow{f}U_1\xrightarrow{g} U_2\xrightarrow{h}T,
\]
where $g$ is an arrow in $\cO$. Disjoint union of finite \(G\)-sets is the categorical product.
\end{theorem}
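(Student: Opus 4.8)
The plan is to prove two things: that $\aP^G_\cO$ is closed under composition and contains the identities, so that it is a genuine (wide) subcategory of $\aP^G$, and that the projections exhibiting $S\amalg T$ as a product in $\aP^G$ continue to do so inside $\aP^G_\cO$. The only machinery I need is the explicit description of composition in $\aP^G$ recorded in the interchange propositions above: these let me rewrite any composite of bispans into the normal form $T_h\circ N_g\circ R_f$ while tracking precisely what becomes of the ``middle'' norm map $g$.

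For the subcategory statement, the identity bispan on $S$ has middle map $\id_S$, which lies in $\cO$ because $\cO$ is wide. For closure under composition, I would begin with a composite
\[
(T_{h_2}\circ N_{g_2}\circ R_{f_2})\circ(T_{h_1}\circ N_{g_1}\circ R_{f_1})
\]
of two bispans with $g_1,g_2\in\cO$, and push all the $R$'s to the right and all the $T$'s to the left using the three interchange laws. Moving an $R$ past an $N$ via $R_f\circ N_g=N_{g'}\circ R_{f'}$ replaces the norm map by a pullback of itself; the distributivity law $N_{g_2}\circ T_{h}=T_{h'}\circ N_{g'}\circ R_{f'}$, applied to $N_{g_2}$ and the transfer produced by first moving $R_{f_2}$ past $T_{h_1}$, introduces a new norm map $g'$ which, by the exponential diagram, is the pullback of $g_2$ along the dependent product $h'=\Pi_{g_2}(h)$; and finally adjacent norms merge via $N_a\circ N_b=N_{a\circ b}$. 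The upshot is that the norm map of the reduced normal form is built from $g_1$ and $g_2$ purely by pullbacks followed by a single composition. Since $\cO$ is pullback stable and, being a subcategory, closed under composition, this map lies in $\cO$, so the composite is again in $\aP^G_\cO$.

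For the product statement, the projections $\pi_S,\pi_T$ have identity middle maps and hence lie in $\aP^G_\cO$. Given a cone of bispans $P\to S$ and $P\to T$ in $\aP^G_\cO$, the mediating map $P\to S\amalg T$ supplied by the product proposition in $\aP^G$ is formed by taking the levelwise disjoint union of the two bispans, so its middle map is $g_1\amalg g_2$; this lies in $\cO$ because $\cO$ is closed under finite coproducts of morphisms. Uniqueness is then immediate: the inclusion $\aP^G_\cO\hookrightarrow\aP^G$ is faithful and the projections already live in $\aP^G_\cO$, so the factorization that is unique in $\aP^G$ is a fortiori the unique one in $\aP^G_\cO$.

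The step I expect to be the crux is the distributivity interchange. The dependent product $\Pi_{g_2}(h)$ that appears there is in general \emph{not} a morphism of $\cO$, so one might worry that the rewriting escapes $\cO$; the point to make carefully is that $\Pi_{g_2}(h)$ only ever appears as a transfer $T_{h'}$, whereas the norm it produces is $g'$, which the exponential diagram identifies as a pullback of $g_2$. Verifying this identification, and thereby confirming that only pullback stability (and no closure under dependent products) is demanded on the norm side, is the heart of the argument.
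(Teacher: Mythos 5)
Your proposal is correct and follows essentially the same route as the paper: verify the three interchange laws, observing that the norm leg of a composite is produced only by pullbacks and composition (so pullback stability and being a subcategory suffice on the norm side, while the dependent product appears only in the unrestricted transfer leg), and use wideness for identities and closure under coproducts of morphisms for the mediating map of the product. This is exactly the specialization to unrestricted transfers of the argument the paper gives for Theorem~\ref{thm:Compatible}.
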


With this definition in hand, we can define incomplete Tambara
functors as follows.

\begin{definition}[{\cite[Definition 4.1]{BHOTamb}}]
Let $\cO$ be an indexing category.  An {\em $\cO$-semi-Tambara functor} is
a product-preserving functor
\[
\aP_\cO^{G} \to \Set.
\]
An {\em $\cO$-Tambara functor} is an $\cO$-semi-Tambara functor which is
valued in abelian groups.
\end{definition}

\subsection{Incomplete Mackey functors}

Tambara's category of polynomials is the multiplicative generalization
of Lindner's category of spans as a way to record ``linear
functions'' \cite{Lindner}.  When we are interested in just the additive structure,
we can choose the map $g$ to be the identity and then we recover the
Mackey version of the incomplete polynomials from~\cite{BHOSpectra}.  

\begin{definition}[{\cite[Definition 2.23]{BHOSpectra}}]
Let \(\cO\) be an indexing category.  Then \(\cA^G_{\cO}\) is the
category with objects finite \(G\)-sets and morphisms isomorphism
classes of spans 
\[
\cA^{G}_{\cO}(S,T)=\Big\{[S\leftarrow U\xrightarrow{h} T]\mid h\in
   {\cO}\Big\}, 
\]
with composition given by pullback.
\end{definition}

\begin{remark}
    Once again, this is the quotient category associated to a $2$-category in which we keep track of the coherence of iterated pullbacks.
\end{remark}

\begin{example}\label{exam:BurnsideCat}
    When \(\cO=\Set^G\), the category \(\cA^G_{\cO}\) is the usual Lindner category of spans \(\cA^G\).
\end{example}

In the case of \(\cO=\Set^G\), this category is isomorphic to the
subcategory of polynomials where the norm maps are only along
isomorphisms, and this showed how to extract the underlying additive
Mackey functor of an incomplete Tambara functor.

The disjoint union is again the product in \(\cA^{G}_{\cO}\) (and in
fact, here it is the biproduct). 

\begin{definition}[{\cite[Definition 2.24]{BHOSpectra}}]
An \(\cO\)-Mackey functor is a product preserving functor
\[
\mM\colon\cA^G_{\cO}\to \Set.
\]
A semi-Mackey functor is a Mackey functor if it is group complete.
  
A map of \(\cO\)-Mackey functors is a natural
transformation. We will denote the category of \(\cO\)-Mackey functors
by $\cO\mhyphen\Mackey$.
\end{definition}

\begin{definition}
The group completion of the representable \(\cA_{\cO}(\ast,\mhyphen)\)
is the \(\cO\)-Burnside Mackey functor: \(\mA_{\cO}\). For any subgroup \(H\subset G\), \[
    \mA_{\cO}(G/H)=\mathbb Z\big\{[H/K]\mid H/K\in\pi_0\cO(H)\big\}
\]
is the group completion of the commutative monoid \(\pi_0\cO(H)\) of isomorphism classes of admissible \(H\)-sets.
\end{definition}

\section{Additive Incompleteness}\label{sec:AddIncomplete}

We now study the common generalization of incomplete Mackey and
Tambara functors in terms of polynomials with additional
restrictions.

\begin{definition}\label{def:OaOmPoly}
If $\cO_{a}$ and $\cO_m$ are indexing categories, then let $\cP_{\cO_a,\cO_m}^{G}$ by the wide subgraph of $\cP^{G}$ with morphisms the isomorphism classes of polynomials
\[
X\leftarrow S\xrightarrow{g} T\xrightarrow{h} Y,
\]
with $g\in \cO_m$ and $h\in\cO_a$.
\end{definition}

These are polynomials with norms parameterized by \(\cO_m\) and transfers parameterized by \(\cO_a\). We have no reason to believe, {\emph{a priori}}, that this subgraph is actually a subcategory, and we can see what can go wrong with an explicit example. 
\begin{example}
    A \(G=C_2\)-Tambara functor is the following data:
\begin{enumerate}
    \item A commutative Green functor \(\mR\) and
    \item a multiplicative map \(n\colon \mR(C_2/\{e\})\to \mR(C_2/C_2)\)
\end{enumerate}
that are required to satisfy the relations
\begin{enumerate}
    \item If \(\bar{x}\) is the Weyl conjugate of \(x\) in \(\mR(C_2/\{e\})\), then
    \[
    n(x)=n(\bar{x})\text{ and } Res\circ n(x)=x\cdot\bar{x},
    \]
    where \(Res\) is the restriction map \(\mR(C_2/C_2)\to \mR(C_2/\{e\})\), and
    \item for any \(a,b\in\mR(C_2/\{e\})\), we have
    \[
    n(a+b)=n(a)+n(b)+Tr(a\cdot\bar{b}),
    \]
    where \(Tr\) is the transfer map \(\mR(C_2/\{e\})\to\mR(C_2/C_2)\).
\end{enumerate}
In particular, the existence of the norm in this case necessitates the existence of the transfer map.
\end{example}

\begin{remark}
When dealing with specific classes of groups $G$, it might be
reasonable to consider weaker compatibility requirements than the ones
we study here.  Specifically, it is not the case that for every group
\(G\) we need to require transfers for all elements to consistently
talk about norms. In fact, Georgakopoulos has shown that in that
absence of torsion, the transfers themselves are forced by the
existence of certain norms~\cite{Georgakopoulos}.
\end{remark}

This puts some constraints on the additive indexing category. In fact, compatibility is a purely combinatorial condition. 

\begin{definition}\label{def:Compatible}
Let $\cO_a$ and $\cO_m$ be indexing categories. The indexing system $\cO_m$ {\defemph{distributes over}} $\cO_a$ if for all maps $h\colon S\to T$ in $\cO_m$, we have
\[
\Pi_h\big((\cO_a)_{/S}\big)\subset(\cO_a)_{/T}.
\]
In this situation, we will say that the pair \((\cO_a,\cO_m)\) is {\defemph{compatible}}.
\end{definition}

The force of this definition is given by the following theorem.

\begin{theorem}\label{thm:Compatible}
	If \((\cO_a,\cO_m)\) is compatible, then \(\cP^{G}_{\cO_a,\cO_m}\) is a subcategory of \(\cP^G\).
\end{theorem}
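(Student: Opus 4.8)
The plan is to verify the two defining properties of a subcategory: that $\cP^G_{\cO_a,\cO_m}$ contains all identity morphisms and that it is closed under composition. The identities are immediate, since the identity on $T$ is represented by $T_{\id}\circ N_{\id}\circ R_{\id}$ and both $\cO_a$ and $\cO_m$, being indexing categories, are wide and in particular contain all identity maps. The entire content of the theorem is therefore closure under composition, and the strategy is to take the composite of two admissible polynomials in $\cP^G$, reduce it to the canonical form $T_h\circ N_g\circ R_f$ using the structural interchange propositions of Section~\ref{ssec:Polynomials}, and check at each step that the transfer data remains in $\cO_a$ and the norm data remains in $\cO_m$.

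Concretely, I would start from a composite
\[
(T_{h_2}\circ N_{g_2}\circ R_{f_2})\circ(T_{h_1}\circ N_{g_1}\circ R_{f_1}),
\]
with $g_1,g_2\in\cO_m$ and $h_1,h_2\in\cO_a$, and push all the $R$'s to the right while collecting the $T$'s and $N$'s. The two pullback-interchange identities $R_f\circ T_g=T_{g'}\circ R_{f'}$ and $R_f\circ N_g=N_{g'}\circ R_{f'}$ let me move $R_{f_2}$ past $T_{h_1}$ and then past $N_{g_1}$; here pullback stability of $\cO_a$ guarantees that the pulled-back transfer stays in $\cO_a$, and pullback stability of $\cO_m$ guarantees that the pulled-back norm stays in $\cO_m$. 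This leaves a string of the shape $T_{h_2}\circ N_{g_2}\circ T_{h_1'}\circ N_{g_1'}\circ R$, with $h_1'\in\cO_a$ and $g_1'\in\cO_m$. The one genuinely nontrivial move is now the interchange of $N_{g_2}$ and $T_{h_1'}$, handled by the distributivity identity $N_g\circ T_h=T_{h'}\circ N_{g'}\circ R_{f'}$ coming from the exponential diagram: here $h'=\Pi_{g_2}(h_1')$ is the dependent product of $h_1'$ along $g_2$, while $g'$ is a pullback of $g_2$. Pullback stability again keeps $g'\in\cO_m$, and \emph{this is exactly the point at which compatibility is needed}: since $h_1'\in(\cO_a)_{/S}$ for $S$ the source of $g_2\in\cO_m$, the hypothesis $\Pi_{g_2}\big((\cO_a)_{/S}\big)\subset(\cO_a)_{/T}$ forces $h'=\Pi_{g_2}(h_1')\in\cO_a$. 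After this swap the remaining reductions are formal: the new transfer $T_{h'}$ composes with $T_{h_2}$ as $T_{h_2\circ h'}$, a further $R$--$N$ interchange moves the last restriction to the right (keeping the norm in $\cO_m$ by pullback stability), and the two resulting norms compose into a single $N$; since $\cO_a$ and $\cO_m$ are closed under composition the final transfer lies in $\cO_a$ and the final norm in $\cO_m$, while the accumulated $R$'s collapse to one unconstrained restriction.

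The main obstacle is the $N$--$T$ interchange, and it is no accident that this is precisely where the compatibility hypothesis is consumed: the dependent product $\Pi_{g_2}$ is the only operation in the reduction that can produce a transfer not already visible among the input data, so it is the unique place where the additive indexing category could fail to be large enough. The remaining work is careful bookkeeping to confirm that every intermediate transfer appearing before the distributivity step is obtained from $h_1$ or $h_2$ by iterated pullback (hence lies in $\cO_a$ by pullback stability), that the object fed into $\Pi_{g_2}$ genuinely lives in the slice $(\cO_a)_{/S}$ over the source of $g_2$, and that the various pullback and exponential squares assemble coherently. None of this is conceptually hard once the single distributivity step is understood, but it must be tracked precisely, since the whole theorem turns on the source and target of $\Pi_{g_2}$ matching the slices named in Definition~\ref{def:Compatible}.
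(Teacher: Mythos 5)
Your proposal is correct and follows essentially the same route as the paper: verify the $R$--$T$ and $R$--$N$ interchanges via pullback stability, compose $T$'s and $N$'s using closure under composition, and isolate the $N$--$T$ interchange via the exponential diagram as the one place where compatibility is consumed, with $g'$ staying in $\cO_m$ by pullback stability and $h'=\Pi_g(h)$ landing in $\cO_a$ exactly by the compatibility hypothesis. The only difference is presentational — you trace the full rewriting of a two-fold composite while the paper checks the individual interchange identities — but the mathematical content is identical.
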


\begin{proof}
We need to verify the interchange formulae for \(T_h\), \(N_g\), and \(R_f\), where \(h\in\cO_a\), \(g\in\cO_m\), and \(f\) is arbitrary. 
	
	For interchanges with \(R\), assume we have pullback squares
	\[
		\begin{tikzcd}
			{S'}
				\ar[r, "{f'}"]
				\ar[d, "{g'}"']
				&
			{S}
				\ar[d, "g"]
				\\
			{T'}
				\ar[r, "f"']
				&
			{T,}
		\end{tikzcd}\qquad\text{ and }\qquad
		\begin{tikzcd}
			{U'}
				\ar[r, "{f''}"]
				\ar[d, "{h'}"']
				&
			{U}
				\ar[d, "h"]
				\\
			{T'}
				\ar[r, "f"']
				&
			{T,}
		\end{tikzcd}
	\]
	where \(g\in\cO_m\) and \(h\in\cO_a\). By pullback stability, \(g'\) is also in \(\cO_m\) and \(h'\) in \(\cO_a\), and hence 
	\[
		R_f\circ N_g=N_{g'}\circ R_{f'}\text{ and } R_f\circ T_h=T_{h'}\circ R_{f''}
	\]
	are again elements of \(\cP^G_{\cO_a,\cO_m}\). 
	
	Since both are subcategories, the compositions of \(T\)s or \(N\)s are also correct: for any composable pair \(g_1,g_2\in\cO_m\) and \(h_1,h_2\in\cO_a\),
	\[
	N_{g_1}\circ N_{g_2}=N_{g_1\circ g_2}\text{ and }T_{h_1}\circ T_{h_2}=T_{h_1\circ h_2}
	\]
	is of the desired form. 
	
	The key thing to check is therefore the interchange between \(T_h\) and \(N_g\). For this, let
	\[
	\begin{tikzcd}
		{T}
			\ar[d, "g"']
			&
		{S}
			\ar[l, "h"']
			&
		{T\timesover{U} \Pi_{h}(g)}
			\ar[l, "{f'}"']
			\ar[d, "{g'}"]
			\\
		{U}
			&
			&
		{\Pi_{h}(S)}
			\ar[ll, "{h'}"]
	\end{tikzcd}
	\]
	be an exponential diagram expressing the interchange relation
	\[
		N_g\circ T_h=T_{h'}\circ N_{g'}\circ R_{f'}.
	\]
	The map \(g'\) is the pullback of \(g\) along \(h'\), and hence \(g\) being in \(\cO_m\) means that \(g'\) is in \(\cO_m\). The assumption that \((\cO_a,\cO_m)\) is compatible means exactly that \(h'\) is in \(\cO_a\), and hence the interchange is again in \(\cP^G_{\cO_a,\cO_m}\).
	\end{proof}
	
\begin{example}
    For any \(\cO_m\), the pair \((\cO^{gen},\cO_m)\) is compatible.
    The associated category of polynomials are precisely the
    ``polynomials with exponents in \(\cO\)'' studied in~\cite{BHOTamb}.
\end{example}

Since the categories \(\cO_a\) and \(\cO_m\) are indexing categories, for any objects \(S\) and \(T\), the maps
\[
\pi_S=[S\amalg T\leftarrow S\xrightarrow{=} S\xrightarrow{=} S]\text{ and }
\pi_T=[S\amalg T\leftarrow T\xrightarrow{=} T\xrightarrow{=} T]
\]
are always in \(\cP^{G}_{\cO_a,\cO_m}\). These are the projection maps in the category \(\cP^{G}\), so we deduce the following.

\begin{proposition}\label{prop:ProductsExist}
	If \((\cO_a,\cO_m)\) is compatible, then the disjoint union of finite \(G\)-sets is the categorical product in \(\cP^G_{\cO_a,\cO_m}\).
\end{proposition}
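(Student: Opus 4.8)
The plan is to promote the universal property of the product from the ambient category $\cP^G$ to the subcategory $\cP^G_{\cO_a,\cO_m}$. Three ingredients are already in hand: $\cP^G_{\cO_a,\cO_m}$ is a genuine subcategory by Theorem~\ref{thm:Compatible}; it is wide, hence contains the object $S\amalg T$; and the projections $\pi_S,\pi_T$ lie in it, as observed just above. Since the earlier proposition identifies $S\amalg T$ together with $\pi_S,\pi_T$ as the product in $\cP^G$, it suffices to verify existence and uniqueness of the pairing \emph{within} the subcategory.

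Uniqueness is formal: the inclusion $\cP^G_{\cO_a,\cO_m}\hookrightarrow\cP^G$ is faithful, so two morphisms of the subcategory inducing the same composites with $\pi_S$ and $\pi_T$ already coincide in $\cP^G$, where the pairing is unique. The real content is existence: given $a\colon W\to S$ and $b\colon W\to T$ in the subcategory, I must exhibit a pairing $\langle a,b\rangle$ that again lies in $\cP^G_{\cO_a,\cO_m}$.

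I will build $\langle a,b\rangle$ explicitly as a ``coproduct of bispans''. Writing $a$ as the bispan $W\xleftarrow{f_a}A_1\xrightarrow{g_a}A_2\xrightarrow{h_a}S$ and $b$ as $W\xleftarrow{f_b}B_1\xrightarrow{g_b}B_2\xrightarrow{h_b}T$, with $g_a,g_b\in\cO_m$ and $h_a,h_b\in\cO_a$, I set
\[
\langle a,b\rangle := T_{h_a\amalg h_b}\circ N_{g_a\amalg g_b}\circ R_{[f_a,f_b]},
\]
where $[f_a,f_b]\colon A_1\amalg B_1\to W$ is the copairing. Because $\cO_a$ and $\cO_m$ are finite-coproduct-complete, they are closed under coproducts of morphisms, so $g_a\amalg g_b\in\cO_m$ and $h_a\amalg h_b\in\cO_a$; hence this bispan lies in $\cP^G_{\cO_a,\cO_m}$.

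It then remains to check $\pi_S\circ\langle a,b\rangle=a$, the case of $\pi_T$ being symmetric. Writing $\pi_S=R_{\iota_S}$ for the inclusion $\iota_S\colon S\hookrightarrow S\amalg T$, I push $R_{\iota_S}$ rightward through the interchange identities: the $R$--$T$ and $R$--$N$ pullback formulae successively restrict each coproduct onto its $A$-summand --- the preimage of $S$ under $h_a\amalg h_b$ is the summand $A_2$, and likewise the preimage of $A_2$ under $g_a\amalg g_b$ is $A_1$ --- after which contravariant functoriality of $R$ collapses $R_{\iota_{A_1}}\circ R_{[f_a,f_b]}$ to $R_{f_a}$, leaving $T_{h_a}\circ N_{g_a}\circ R_{f_a}=a$. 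I expect this last computation to be the main obstacle: one must confirm that the relevant squares are genuine pullbacks, so that the interchange propositions apply and the summand inclusions emerge. The empty product $\emptyset$ is the terminal object by the same reasoning, using that $R_{\emptyset\to W}$ is the unique morphism $W\to\emptyset$ in the subcategory.
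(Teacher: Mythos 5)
Your proof is correct and follows the same route as the paper, which simply observes that the projections $\pi_S,\pi_T$ lie in $\cP^G_{\cO_a,\cO_m}$ (by wideness) and deduces the result from the fact that they exhibit $S\amalg T$ as the product in $\cP^G$. The one substantive step the paper leaves implicit --- that the pairing of two morphisms of the subcategory again lies in the subcategory, realized as the disjoint union of bispans and using finite coproduct completeness of $\cO_a$ and $\cO_m$ --- is exactly what you supply, and your verification that $\pi_S\circ\langle a,b\rangle=a$ via the pullback interchange identities is sound.
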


In ordinary Mackey or incomplete Tambara functors, additive or
multiplicative monoid structures on the mapping sets arise from
transfering or norming along the fold maps. The proof of the usual
case for Tambara functors (as in~\cite{Tambara} or also
in~\cite{Strickland}) goes through without change in the bi-incomplete
case, since the key features are that all of the terms are in fact
again in the category in question.  That is, in the bi-incomplete
case, we simply skipped some transfers as well as norms.  

\begin{proposition}\label{prop:MonoidStructure}
Let \((\cO_a,\cO_m)\) be a compatible pair of indexing
subcategories. Then the hom objects in \(\cP^{G}_{\cO_a,\cO_m}\) are
naturally commutative semi-ring valued, with addition given by 
    \[
            [S\leftarrow T_1\to T_2\to U]+
            [S\leftarrow V_1\to V_2\to U]=
            [S\leftarrow T_1\amalg V_1\to T_2\amalg V_2\to U],
    \]
    and multiplication given by
    \begin{multline*}
        [S\leftarrow T_1\to T_2\to U]\cdot
        [S\leftarrow V_1\to V_2\to U]=\\
        \Big[S\leftarrow \big((T_1\timesover{U} V_2)\amalg (T_2\timesover{U} V_1)\to T_2\timesover{U} V_2\to U\Big].
    \end{multline*}
    
    The transfer maps are map of additive monoids and the norm maps are maps of multiplicative monoids.
\end{proposition}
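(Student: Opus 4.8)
The plan is to realize both operations as post-composition with a single transfer and a single norm along the fold map, to read off the stated formulas from that description, and then to borrow the semiring axioms wholesale from the classical computation in \cite{Tambara,Strickland}, checking only that every morphism appearing there stays inside \(\cP^G_{\cO_a,\cO_m}\). The conceptual point is that, once \(\cP^G_{\cO_a,\cO_m}\) is known to be a subcategory (Theorem~\ref{thm:Compatible}) and disjoint union is known to be its categorical product (Proposition~\ref{prop:ProductsExist}), the entire structure is forced by whichever fold-type morphisms turn out to be admissible.

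First I would record that the fold map \(\nabla_U\colon U\amalg U\to U\) lies in both \(\cO_a\) and \(\cO_m\): over each point its fiber is the trivial two-element set, and every indexing system contains all trivial sets. Hence both
\[
T_{\nabla_U}=[U\amalg U\xleftarrow{=}U\amalg U\xrightarrow{=}U\amalg U\xrightarrow{\nabla_U}U]
\quad\text{and}\quad
N_{\nabla_U}=[U\amalg U\xleftarrow{=}U\amalg U\xrightarrow{\nabla_U}U\xrightarrow{=}U]
\]
are morphisms of \(\cP^G_{\cO_a,\cO_m}\). By Proposition~\ref{prop:ProductsExist} a pair of polynomials \(p,q\colon S\to U\) is the same datum as a single map \(\langle p,q\rangle\colon S\to U\amalg U\), so I would \emph{define} \(p+q:=T_{\nabla_U}\circ\langle p,q\rangle\) and \(p\cdot q:=N_{\nabla_U}\circ\langle p,q\rangle\). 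Since \(\cP^G_{\cO_a,\cO_m}\) is closed under composition by Theorem~\ref{thm:Compatible} and all three constituents lie in it, both operations are well defined and land in the correct hom-object.

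Next I would identify the units and match the explicit formulas. The additive and multiplicative units factor through the terminal object \(\emptyset\): they are \(T_{\emptyset\to U}\circ R_{\emptyset\to S}\) and \(N_{\emptyset\to U}\circ R_{\emptyset\to S}\), and the maps \(\emptyset\to U\) have empty fibers, hence lie in both indexing categories, keeping the units inside the subcategory. Unwinding \(T_{\nabla_U}\circ\langle p,q\rangle\) through the pullback interchange with \(R\) recovers the stated additive formula verbatim (the transfer legs simply combine to \(T_2\amalg V_2\to U\)), while unwinding \(N_{\nabla_U}\circ\langle p,q\rangle\) by distributing the fold norm past the two transfer legs via the exponential diagram recovers the stated multiplicative formula, with its two pullback summands \((T_1\timesover{U}V_2)\amalg(T_2\timesover{U}V_1)\) and middle term \(T_2\timesover{U}V_2\). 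With the operations so presented, associativity, commutativity, distributivity, and the unit laws are exactly those verified for \(\cP^G\) in \cite{Tambara,Strickland}; they descend to \(\cP^G_{\cO_a,\cO_m}\) because every witnessing morphism was already shown to remain in the subcategory.

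For the final sentence I would deduce additivity of transfers and multiplicativity of norms from the naturality of the fold construction: for \(h\colon U\to W\) in \(\cO_a\) the identity \(\nabla_W\circ(h\amalg h)=h\circ\nabla_U\) together with \(T_{g\circ f}=T_g\circ T_f\) gives \(T_h\circ T_{\nabla_U}=T_{\nabla_W}\circ T_{h\amalg h}\), whence \(T_h(p+q)=T_h(p)+T_h(q)\); the identical argument with \(N\) in place of \(T\) and \(g\in\cO_m\) yields \(N_g(p\cdot q)=N_g(p)\cdot N_g(q)\), with all maps staying in the relevant indexing category. The only genuine subtlety — and the step I expect to be the main obstacle — is the multiplicative formula: distributing the fold norm past the transfer legs is a norm of a transfer, so the transfer legs it produces must land back in \(\cO_a\). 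This is precisely what compatibility guarantees, and is already subsumed in the closure of \(\cP^G_{\cO_a,\cO_m}\) under composition established in Theorem~\ref{thm:Compatible}; everything else is the routine bookkeeping of the classical case.
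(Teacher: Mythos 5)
Your proposal is correct and follows essentially the same route as the paper: the paper likewise obtains the two operations by transferring and norming along the fold map and then observes that the classical verification of the semiring axioms from Tambara and Strickland goes through verbatim because every morphism appearing in it (fold maps, units through \(\emptyset\), and the exponential-diagram interchange) remains in \(\cP^G_{\cO_a,\cO_m}\) by Theorem~\ref{thm:Compatible}. Your write-up simply makes explicit the bookkeeping that the paper leaves to the reader, including the correct unwinding of \(N_{\nabla_U}\circ\langle p,q\rangle\) to the stated product formula.
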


This is formally exactly like what we see with Tambara and incomplete
Tambara functor.  There the hom objects are commutative semi-rings, and
the restriction maps are ring homomorphisms.  

\begin{definition}\label{def:OaOmTamb}
	Let \((\cO_a,\cO_m)\) be a compatible pair of indexing categories. An \((\cO_a,\cO_m)\)-semi-Tambara functor is a product preserving functor
	\[
		\mR\colon \cP^G_{\cO_{a},\cO_{m}}\to \Set.
	\]
	An \((\cO_a,\cO_m)\)-Tambara functor is an \((\cO_a,\cO_m)\)-semi-Tambara functor \(\mR\) such that for all finite \(G\)-sets \(T\), \(\mR(T)\) is an abelian group.
	
	A map of \((\cO_a,\cO_m)\)-Tambara functors is a natural transformation of product preserving functors.
	
	We will write \((\cO_a,\cO_m)\mhyphen\Tamb\) to denote the
        category of \((\cO_a,\cO_m)\)-Tambara functors. 
\end{definition}

Implicit in the definition is that any product preserving functor 
\[
    \cP^G_{\cO_a,\cO_m}\to\Set
\]
is naturally commutative semi-ring valued. This was a fundamental result of Tambara in the ordinary Tambara functor case \cite{Tambara}. Tambara also showed that the group completion of any semi-Tambara functor naturally has the structure of a Tambara functor. As in the incomplete Tambara functor case, the proof goes through without change.

\begin{proposition}[{\cite[]{Tambara}, \cite[]{Strickland}}]
    Given any \((\cO_a,\cO_m)\)-semi-Tambara functor \(\mR\), the object-wise group completion is an \((\cO_a,\cO_m)\)-Tambara functor.
\end{proposition}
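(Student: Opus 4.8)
The plan is to form the group completion objectwise and verify that every structure map descends, with the norms being the only nontrivial case. By the implicit Tambara result recorded above together with Proposition~\ref{prop:MonoidStructure}, the semi-Tambara functor $\mR$ assigns to each finite $G$-set $T$ a commutative semi-ring $\mR(T)$, in which the restrictions $R_f$ are semi-ring homomorphisms, the transfers $T_h$ are additive, and the norms $N_g$ are multiplicative. First I would set $\mR^+(T)$ to be the Grothendieck group completion of the underlying additive commutative monoid of $\mR(T)$. Since $\mR(T)$ is a commutative semi-ring, the multiplication distributes over the formal differences, so $\mR^+(T)$ is canonically a commutative ring, and there is a natural map $\mR(T)\to\mR^+(T)$.

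The additive and mixed structure then extends purely formally. Restrictions and transfers are additive, so by the universal property of group completion they extend uniquely to ring homomorphisms $R_f^+$ and additive maps $T_h^+$ on $\mR^+$, and these assignments are functorial in $f$ and $h$. The relations that involve only $R$ and $T$---the span (Mackey) relations and the interchange $R_f\circ T_h=T_{h'}\circ R_{f'}$---are identities among additive operations, hence are preserved under group completion with no further work.

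The essential point is extending the norms $N_g$, which are multiplicative but not additive. Here I would use the exponential (Tambara reciprocity) relation $N_g\circ T_h=T_{h'}\circ N_{g'}\circ R_{f'}$, which $\mR$ satisfies because $(\cO_a,\cO_m)$ is compatible and hence every norm and transfer appearing in an exponential diagram over a map of $\cO_m$ already lies in $\cP^G_{\cO_a,\cO_m}$. Applied to the fold maps, this relation expresses $N_g(a+b)$ as $N_g(a)+N_g(b)$ plus a sum of transfers of products of restrictions of $a$ and $b$, as in the $C_2$ and $C_4$ examples. Writing a general element of $\mR^+(S)$ as a formal difference $a-b$ and applying this formula to the identity $a=(a-b)+b$, the term $N_g(a-b)$ appears exactly once and additively, so one may solve
\[
N_g^+(a-b)=N_g(a)-N_g(b)-\big(\text{transfers of products of }R^+_{\,\cdot}(a-b)\text{ and }R_{\,\cdot}(b)\big),
\]
where every term on the right already makes sense in the commutative ring $\mR^+(T)$, since restrictions, transfers, and multiplication have all been extended there. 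This defines $N_g^+\colon\mR^+(S)\to\mR^+(T)$.

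What remains is to check that $N_g^+$ is well defined---independent of the representative $a-b$, i.e.\ unchanged when $a+b'=a'+b$---that it is multiplicative, and that together with $R_f^+$ and $T_h^+$ it still satisfies the remaining interchanges ($N$-$N$ composition, $R_f\circ N_g=N_{g'}\circ R_{f'}$, and the full exponential formula). The hard part will be precisely this well-definedness and compatibility verification for the norm, where the combinatorics of the reciprocity formula enter. This, however, is exactly the computation carried out by Tambara and by Strickland in the complete case, and the argument goes through unchanged: compatibility of $(\cO_a,\cO_m)$, in the sense of Definition~\ref{def:Compatible}, guarantees that each transfer produced by an exponential diagram over a map in $\cO_m$ is admissible for $\cO_a$, so every operation used in defining $N_g^+$ and in the verification remains a morphism of $\cP^G_{\cO_a,\cO_m}$. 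Thus the only modification relative to the incomplete case is the bookkeeping that the transfers arising in the reciprocity formulae lie in $\cO_a$, which is precisely what compatibility provides.
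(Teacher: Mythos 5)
Your overall strategy is the same as the paper's: the paper gives no independent argument, simply observing (as you do) that Tambara's and Strickland's group-completion proofs go through verbatim once one knows that every restriction, norm, and transfer appearing in the relevant exponential diagrams already lies in $\cP^G_{\cO_a,\cO_m}$, which is exactly what compatibility of $(\cO_a,\cO_m)$ guarantees. You correctly isolate that as the only new content. One caveat about the extra detail you supply: your explicit formula
\[
N_g^+(a-b)=N_g(a)-N_g(b)-\bigl(\text{transfers of products of }R^+(a-b)\text{ and }R(b)\bigr)
\]
is circular as written, because the cross-terms in a Tambara reciprocity formula are not merely products of restrictions --- they involve norms along \emph{other} maps applied to the arguments (see the $C_4$ example, where $N_e^{C_4}(a+b)$ contains $N_e^{C_2}(a)\cdot\gamma N_e^{C_2}(b)$). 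So the right-hand side already requires extended norms at other levels, and making your definition rigorous needs an induction (or, as Tambara and Strickland actually do it, the observation that $N_g$ followed by $\mR(T)\to\mR^+(T)$ is a polynomial map of commutative monoids of bounded degree, and such maps extend uniquely to group completions). Since you explicitly defer the well-definedness and relation-checking to the cited sources --- which is all the paper does --- this does not invalidate your argument, but the displayed formula should not be presented as a self-contained definition.
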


\subsection*{More general compatibility}

The proof of Theorem~\ref{thm:Compatible} only used that indexing
categories are wide, pullback stable, together with a condition of
closure under dependent product.  That is, the same proof works to
establish the analogous result in this more general case.  It turns out that
the wide subcategory of isomorphisms \(\Set^G_{\cong}\), which is not
an indexing category, will be necessary to define the ``underlying''
incomplete additive and multiplicative Mackey functors associated to
a bi-incomplete Tambara functor.

\begin{proposition}\label{prop:Isos}
    Let \(\ccD\) be any wide, pullback stable subcategory. Then
    \begin{enumerate}
        \item \(\Set^G_{Iso}\) distributes over \(\ccD\), and
        \item \(\ccD\) distributes over \(\Set^G_{Iso}\).
    \end{enumerate}
\end{proposition}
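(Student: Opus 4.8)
The plan is to treat the two parts separately, in each case reducing the claim to an elementary property of the dependent product $\Pi_h$ and its defining adjunction with pullback. First I would record the two standing observations: that $\Set^G_{Iso}$ is itself wide and pullback stable (a pullback of an isomorphism is an isomorphism), so that the generalized notion of ``distributes over'' genuinely applies to it; and that, by Definition~\ref{def:DepProd}, $\Pi_h$ is the right adjoint to $h^\ast$, and hence preserves all limits. Each half of the proposition will follow by exploiting one of these facts for the appropriate class of maps $h$. Recall that per the definition, ``$\cO_m$ distributes over $\cO_a$'' tests the maps $h$ in $\cO_m$ against slices of $\cO_a$.

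For part (1), the maps $h\colon S\to T$ range over isomorphisms, while the slices we feed in are those of $\ccD$. The key point is that along an isomorphism the dependent product is computed by a pullback. Concretely, when $h$ is an isomorphism the pullback functor $h^\ast\colon\Set^G_{/T}\to\Set^G_{/S}$ is an equivalence with inverse $(h^{-1})^\ast$; since the inverse of an equivalence serves simultaneously as its left and right adjoint, we obtain a natural isomorphism $\Pi_h\cong (h^{-1})^\ast$. Thus for $p\colon X\to S$ in $\ccD_{/S}$, the object $\Pi_h(p)$ is, up to isomorphism in $\Set^G_{/T}$, the pullback of $p$ along $h^{-1}$. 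As $\ccD$ is pullback stable, this pullback lies in $\ccD$, and hence $\Pi_h\big(\ccD_{/S}\big)\subset\ccD_{/T}$. (Equivalently, one observes directly that $\Pi_h$ is postcomposition with $h$, and that $h\circ p$ is isomorphic in the slice to the pullback of $p$ along $h^{-1}$.) Note that one does not need $h$ to lie in $\ccD$, which is exactly why $\Set^G_{Iso}$ can play this role even though it is not an indexing category.

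For part (2), the maps $h\colon S\to T$ range over all of $\ccD$, while the slice we feed in is $(\Set^G_{Iso})_{/S}$. The observation here is that every object $u\colon X\to S$ of $(\Set^G_{Iso})_{/S}$ is an isomorphism, hence isomorphic in the slice to the identity $\id_S$; that is, up to isomorphism $(\Set^G_{Iso})_{/S}$ consists of nothing but the terminal object of $\Set^G_{/S}$. Since $\Pi_h$ is a right adjoint it preserves the terminal object, so $\Pi_h(\id_S)\cong\id_T$, the terminal object of $\Set^G_{/T}$. As $\id_T$ is an isomorphism it lies in $(\Set^G_{Iso})_{/T}$, giving $\Pi_h\big((\Set^G_{Iso})_{/S}\big)\subset(\Set^G_{Iso})_{/T}$.

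The only genuinely delicate point is the identification in part (1) of $\Pi_h$ along an isomorphism with pullback along its inverse; everything else is formal. I would emphasize throughout that all containments are understood up to isomorphism of slice objects, which is precisely the convention under which the polynomial category is formed, so that no additional hypothesis on $\ccD$ (such as containing all isomorphisms) is required.
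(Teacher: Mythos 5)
Your proposal is correct and follows essentially the same route as the paper: part (1) via the identification $\Pi_h\cong (h^{-1})^\ast$ for an isomorphism $h$ together with pullback stability of $\ccD$, and part (2) by observing that $\Pi_h$ carries (objects isomorphic to) the terminal object of the slice to the terminal object. Your part (2) is slightly more explicit than the paper's one-line version, but it is the same argument.
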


\begin{proof}
For the first claim, note that the pullback along an isomorphism is an
equivalence of categories, and hence the right adjoint, the dependent product, is naturally
isomorphic to the pullback along the inverse to the isomorphism.  Hence
any pullback stable subcategory is closed under the dependent product along an isomorphism. 
    
For the second claim, since dependent product is a functor, it takes
isomorphisms to isomorphisms.
\end{proof}

Note that $\Set^G_{Iso}$ is initial amongst all wide,
pullback stable subcategories of \(\Set^G\).

\section{Categorical Properties}\label{sec:CatProperties}

In this section, we describe the formal structure of the category of
bi-incomplete Tambara functors.

\subsection{The box product for incomplete Mackey functors}

The box product of Mackey functors is essential in showing that
Tambara functors have colimits, since this serves as a model for the
Mackey functor underlying the coproduct.  For coproducts in the
bi-incomplete case, we need to build the box product of incomplete
Mackey functors.

\begin{lemma}\label{lem:ClosureUnderProducts}
    Indexing categories are closed under products: if \(f_i\colon S_i\to T_i\) are in \(\cO\) with \(i=1,2\), then
    \[
    f_1\times f_2\colon S_1\times S_2\to T_1\times T_2
    \]
    is as well.
\end{lemma}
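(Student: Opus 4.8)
The plan is to realize $f_1\times f_2$ as a composite of two maps, each of which is a pullback of one of the $f_i$, and then to invoke the fact that $\cO$, being a subcategory, is closed under composition. Concretely, I would factor the product map as
\[
f_1\times f_2 = (f_1\times\id_{T_2})\circ(\id_{S_1}\times f_2)\colon
S_1\times S_2 \to S_1\times T_2 \to T_1\times T_2.
\]

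The key observation is that each factor is obtained by pulling back one of the $f_i$ along a projection. For the first factor, the square
\[
\begin{tikzcd}
{S_1\times S_2} \ar[r] \ar[d, "{\id_{S_1}\times f_2}"'] & {S_2} \ar[d, "{f_2}"] \\
{S_1\times T_2} \ar[r] & {T_2,}
\end{tikzcd}
\]
with horizontal arrows the projections onto the second factor, is a pullback; it exhibits $\id_{S_1}\times f_2$ as the pullback of $f_2$ along $S_1\times T_2\to T_2$. Since $f_2\in\cO$ and $\cO$ is pullback stable, we get $\id_{S_1}\times f_2\in\cO$. Symmetrically, $f_1\times\id_{T_2}$ is the pullback of $f_1$ along the projection $T_1\times T_2\to T_1$, so it too lies in $\cO$.

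Finally I would conclude that $f_1\times f_2\in\cO$: it is a composite of two arrows of $\cO$, and a subcategory is by definition closed under composition. The main obstacle is essentially nothing beyond bookkeeping; the only points requiring care are correctly identifying the two factors as genuine pullback squares (so that pullback stability actually applies) and remembering that closure under composition is built into the meaning of ``subcategory'' rather than being one of the explicitly listed closure conditions (wide, pullback stable, finite coproduct complete). Notably, finite coproduct completeness is not needed here at all.
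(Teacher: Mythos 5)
Your proof is correct, and it is genuinely more direct than the one in the paper. Both arguments begin by factoring $f_1\times f_2$ through a middle object so that each factor moves only one coordinate; the difference is in how the factor $\id\times f$ is handled. The paper decomposes $S_1$ into orbits, applies the shearing isomorphism $G/H\times S\cong G\timesover{H}i_H^\ast S$, and then cites external results that induction and restriction preserve indexing categories. You instead observe that $\id_{S_1}\times f_2$ sits in a pullback square over $f_2$ along the projection $S_1\times T_2\to T_2$ (and symmetrically for the other factor), so pullback stability alone puts each factor in $\cO$, and closure under composition finishes the argument. Your squares are indeed pullbacks in $\Set^G$, and the projections are legitimate (arbitrary) maps to pull back along, so nothing is missing. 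What your route buys is economy and generality: it uses only wideness, pullback stability, and the subcategory axiom, so it applies verbatim to any wide pullback-stable subcategory (e.g.\ $\Set^G_{Iso}$, which is not an indexing category but appears later in the paper), and it needs neither the twist isomorphism nor finite coproduct completeness. What the paper's route buys is a translation into the language of indexing systems --- it exhibits $1\times f$ as induced from a restriction, which makes the levelwise statement ``admissible sets are closed under products'' visible and ties the lemma to already-established change-of-group results --- at the cost of invoking those citations.
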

\begin{proof}
    It suffices to show this when \(f_1\) is the identity, since 
    \[
    f_1\times f_2=(1\times f_2)\circ (f_1\times 1),
    \]
    and the twist map is an isomorphism (and hence in any indexing category). Since the Cartesian product distributes over the disjoint union and since indexing categories are pullback stable, we reduce to the case \(S_1=G/H\). This means we consider 
    \[
    1 \times f\colon G/H\times S\to G/H\times T.
    \]
    We have a commutative square
    \[
        \begin{tikzcd}
        {G/H\times S}
            \ar[d, "{1\times f}"']
            &
        {G\timesover{H}i_H^\ast S}
            \ar[l, "\cong"']
            \ar[d, "{1\timesover{h} i_H^\ast f}"]
            \\
        {G/H\times T}   
            \ar[r, "\cong"']
            &
        {G\timesover{H}i_H^\ast T,}
        \end{tikzcd}
    \]
    where the unlabeled maps are the natural ``shearing'' isomorphisms (the asymmetrical directions of the horizontal maps is to help make transparent the form of the right-hand map). However, by \cite[Proposition 3.13]{BHOTamb}, induction preserves indexing categories, and by \cite[Proposition 6.3]{BHOTamb}, so does restriction.
\end{proof}

The Cartesian product endows \(\cA^G\) with a symmetric monoidal
structure (e.g., see the discussion in~\cite[Section 3]{Strickland}).
Lemma~\ref{lem:ClosureUnderProducts} shows this is compatible with the inclusions
\[
\cA^G_{\cO}\hookrightarrow \cA^G
\]
induced by \(\cO\hookrightarrow \Set^G\), where \(\cA^G\) is the Lindner category of Example~\ref{exam:BurnsideCat}.

\begin{corollary}\label{cor:AOSymMonoid}
The category \(\cA^G_{\cO}\) is a symmetric monoidal subcategory of
\(\cA^G\) under the Cartesian product.
\end{corollary}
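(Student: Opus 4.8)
The plan is to check directly that the Cartesian symmetric monoidal structure on $\cA^G$ restricts along the inclusion $\cA^G_\cO \hookrightarrow \cA^G$. Recall that this structure sends a pair of spans $[S_1 \leftarrow U_1 \to T_1]$ and $[S_2 \leftarrow U_2 \to T_2]$ to their componentwise product $[S_1 \times S_2 \leftarrow U_1 \times U_2 \to T_1 \times T_2]$, which is a well-defined bifunctor because the Cartesian product commutes with pullbacks in $\Set^G$. Since $\cA^G_\cO$ is wide, it contains every finite $G$-set and in particular the monoidal unit $\ast$, so at the level of objects the monoidal structure clearly restricts.

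The only substantive point is closure on morphisms. A morphism of $\cA^G_\cO$ is a span whose right (``transfer'') leg lies in $\cO$, so given two such morphisms with right legs $b_1, b_2 \in \cO$, the right leg of their monoidal product is $b_1 \times b_2$. This lies in $\cO$ by Lemma~\ref{lem:ClosureUnderProducts}, so the product morphism again lies in $\cA^G_\cO$ and the bifunctor $\times$ restricts as desired.

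It remains to observe that the coherence data all lie in the subcategory. The associator, the left and right unitors, and the symmetry are each represented in $\cA^G$ by a span of the form $[S \xleftarrow{=} S \xrightarrow{\phi} T]$ with $\phi$ an isomorphism of $G$-sets; since an indexing category is wide it contains every isomorphism, so each such structural morphism lies in $\cA^G_\cO$. Assembling these points exhibits $\cA^G_\cO$ as a symmetric monoidal subcategory of $\cA^G$. I expect no genuine obstacle here: the real content is Lemma~\ref{lem:ClosureUnderProducts}, and the only thing to keep straight is that the product on the span category acts componentwise on the legs, after which the verification is purely formal, which is exactly why the statement is phrased as a corollary.
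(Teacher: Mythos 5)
Your proof is correct and matches the paper's argument: the paper likewise derives the corollary directly from Lemma~\ref{lem:ClosureUnderProducts}, since the Cartesian product of spans acts componentwise and the only constrained leg is the right one. Your extra remarks about the unit and the coherence isomorphisms are fine (the paper itself notes that isomorphisms lie in any indexing category) but add nothing beyond what the paper treats as formal.
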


The box product on Mackey functors is the Day convolution product of
the Cartesian product in \(\cA^{G}\) with the tensor product on \(\Ab\)
\cite{Day}.  We make an analogous definition for incomplete Mackey
functors.

\begin{definition}
    The box product on \(\cO\)-Mackey functors is the left Kan extension of the tensor product along the Cartesian product: given \(\mM\) and \(\mN\), the box product is defined by
    \[
        \begin{tikzcd}
            {\cA^G_{\cO}\times\cA^G_{\cO}}
                \ar[d, "\times"']
                \ar[r, "{\mM\otimes\mN}"]
                &
            {\Ab}
                \\
            {\cA^G_{\cO}}
                \ar[ur, "{\mM\Box\mN}"']
        \end{tikzcd}
    \]
\end{definition}

\begin{remark}
Strickland shows a very important generalization of this: we
can actually take the left Kan extension not of the tensor product to
\(\Ab\) but rather the Cartesian product to \(\Set\). 
    
This is essential for Tambara functors: Any product
preserving functor from \(\cP^G\) to \(\Ab\) is necessarily zero,
since a multiplication that is both linear and bilinear must be zero.
\end{remark}

The definition as the left Kan extension gives a universal property of this incomplete box product completely analogous to the usual one for the box product: a map of \(\cO\)-Mackey functors
\[
\mM\Box\mN\to\m{P}
\]
is the same data as a natural transformation of functors on \(\cA^{\times 2}_{\cO}\)
\[
\mM(\mhyphen)\otimes\mN(\mhyphen)\rightarrow\m{P}(\mhyphen\times\mhyphen).
\]
We note there is a slight wrinkle with the usual Frobenius relation:
the equations
\[
a\otimes T_f(b)=T_f\big(R_f(a)\otimes b\big)\text{ and }T_f(b)\otimes a=T_f\big(b\otimes R_f(a)\big)
\]
only make sense when \(f\) is a map in \(\cO\). When \(\cO=\cO^{tr}\), then any map preserves isotropy and the Frobenius relation just expresses that the tensor is bilinear.
\begin{example}
    If \(\cO=\cO^{tr}\) is the initial indexing category of Example~\ref{exam:Otr}, then the box product is just the ordinary levelwise tensor product. 
\end{example}

General properties of the Day convolution product show that this is a symmetric monoidal product on Mackey functors here too. 
\begin{proposition}
    The box product is a symmetric monoidal product on \(\cO\)-Mackey functors with unit the \(\cO\)-Burnside Mackey functor.
\end{proposition}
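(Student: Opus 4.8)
The plan is to recognize the box product as an instance of Day convolution \cite{Day} and to import its symmetric monoidal structure wholesale from that construction, leaving only two points that are special to cutting down from the full functor category to \(\cO\)-Mackey functors. By Corollary~\ref{cor:AOSymMonoid} the pair \((\cA^G_{\cO},\times)\) is a symmetric monoidal category, and \(\Ab\) is a bicomplete symmetric monoidal category whose tensor product preserves colimits separately in each variable. These are exactly the hypotheses of Day's theorem, so the functor category \(\mathrm{Fun}(\cA^G_{\cO},\Ab)\) acquires a symmetric monoidal structure whose tensor is precisely the left Kan extension defining \(\Box\). In particular, the associativity, unit, and symmetry isomorphisms and all of their coherences are produced by the general machine and require no separate verification; the universal property recorded just before the statement is nothing but the defining hom-adjunction of this convolution.

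First I would identify the unit. The Day unit is the functor corepresented by the monoidal unit of \((\cA^G_{\cO},\times)\), which is the terminal \(G\)-set \(\ast = G/G\). The representable \(\cA^G_{\cO}(\ast,-)\) is, by the definition of the \(\cO\)-Burnside Mackey functor, the semi-Mackey functor whose group completion is \(\mA_{\cO}\); hence the unit of \(\Box\) on Mackey functors is \(\mA_{\cO}\).

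Second, and this is the only genuinely non-formal step, I would check that \(\Box\) restricts to the subcategory of \(\cO\)-Mackey functors, i.e.\ that \(\mM\Box\mN\) is again product-preserving whenever \(\mM\) and \(\mN\) are. Writing the convolution as the coend
\[
(\mM\Box\mN)(X)=\int^{(Y,Z)}\cA^G_{\cO}(Y\times Z,X)\otimes\mM(Y)\otimes\mN(Z),
\]
the key input is that disjoint union is the biproduct in \(\cA^G_{\cO}\), so each representable is additive in its free variable: \(\cA^G_{\cO}(W,X_1\amalg X_2)\cong\cA^G_{\cO}(W,X_1)\oplus\cA^G_{\cO}(W,X_2)\). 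Since a coend is a colimit and finite biproducts commute with colimits in an additive category, the splitting in the variable \(X\) passes through the coend, giving \((\mM\Box\mN)(X_1\amalg X_2)\cong(\mM\Box\mN)(X_1)\oplus(\mM\Box\mN)(X_2)\). Thus \(\mM\Box\mN\) is product-preserving, and the same computation shows the unit \(\mA_{\cO}\) is product-preserving, so both lie in \(\cO\)-Mackey functors as required.

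I expect the main obstacle to be exactly this closure step. Although it is formally parallel to the classical Mackey functor case, one must confirm that the only features it uses — that \((\cA^G_{\cO},\times)\) remains symmetric monoidal and that \(\amalg\) is still a biproduct after passing to an arbitrary indexing category — genuinely survive in the incomplete setting. Both are supplied here: the symmetric monoidal structure by Corollary~\ref{cor:AOSymMonoid} (ultimately by Lemma~\ref{lem:ClosureUnderProducts}), and the biproduct structure by the observation that disjoint union is the biproduct in \(\cA^G_{\cO}\). Consequently no new equivariant input beyond these is needed, and the remainder of the argument is the formal Day-convolution package.
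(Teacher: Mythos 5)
Your proposal is correct and takes the same route as the paper, which at this point simply asserts that ``general properties of the Day convolution product'' give the result; you have filled in the details the paper leaves implicit, namely the identification of the unit as the representable at \(\ast=G/G\) (whose group completion is \(\mA_{\cO}\) by definition) and the closure of product-preserving functors under \(\Box\) via the biproduct structure on \(\cA^G_{\cO}\). The one point to keep explicit is that your coend must be formed in the \(\Ab\)-enriched (group-completed) Burnside category rather than the \(\Set\)-enriched one, since it is the direct-sum decomposition \(\cA^G_{\cO}(W,X_1\amalg X_2)\cong\cA^G_{\cO}(W,X_1)\oplus\cA^G_{\cO}(W,X_2)\) of hom \emph{groups}, distributed over by \(\otimes\), that makes the splitting pass through the coend --- the corresponding product decomposition of hom \emph{sets} would not.
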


\begin{definition}
    An \(\cO\)-Green functor is a commutative monoid for the box product in \(\cO\)-Mackey functors.
\end{definition}

\subsection{Colimits and limits}
Strickland's careful treatment of the complete Tambara case actually goes through without change! At no point does Strickland use that we have transfers and norms for all maps in \(\Set^G\), using instead the compatibility relations needed for particular given maps.

For coproducts, he uses that we can formally create the ``norm of a
transfer'' in the box product by using the corresponding exponential
diagram.  We have to check that in both cases, we are working entirely in our restricted subcategory \(\cP^G_{\cO_a,\cO_m}\).

\begin{proposition}[{\cite[Lemma 9.8]{Strickland}}]
    The coproduct of \((\cO_a,\cO_m)\)-Tambara functors is the box product for \(\cO_a\)-Mackey functors.
\end{proposition}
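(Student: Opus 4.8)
The plan is to follow Strickland's proof of \cite[Lemma 9.8]{Strickland} essentially verbatim, constructing the coproduct $\mM\sqcup\mN$ of two $(\cO_a,\cO_m)$-Tambara functors by equipping the box product $\mM\Box\mN$ of their underlying $\cO_a$-Mackey functors with the structure of an $(\cO_a,\cO_m)$-Tambara functor, and then verifying its universal property. The only point requiring genuine attention in the bi-incomplete setting is that every map produced by Strickland's construction lands in the restricted subcategory $\cP^G_{\cO_a,\cO_m}$; this is exactly what compatibility is designed to guarantee.

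First I would recall that, since $\mM$ and $\mN$ are in particular $\cO_a$-Green functors, the symmetric monoidal box product $\mM\Box\mN$ already carries a canonical commutative monoid structure, supplying the additive structure and the non-normed multiplication. The real work is in defining the norm maps $N_g$ for $g\in\cO_m$. Following Strickland, these are built from the multiplicativity of norms on the external-product generators of the box product; the essential subtlety is that $\mM\Box\mN$ is also generated by transferred classes, so that norming a transfer $T_h$ forces one to rewrite $N_g\circ T_h$ by means of the Tambara reciprocity formula $N_g\circ T_h=T_{h'}\circ N_{g'}\circ R_{f'}$ attached to the corresponding exponential diagram (Definition~\ref{def:DepProd}).

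The key, and only new, verification is that this formula never leaves $\cP^G_{\cO_a,\cO_m}$. Here $g\colon T\to U$ lies in $\cO_m$ and $h\colon S\to T$ is a transfer in $\cO_a$, and one must check that the transfer $T_{h'}$ and norm $N_{g'}$ appearing on the right are again admissible, i.e.\ that $h'\in\cO_a$ and $g'\in\cO_m$. Since $h'=\Pi_g(h)$ is the dependent product of $h$ along $g$, compatibility of $(\cO_a,\cO_m)$ (Definition~\ref{def:Compatible}) says precisely that $\Pi_g$ carries $(\cO_a)_{/T}$ into $(\cO_a)_{/U}$, whence $h'\in\cO_a$; and $g'$ is the pullback of $g$ along $h'$, so it lies in $\cO_m$ by pullback stability. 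This is exactly the closure check already carried out in the proof of Theorem~\ref{thm:Compatible}, so all of Strickland's diagram chasing takes place inside $\cP^G_{\cO_a,\cO_m}$ and goes through unchanged.

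The main obstacle is the well-definedness and coherence of the resulting norm maps: one must check that each $N_g$ respects the Frobenius relations defining the box product, is independent of how a class is presented as a combination of external products and transfers, and that the $N_g$ together with the Green structure satisfy all the Tambara axioms. This is handled precisely as in Strickland, since his verifications are local, each invoking only the reciprocity relations for the particular maps at hand rather than the existence of all transfers and norms, and the previous paragraph shows every such local relation remains available in the bi-incomplete category. Having produced the Tambara structure, the universal property identifying $\mM\Box\mN$ with the coproduct follows from the universal property of the box product as a Day convolution, exactly as in the complete case; restricting along the inclusion $\cA^G_{\cO_a}\hookrightarrow\cP^G_{\cO_a,\cO_m}$ of polynomials whose norm is along an isomorphism then recovers the statement that the underlying $\cO_a$-Mackey functor of the coproduct is $\mM\Box\mN$.
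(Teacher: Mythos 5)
Your proposal is correct and takes essentially the same route as the paper: the authors likewise defer to Strickland's Lemma 9.8 and isolate, as the only new verification, that the ``norm of a transfer'' formed via the exponential diagram stays inside \(\cP^G_{\cO_a,\cO_m}\), which is exactly the compatibility check (\(h'=\Pi_g(h)\in\cO_a\) by Definition~\ref{def:Compatible}, \(g'\in\cO_m\) by pullback stability) that you spell out. Your write-up is in fact more explicit than the paper's, which leaves this check as a remark preceding the proposition.
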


For coequalizers, Strickland works very generally with relations in a
wide collection of algebraic structures (\cite[Definition
  10.4]{Strickland}).  

\begin{proposition}[{\cite[Proposition 10.5]{Strickland}}]
Coequalizers exist in \((\cO_a,\cO_m)\)-Tambara functors.
\end{proposition}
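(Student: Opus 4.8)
The plan is to follow Strickland's general construction of coequalizers and to check that it never leaves the restricted category $\cP^G_{\cO_a,\cO_m}$. The starting observation is that an $(\cO_a,\cO_m)$-Tambara functor is exactly a model --- a product-preserving, abelian-group-valued functor --- for the algebraic theory encoded by $\cP^G_{\cO_a,\cO_m}$, whose operations are the restrictions $R_f$ for arbitrary $f$, the norms $N_g$ for $g\in\cO_m$, the transfers $T_h$ for $h\in\cO_a$, and the levelwise semiring operations supplied by Proposition~\ref{prop:MonoidStructure}. The coequalizer of a parallel pair will be built, as is standard for such algebraic theories, by quotienting the target by the congruence generated by the two maps.

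Concretely, given $\alpha,\beta\colon\mR\to\m{S}$, I would form on each finite $G$-set $T$ the quotient of $\m{S}(T)$ by the smallest congruence containing every pair $(\alpha(x),\beta(x))$ with $x\in\mR(T)$ and closed under all operations of the theory. This is precisely the relation-theoretic setup of \cite[Definition~10.4]{Strickland}: one enlarges the relation by applying $R_f$, $N_g$, $T_h$, addition, and multiplication to pairs already present, iterating to a fixed point. The resulting quotient inherits its structure maps from $\m{S}$ and satisfies the universal property of the coequalizer in $(\cO_a,\cO_m)\mhyphen\Tamb$ by inspection.

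The only place where bi-incompleteness could bite --- and hence the main point to verify --- is that this closure process never requires an operation outside $\cP^G_{\cO_a,\cO_m}$. Strickland's normalization of composites invokes the interchange laws relating $R$, $N$, and $T$: pullback compatibility of $R$ with the norms and transfers, and the distributive law of $N_g$ over $T_h$ recorded by an exponential diagram. Each such rewrite introduces auxiliary norms and transfers, but only those named by the relevant pullback or exponential diagram. As stressed in the discussion preceding the proposition, these are exactly the operations already forced by the given $g$ and $h$: the auxiliary transfer $h'$ is the dependent product of $h$ along $g$, hence in $\cO_a$ precisely because $(\cO_a,\cO_m)$ is compatible in the sense of Definition~\ref{def:Compatible}, while the auxiliary norm $g'$ is a pullback of $g$, hence again in $\cO_m$. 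This is the content of Theorem~\ref{thm:Compatible}, which already guarantees that $\cP^G_{\cO_a,\cO_m}$ is closed under the composites appearing in Strickland's argument. Consequently the congruence closure stays within the theory, the quotient is a genuine $(\cO_a,\cO_m)$-Tambara functor, and it is the desired coequalizer; the remaining verifications are identical to Strickland's and require no change.
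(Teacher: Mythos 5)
Your proposal is correct and matches the paper's treatment: the paper likewise simply imports Strickland's congruence-theoretic construction of coequalizers (his Definition~10.4 and Proposition~10.5) and observes that the only thing to check is that the closure process never leaves $\cP^G_{\cO_a,\cO_m}$, which is exactly the content of Theorem~\ref{thm:Compatible} as you say. Your write-up just makes explicit the verification that the paper leaves as a remark, so no substantive difference.
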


Since filtered colimits commute with finite products in \(\Set\) and since
\((\cO_a,\cO_m)\)-Tambara functors are a full subcategory of a diagram
category defined in terms of a product condition, we can deduce the
existence of filtered colimits~\cite[Proposition 10.2]{Strickland}.
Putting this all together, we conclude that all colimits exist.

\begin{theorem}
The category of \((\cO_a,\cO_m)\)-Tambara functors is cocomplete.
\end{theorem}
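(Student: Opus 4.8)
The plan is to assemble the three immediately preceding results — existence of binary coproducts (the box product of underlying \(\cO_a\)-Mackey functors), existence of coequalizers, and existence of filtered colimits — into the full statement by invoking the standard categorical criterion that a category possessing all small coproducts and all coequalizers is cocomplete. All of the genuinely delicate work, namely checking that each of these constructions can be carried out inside the restricted subcategory \(\cP^G_{\cO_a,\cO_m}\) rather than merely in the ambient \(\cP^G\), has already been done following Strickland; what remains is purely formal and amounts to bookkeeping.

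First I would upgrade binary coproducts to all small coproducts, which is precisely where filtered colimits are needed. Binary coproducts give all finite coproducts by induction, and the empty coproduct is the initial object, namely the \(\cO_a\)-Burnside functor serving as the box-product unit. Given an arbitrary small family \(\{\mR_i\}_{i\in I}\), one writes the coproduct as the filtered colimit
\[
\coprod_{i\in I}\mR_i \;=\; \operatorname*{colim}_{F\subseteq I\text{ finite}}\;\coprod_{i\in F}\mR_i
\]
over the filtered poset of finite subsets of \(I\), with transition maps the evident inclusions of summands. Since finite coproducts and filtered colimits both exist, every small coproduct exists.

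Finally I would realize an arbitrary small colimit as a coequalizer of a pair of maps between coproducts. For a diagram \(F\colon J\to(\cO_a,\cO_m)\mhyphen\Tamb\), form the canonical presentation
\[
\coprod_{(\alpha\colon j\to j')\in\operatorname{Mor}(J)} F(j)\;\rightrightarrows\;\coprod_{j\in\operatorname{Ob}(J)} F(j),
\]
where the top arrow sends the \(\alpha\)-summand by \(F(\alpha)\) and the bottom arrow by the identity inclusion; the coequalizer of this pair is \(\operatorname{colim}_J F\). Because all small coproducts and all coequalizers now exist, this colimit exists, and hence the category is cocomplete. The only point requiring genuine care — and the closest thing to an obstacle — is not the abstract colimit formula but the verification that these infinite constructions remain inside \((\cO_a,\cO_m)\mhyphen\Tamb\): one must confirm that the filtered-colimit description of an infinite coproduct is compatible with the product-preservation condition defining Tambara functors, and that the initial object is genuinely present so the empty diagram has a colimit. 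Both follow from the fact that all these colimits are computed on the underlying \(\cO_a\)-Mackey functors, where filtered colimits commute with the finite products appearing in the defining condition and the relevant universal properties are transparent.
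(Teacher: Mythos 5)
Your proof is correct and follows essentially the same route as the paper: the paper establishes binary coproducts (via the box product), coequalizers, and filtered colimits, and then concludes cocompleteness by exactly the standard assembly you spell out. You have simply made explicit the formal bookkeeping the paper compresses into ``putting this all together.''
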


Since the categories of bi-incomplete semi-Tambara functors are diagram categories in a complete category, they are automatically complete.

\begin{theorem}
For any compatible pair \((\cO_a,\cO_m)\), the category of
\((\cO_a,\cO_m)\)-Tambara functors is complete, with limits formed
objectwise. 
\end{theorem}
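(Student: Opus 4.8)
The plan is to realize \((\cO_a,\cO_m)\mhyphen\Tamb\) as a full subcategory of the category of all functors \(\cP^G_{\cO_a,\cO_m}\to\Set\), and to show that the two defining conditions—product preservation and abelian-group valuedness—are each stable under objectwise limits. Since \(\Set\) is complete, the functor category is complete with limits computed objectwise. The first observation is that product preservation is inherited: if each functor in a diagram \(\{\mR_i\}\) of semi-Tambara functors sends the categorical product (disjoint union, by Proposition~\ref{prop:ProductsExist}) to a product of sets, then so does the objectwise limit, because limits commute with limits. This is precisely the reasoning that already yields completeness of the semi-Tambara functors, with limits formed objectwise.

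To upgrade to Tambara functors, I would first record that the additive abelian-monoid structure on the values of a semi-Tambara functor is determined functorially, as in Proposition~\ref{prop:MonoidStructure}: for each finite \(G\)-set \(T\) the addition on \(\mR(T)\) is obtained by applying \(\mR\) to the transfer \(T_\nabla\) along the fold map \(\nabla\colon T\amalg T\to T\) (a morphism of \(\cP^G_{\cO_a,\cO_m}\), since every indexing category contains the fold maps) and composing with the product-preservation isomorphism \(\mR(T\amalg T)\cong\mR(T)\times\mR(T)\). Because this description is phrased entirely in terms of the functor \(\mR\), every map of semi-Tambara functors is automatically additive; in particular the transition maps of the diagram and the limit projections \(\lim_i\mR_i\to\mR_i\) are additive.

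It follows that the objectwise limit \(\mR=\lim_i\mR_i\) carries, on each \(\mR(T)=\lim_i\mR_i(T)\), exactly the limit commutative-monoid structure. I would then invoke the key closure property: a limit of abelian groups, formed in sets and equipped with its induced monoid structure, is again an abelian group, since inverses are computed coordinatewise and the inversion map is compatible with the transition maps of the diagram. Hence if every \(\mR_i\) is valued in abelian groups, so is \(\mR\); that is, \(\mR\) is again an \((\cO_a,\cO_m)\)-Tambara functor. As the inclusion of Tambara functors into semi-Tambara functors is full and faithful and the objectwise limit lands in this subcategory, the forgetful functor creates the limit, so \(\mR\) satisfies the required universal property. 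This proves the category is complete, with limits formed objectwise.

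The argument is almost entirely formal, so the step I would treat as the main obstacle is the verification that ``being valued in abelian groups'' is a limit-closed \emph{property} rather than extra structure that must be chosen. This is exactly what the functorial description via the fold maps guarantees: the monoid operation on \(\mR(T)\) is not selected but is dictated by \(\mR(T_\nabla)\), so it is automatically compatible with every map in the diagram and with the limiting cone, and the grouplikeness of each \(\mR_i(T)\) then passes to the limit without any further hypotheses on the pair \((\cO_a,\cO_m)\) beyond compatibility.
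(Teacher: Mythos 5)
Your proposal is correct and follows essentially the same route as the paper: realize \((\cO_a,\cO_m)\mhyphen\Tamb\) inside the functor category \([\cP^G_{\cO_a,\cO_m},\Set]\), form limits objectwise, and note that product preservation survives because limits commute with limits. You are in fact more careful than the paper on one point -- the paper's argument only addresses the semi-Tambara (product-preserving) condition explicitly, whereas you also verify that abelian-group-valuedness is a limit-closed property via the functorially determined addition \(\mR(T_\nabla)\); that extra check is a genuine (if routine) part of a complete argument and is handled correctly.
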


\subsection{Forgetful Functors}

Inclusions of indexing categories naturally give rise to inclusions of
the corresponding wide subgraphs of polynomials. We only want to
consider the case where the pair of indexing categories is compatible, so we
focus on inclusions here. 

\begin{definition}
    Let \((\cO_a,\cO_m)\) and \((\cO_a',\cO_m')\) be two compatible pairs of indexing categories. We will write
    \[
        (\cO_a,\cO_m)\subseteq (\cO_a',\cO_m')
    \]
    and say that we have an inclusion of pairs
    if we have (not necessarily proper) inclusions 
    \[
        \cO_a\subseteq \cO_a'\text{ and }\cO_m\subseteq\cO_m'.
    \]
\end{definition}

The following is immediate from the definitions.

\begin{proposition}
	If \((\cO_a,\cO_m)\subseteq (\cO_a',\cO_m')\) is an inclusion of compatible pairs of indexing categories, then the natural inclusion
	\[
		\cP^{G}_{\cO_a,\cO_m}\hookrightarrow \cP^{G}_{\cO_a',\cO_m'}
	\]
	is a product-preserving functor.
\end{proposition}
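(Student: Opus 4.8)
The plan is to verify directly that the inclusion of subgraphs is a well-defined functor and then that it carries the product cone in the source to the product cone in the target. First I would observe that both \(\cP^{G}_{\cO_a,\cO_m}\) and \(\cP^{G}_{\cO_a',\cO_m'}\) are genuine subcategories of \(\cP^G\) by Theorem~\ref{thm:Compatible}, since both pairs are assumed compatible. They share the same objects (all finite \(G\)-sets) as \(\cP^G\), so the inclusion is the identity on objects. On morphisms, a representative polynomial \(X\leftarrow S\xrightarrow{g}T\xrightarrow{h}Y\) lies in \(\cP^{G}_{\cO_a,\cO_m}\) exactly when \(g\in\cO_m\) and \(h\in\cO_a\); since \(\cO_m\subseteq\cO_m'\) and \(\cO_a\subseteq\cO_a'\), these same conditions give \(g\in\cO_m'\) and \(h\in\cO_a'\), so every morphism of \(\cP^{G}_{\cO_a,\cO_m}\) is a morphism of \(\cP^{G}_{\cO_a',\cO_m'}\). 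Thus the inclusion is well-defined on the underlying graphs.

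Next I would check functoriality. Identity morphisms are the polynomials \(X\leftarrow X\xrightarrow{\id}X\xrightarrow{\id}X\), whose \(g\) and \(h\) are identities and hence lie in every indexing category; they are therefore carried to identities. For composition, the key point is that because both \(\cP^{G}_{\cO_a,\cO_m}\) and \(\cP^{G}_{\cO_a',\cO_m'}\) are subcategories of the ambient \(\cP^G\), the composite of two morphisms is computed by the very same rule (the composition law of \(\cP^G\)) in both; so the inclusion commutes with composition automatically. This produces a functor.

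Finally, for product preservation I would invoke Proposition~\ref{prop:ProductsExist}: in each of the two categories the disjoint union \(S\amalg T\) is the categorical product, with projections the maps \(\pi_S\) and \(\pi_T\). These projections are built entirely from identity maps, so they lie in every indexing category and in particular are literally the same morphisms in both \(\cP^{G}_{\cO_a,\cO_m}\) and \(\cP^{G}_{\cO_a',\cO_m'}\). Hence the inclusion sends the product cone \((S\amalg T,\pi_S,\pi_T)\) in the source to the identical product cone in the target, which is precisely the assertion that it preserves finite products. There is no genuine obstacle here: everything is forced by the compatibility hypothesis (through Theorem~\ref{thm:Compatible} and Proposition~\ref{prop:ProductsExist}) together with the two inclusions \(\cO_a\subseteq\cO_a'\), \(\cO_m\subseteq\cO_m'\). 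The only point one must note with mild care is that composition agrees in the two categories, which is automatic once both are realized as subcategories of \(\cP^G\).
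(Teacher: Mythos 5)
Your argument is correct and is exactly the unpacking of what the paper dismisses as ``immediate from the definitions'': both subgraphs are honest subcategories of \(\cP^G\) by Theorem~\ref{thm:Compatible}, the inclusion of indexing categories forces the inclusion of morphisms, and the product cones of Proposition~\ref{prop:ProductsExist} are literally the same in source and target. Nothing in your write-up deviates from the intended reasoning.
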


These inclusions give us ``forgetful functors'', since the composite
of product preserving functors is product preserving:

\begin{proposition}
If \((\cO_a,\cO_m)\subseteq (\cO_a',\cO_m')\) is an inclusion of compatible pairs of indexing categories, then precomposition with the inclusion of polynomials gives a forgetful functor
    \[
        (\cO_a',\cO_m')\mhyphen\Tamb
        \to
        (\cO_a,\cO_m)\mhyphen\Tamb.
    \] 
%
\end{proposition}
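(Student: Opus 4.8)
The plan is to verify that precomposition with the inclusion of polynomial categories carries an $(\cO_a',\cO_m')$-Tambara functor to an $(\cO_a,\cO_m)$-Tambara functor, and that this assignment is itself functorial on the morphisms (natural transformations). Write $\iota\colon \cP^{G}_{\cO_a,\cO_m}\hookrightarrow \cP^{G}_{\cO_a',\cO_m'}$ for the inclusion, which the preceding proposition guarantees is product-preserving.

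First I would treat the semi-Tambara level. Given an $(\cO_a',\cO_m')$-Tambara functor $\mR\colon \cP^G_{\cO_a',\cO_m'}\to\Set$, the composite $\mR\circ\iota$ is a functor $\cP^G_{\cO_a,\cO_m}\to\Set$; since a composite of product-preserving functors is again product-preserving, $\mR\circ\iota$ is an $(\cO_a,\cO_m)$-semi-Tambara functor. Next I would upgrade this to the abelian-group-valued condition. The inclusion $\iota$ is the identity on objects—both polynomial categories have all finite $G$-sets as objects and differ only in which morphisms are allowed—so for every finite $G$-set $T$ we have $(\mR\circ\iota)(T)=\mR(T)$, which is an abelian group by hypothesis on $\mR$. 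Hence $\mR\circ\iota$ is a genuine $(\cO_a,\cO_m)$-Tambara functor in the sense of Definition~\ref{def:OaOmTamb}.

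Finally, for a map $\eta\colon\mR\Rightarrow\mR'$ of $(\cO_a',\cO_m')$-Tambara functors, I would whisker with $\iota$ to obtain the natural transformation $\eta\iota\colon \mR\circ\iota\Rightarrow\mR'\circ\iota$; whiskering respects identities and composition, so this defines the required functor $(\cO_a',\cO_m')\mhyphen\Tamb\to(\cO_a,\cO_m)\mhyphen\Tamb$. There is essentially no obstacle here: the only point that repays a moment's care is confirming that the composite lands among genuine Tambara functors rather than merely semi-Tambara functors, and this is immediate precisely because $\iota$ leaves objects unchanged, so no group-completion is needed.
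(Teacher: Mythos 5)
Your argument is correct and matches the paper's: the paper justifies this proposition in one line by noting that the composite of product-preserving functors is product-preserving, which is exactly your main step, and the points you add (the inclusion is the identity on objects, so the abelian-group condition is automatic; whiskering handles morphisms) are the routine details the paper leaves implicit.
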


Since limits are computed objectwise, the forgetful functors all commute with limits. Since the categories of \((\cO_a,\cO_m)\)-Tambara functors are cocomplete diagram categories, we further deduce the existence of left-adjoints.

\begin{proposition}
    The forgetful functors
    \[
	(\cO_{a}',\cO_{m}')\mhyphen\Tamb \to
	(\cO_{a},\cO_{m})\mhyphen\Tamb.
	\]
	has a left-adjoint, the corresponding free functor.
\end{proposition}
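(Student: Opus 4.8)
The plan is to invoke the general adjoint functor theorem. Write $U\colon(\cO_a',\cO_m')\mhyphen\Tamb\to(\cO_a,\cO_m)\mhyphen\Tamb$ for the functor in question; by the previous proposition it is precomposition with the product-preserving inclusion $\iota\colon\cP^G_{\cO_a,\cO_m}\hookrightarrow\cP^G_{\cO_a',\cO_m'}$. We have already seen that the source category $(\cO_a',\cO_m')\mhyphen\Tamb$ is complete, with limits formed objectwise, and that $U$ commutes with those limits. Thus the only hypothesis of the adjoint functor theorem still to be supplied is a solution-set condition, after which the left adjoint $F\dashv U$ exists and is by definition the free functor.

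The cleanest way to produce the solution set is to record that both categories are locally presentable. Indeed, an $(\cO_a',\cO_m')$-semi-Tambara functor is exactly a model in $\Set$ of the multisorted finitary algebraic theory whose sorts are the finite $G$-sets and whose operations are the generating morphisms $R_f$, $N_g$, $T_h$ together with the commutative-semiring structure on each level supplied by Proposition~\ref{prop:MonoidStructure}; passing to abelian-group values is the group-completion, a reflective localization. Since product-preservation identifies the value on a general finite $G$-set with the product of the values on its orbits, such a functor is determined by its values on the (finitely many, up to isomorphism) orbits $G/H$. Hence $(\cO_a',\cO_m')\mhyphen\Tamb$ is the category of algebras for a finitary monad on a finite product of copies of $\Set$ (or $\Ab$), one factor per conjugacy class of subgroups of $G$, and is therefore locally presentable, and likewise for the target. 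I would then verify that $U$ is accessible: filtered colimits in either category are computed objectwise, since filtered colimits commute with finite products in $\Set$ (the same observation used above to construct filtered colimits), and $U$ acts on underlying objects by restriction along $\iota$, so $U$ preserves filtered colimits. A limit-preserving accessible functor between locally presentable categories has a left adjoint, giving the claim.

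If one prefers to stay elementary, the solution set can be exhibited directly: any morphism $\mM\to U(\mR)$ in $(\cO_a,\cO_m)\mhyphen\Tamb$ factors through the smallest sub-$(\cO_a',\cO_m')$-Tambara functor of $\mR$ containing the image of $\mM$, and because all operations are finitary this subobject has cardinality bounded by a fixed function of $G$ and the cardinalities of the sets $\mM(G/H)$; a set of representatives for the isomorphism classes of such bounded functors is then a solution set. The main obstacle is therefore not the adjoint functor theorem itself but making the finitary-algebraic description precise, namely identifying $\cP^G_{\cO_a',\cO_m'}$ with (the opposite of) a Lawvere-type theory and checking that product-preservation cuts out exactly its models. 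Once that identification is in hand everything else, namely that $U$ preserves limits and filtered colimits, has essentially already been carried out.
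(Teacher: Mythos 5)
Your argument is correct, and it arrives at the adjoint by the same general mechanism (an adjoint functor theorem applied to a limit-preserving forgetful functor), but you discharge the smallness hypothesis differently from the paper. The paper's proof is two sentences: since the forgetful functor commutes with the objectwise limits, it invokes the adjoint functor theorem in the form that requires a small set of projective generators, and then exhibits a single such generator, namely the group completion of the representable functor \(\cP^G_{\cO_a,\cO_m}(T,\mhyphen)\) for \(T=\coprod_{H\subset G}G/H\), projectivity and generation both following from the Yoneda lemma together with product-preservation. You instead verify the solution-set condition of the general adjoint functor theorem, either by identifying \((\cO_a',\cO_m')\mhyphen\Tamb\) as (a reflective, filtered-colimit-closed subcategory of) the models of a multisorted finitary Lawvere theory --- note that \(\cP^G_{\cO_a',\cO_m'}\) is itself the theory, not its opposite, since semi-Tambara functors are covariant product-preserving functors out of it --- and checking that the forgetful functor is accessible, or by a direct cardinality bound on generated subobjects. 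Both routes are sound; what the paper's buys is brevity and an explicit projective generator (the free bi-incomplete Tambara functor on one generator at each orbit, which is independently useful), while yours buys the stronger and reusable conclusion that these categories are locally presentable, without having to invoke the generator-based form of the theorem. One small caution: your finitary-algebraic description should not treat the levelwise semiring operations as extra structure beyond the morphisms of \(\cP^G_{\cO_a',\cO_m'}\); by Proposition~\ref{prop:MonoidStructure} they are already encoded by transfers and norms along fold maps, so no additional operations need to be adjoined to the theory.
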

\begin{proof}
    Since the forgetful functor commutes with limits, by the adjoint functor theorem, it suffices to show that we have a small set of projective generators. However, when 
    \[
    T=\coprod_{H\subset G} G/H,
    \]
    then the Yoneda lemma shows that the (group completion) of the representable functor \(\cP^G_{\cO_a,\cO_m}(T,\mhyphen)\) is a projective generator.
\end{proof}

Conceptually, these free functors freely adjoin any transfers and norms parameterized by \((\cO_a',\cO_m')\) but not in \((\cO_a,\cO_m)\). 

\begin{example}
    Let \(\cO_a=\cO_m=\cO^{tr}\). An \((\cO_a,\cO_m)\)-Tambara functor here is just a coefficient system of commutative rings. Since \(\cO^{tr}\) is the initial indexing system, for any compatible pair \((\cO_a,\cO_m)\), we have an inclusion
    \[
    (\cO^{tr},\cO^{tr})\subseteq (\cO_a,\cO_m).
    \]
    The corresponding forgetful functor just records the underlying coefficient system of commutative rings. 
    
    For corresponding left-adjoint, it is helpful to factor the inclusion into two steps. The pair \((\cO_a,\cO^{tr})\) is always compatible (see Theorem~\ref{thm:OaOtrCompatible}), we have inclusions of compatible pairs
    \[
    (\cO^{tr},\cO^{tr})\subseteq (\cO_a,\cO^{tr})\subseteq (\cO_a,\cO_m).
    \]
    The left adjoint for first inclusion creates the free \(\cO_a\)-Green functor, putting in all of the missing transfers in algebras and enforcing the Frobenius relation. The left adjoint for the second inclusion then freely puts in the norms.
\end{example}

It is worth noting here that this is the only order that works in general. If \(\cO_m\) is non-trivial, then a consequence of Corollary~\ref{cor:OaAtLeastOm} below is that \((\cO^{tr},\cO_m)\) is never compatible. In other words, we had to put in the missing transfers, and then we can put in the missing norms.

\subsubsection*{Underlying incomplete Mackey functors}

The inclusions here make sense also for more general wide, pullback
stable subcategories like \(\Set^G_{\cong}\), as in
Proposition~\ref{prop:Isos}. Here, we need to also note that Proposition~\ref{prop:ProductsExist} only used the wideness of the categories structuring the norms and transfers, since it used only the identity maps there. Using these more general forgetful functors, we
can talk about ``underlying'' structures.

\begin{example}
Let \(\cO_a\) and \(\cO_m\) be compatible indexing categories. Then for any \((\cO_a,\cO_m)\)-Tambara functor \(\mR\), we have
    \begin{enumerate}
        \item an underlying additive \(\cO_a\)-Mackey functor and
        \item an underlying multiplicative \(\cO_m\)-semi-Mackey functor
    \end{enumerate}
    which arise from the inclusions
    \[
    (\cO_a,\Set^G_{\cong})\subseteq (\cO_a,\cO_m)\supseteq (\Set^G_{\cong},\cO_m).
    \]
\end{example}

The forgetful functor to the underlying additive \(\cO_a\)-Mackey functor also has a left adjoint: this is the bi-incomplete version of the symmetric algebra. The forgetful functor to the underlying multiplicative \(\cO_m\)-semi-Mackey functor is a little stranger, but it also has a left adjoint. This is a bi-incomplete version of Nakaoka's ``Tambarization of a semi-Mackey functor'' \cite[Theorem 2.12]{NakaokaTamb}.

\section{Rewriting compatibility}\label{sec:ReWrite}

The purpose of this section is to provide alternate conditions for
compatibility that are easier to check in practice.
The contravariant functoriality of the pullback gives covariant
functoriality of the dependent product: if \(f\colon S\to T\) and
\(g\colon T\to U\), then we have a natural isomorphism 
\[
\Pi_g\circ \Pi_f\cong \Pi_{g\circ f}\colon \Set^G_{/S}\to\Set^G_{/U}.
\]
We use this to simplify the condition of compatibility: any map in \(\Set^G\) can be written as a disjoint union of composites of fold maps and maps between orbits. Note here that there is also the possibility of some of the disjoint summands being empty. Putting these observations together, it suffices to consider the dependent products along 
\begin{enumerate}
    \item the unique map \(\emptyset \to T\),
    \item a disjoint union of maps \((S_1\to T_1)\amalg (S_2\to T_2)\),
    \item the fold map \(S\amalg S\to S\), and
    \item maps of orbits \(G/H\to G/K\).
\end{enumerate}
Proposition~\ref{prop:Isos} shows that the dependent product along an
isomorphism preserves any pullback stable subcategory, so we will use
this whenever it makes formulae easier. 

\subsection{Main reductions}

We now begin a series of reductions of the condition.

\subsubsection{Initial maps}
\begin{proposition}\label{prop:DepProdMono}
    The dependent product along \(\iota\colon \emptyset\to T\) is always a terminal object \(T\to T\) of the slice category over \(T\).
\end{proposition}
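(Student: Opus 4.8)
The plan is to read the statement off directly from the defining adjunction of the dependent product. First I would pin down the source slice category: any finite $G$-set admitting a map to $\emptyset$ must itself be empty, so $\Set^G_{/\emptyset}$ is the terminal category, whose sole object is $\id\colon\emptyset\to\emptyset$ (which I will simply denote $\emptyset$) and whose only morphism is the identity. This triviality of $\Set^G_{/\emptyset}$ is the one observation doing all the work.

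Next I would invoke Definition~\ref{def:DepProd}: $\Pi_\iota$ is right adjoint to the pullback functor $\iota^\ast\colon \Set^G_{/T}\to\Set^G_{/\emptyset}$. For any object $Y\to T$ of $\Set^G_{/T}$, the pullback $\iota^\ast Y = Y\timesover{T}\emptyset$ is empty, hence equals the unique object of $\Set^G_{/\emptyset}$. The adjunction then supplies, for every $Y\in\Set^G_{/T}$, a natural bijection
\[
\Hom_{\Set^G_{/T}}\big(Y,\Pi_\iota(\emptyset)\big)\cong \Hom_{\Set^G_{/\emptyset}}\big(\iota^\ast Y,\emptyset\big),
\]
and the right-hand set is a singleton because $\Set^G_{/\emptyset}$ is terminal. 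Thus $\Hom_{\Set^G_{/T}}\big(Y,\Pi_\iota(\emptyset)\big)$ is a singleton for every $Y$, which says precisely that $\Pi_\iota(\emptyset)$ is the terminal object of $\Set^G_{/T}$, namely $\id\colon T\to T$.

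As a sanity check I would compare against the fiberwise description of the dependent product, under which the fiber of $\Pi_\iota(X)$ over $t\in T$ is the set of sections of the structure map over $\iota^{-1}(t)=\emptyset$; there is exactly one such (empty) section, again recovering $\Pi_\iota(\emptyset)\cong(T\xrightarrow{=}T)$. There is no genuine obstacle here: the content is entirely formal, and the only point requiring care is the initial remark that $\Set^G_{/\emptyset}$ is trivial, which is exactly what forces the adjunction to collapse onto the terminal object.
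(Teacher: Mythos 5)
Your proof is correct and follows essentially the same route as the paper's: both hinge on the observation that $\Set^G_{/\emptyset}$ is (equivalent to) the terminal category and then use the right-adjoint property of $\Pi_\iota$ to conclude that its value is terminal in $\Set^G_{/T}$. The only difference is cosmetic --- you unwind the adjunction hom-set bijection explicitly where the paper simply cites that right adjoints preserve terminal objects.
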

\begin{proof}
    The slice category over the empty sets is the full subcategory of initial objects in \(\Set^G\). Every object in this category is uniquely isomorphic to every other object, and this means that every object is also a terminal object of this category. Being a right adjoint, the dependent product preserves terminal objects.
\end{proof}

\begin{corollary}
    The dependent product along \(\emptyset\to T\) preserves any indexing category.
\end{corollary}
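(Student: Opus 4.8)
The plan is to read off this corollary directly from Proposition~\ref{prop:DepProdMono}, with the only extra input being that indexing categories are wide. First I would unpack what it means for the dependent product along $\iota\colon\emptyset\to T$ to \emph{preserve} an indexing category $\cO$. In the sense of the distributivity condition of Definition~\ref{def:Compatible}, this is precisely the assertion that
\[
\Pi_\iota\big((\cO)_{/\emptyset}\big)\subset(\cO)_{/T}.
\]
Here the slice $\Set^G_{/\emptyset}$ is essentially trivial: the only finite $G$-set admitting a map to $\emptyset$ is $\emptyset$ itself, so $\Set^G_{/\emptyset}$ (and hence $(\cO)_{/\emptyset}$) is equivalent to the terminal category, with unique object $\emptyset\xrightarrow{\id}\emptyset$.

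Next I would apply Proposition~\ref{prop:DepProdMono} verbatim: it states that $\Pi_\iota$ sends every object of the slice over $\emptyset$ to the terminal object of $\Set^G_{/T}$, namely the identity $T\xrightarrow{\id}T$. Thus the entire image $\Pi_\iota\big((\cO)_{/\emptyset}\big)$ consists, up to isomorphism, of the single object $\id_T$, and verifying the containment reduces to checking that $\id_T$ is an object of $(\cO)_{/T}$, i.e., that $\id_T$ is a morphism belonging to $\cO$.

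This final point is where the only structural hypothesis is used, and it is immediate rather than an obstacle: an indexing category is by definition a \emph{wide} subcategory of $\Set^G$, so it contains every object of $\Set^G$ together with all identity morphisms; in particular $\id_T\in\cO$, so $T\xrightarrow{\id}T$ genuinely lies in $(\cO)_{/T}$. The desired containment follows. There is no real difficulty here—the argument is complete once one identifies the terminal object of the slice over $T$ with $\id_T$ and invokes wideness to place it in $\cO$—so I would expect the write-up to be only a sentence or two long.
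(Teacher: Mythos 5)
Your argument is correct and is exactly the reasoning the paper intends: the corollary is stated without proof as an immediate consequence of Proposition~\ref{prop:DepProdMono}, since the image of the (essentially trivial) slice over $\emptyset$ is the terminal object $\id_T$, which lies in $\cO$ by wideness. Nothing is missing.
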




\subsubsection{Disjoint Unions}

\begin{proposition}
    Let \(f\colon T\to T'\) and \(g\colon U\to U'\). If \(h\colon S\to T\amalg U\), let 
    \[
        S_T=h^{-1}(T)
    \]
    let \(h_T:S_T\to T\) be the restriction of \(h\), and similarly for \(S_U\) and \(h_U\). Then we have a natural isomorphism
    \[
    \Pi_{f\amalg g}(h)\cong \Pi_f(h_T)\amalg \Pi_g(h_U).
    \]
\end{proposition}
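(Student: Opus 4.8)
The plan is to deduce the claim formally from the fact that $\Set^G$ is (l)extensive, so that dependent product along a coproduct of maps decomposes as a coproduct of dependent products simply by passing to right adjoints. First I would record the canonical equivalence of slice categories
\[
\Set^G_{/(T\amalg U)}\simeq \Set^G_{/T}\times\Set^G_{/U},
\]
which sends an object $h\colon S\to T\amalg U$ to the pair $(h_T\colon S_T\to T,\ h_U\colon S_U\to U)$ obtained by restricting along the two inclusions, where $S=S_T\amalg S_U$ with $S_T=h^{-1}(T)$ and $S_U=h^{-1}(U)$. The same equivalence holds over the target, $\Set^G_{/(T'\amalg U')}\simeq \Set^G_{/T'}\times\Set^G_{/U'}$, and under it the given object $h$ corresponds exactly to $(h_T,h_U)$.

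Next I would show that under these equivalences the pullback functor $(f\amalg g)^\ast$ corresponds to $f^\ast\times g^\ast$. This is where the actual content sits: it is the statement that coproducts in $\Set^G$ are disjoint and stable under pullback. Concretely, given $V\to T'\amalg U'$ decomposed as $V_{T'}\amalg V_{U'}$, pulling back along $f\amalg g$ yields $(f^\ast V_{T'})\amalg (g^\ast V_{U'})$, because the preimage of the $T'$-component lives entirely over $T$ and there agrees with the pullback of $V_{T'}$ along $f$, and symmetrically for the $U'$-component.

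Taking right adjoints then finishes the argument. Since $\Pi_{f\amalg g}$, $\Pi_f$, and $\Pi_g$ are by definition the right adjoints of the respective pullback functors, and the right adjoint of a product of functors is the product of the right adjoints, the identification $(f\amalg g)^\ast\cong f^\ast\times g^\ast$ dualizes to
\[
\Pi_{f\amalg g}\cong \Pi_f\times\Pi_g\colon \Set^G_{/T}\times\Set^G_{/U}\to \Set^G_{/T'}\times\Set^G_{/U'}.
\]
Evaluating on the pair $(h_T,h_U)$ corresponding to $h$ gives $(\Pi_f(h_T),\Pi_g(h_U))$, which under the slice decomposition over $T'\amalg U'$ is precisely $\Pi_f(h_T)\amalg\Pi_g(h_U)$. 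Naturality in $h$ is automatic, since every identification used is a natural equivalence of categories or a natural adjunction isomorphism.

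The one step requiring care, and which I expect to be the main obstacle, is the compatibility of pullback with the coproduct decomposition, i.e.\ $(f\amalg g)^\ast\cong f^\ast\times g^\ast$; everything else is a formal manipulation of adjoints. If one prefers to avoid invoking extensivity abstractly, the same conclusion follows by a Yoneda argument: for any $V\to T'\amalg U'$ one rewrites $\Hom_{/(T'\amalg U')}(V,\Pi_{f\amalg g}(h))$ via adjunction as $\Hom_{/(T\amalg U)}((f\amalg g)^\ast V, h)$, notes that a map over $T\amalg U$ must respect the two components, and thereby splits it as $\Hom_{/T}(f^\ast V_{T'},h_T)\times\Hom_{/U}(g^\ast V_{U'},h_U)$, which reassembles to $\Hom_{/(T'\amalg U')}(V,\Pi_f(h_T)\amalg\Pi_g(h_U))$. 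A direct fiberwise computation, using that the fiber of $\Pi_g(p)$ over a point $t$ is the product of the fibers of $p$ over $g^{-1}(t)$, would also work, but tracking the $G$-equivariance makes the adjoint route cleaner.
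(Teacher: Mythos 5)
Your argument is correct and is essentially the paper's own proof: both pass to the product decomposition of the slice categories over the disjoint unions, identify $(f\amalg g)^\ast$ with $(f^\ast,g^\ast)$ under these equivalences, and then take right adjoints. The extra remarks on extensivity and the alternative Yoneda route are fine but not needed beyond what the paper records.
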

\begin{proof}
    The assignment
    \[
        h\mapsto (h_T,h_U)
    \]
    gives a functor
    \[
    \Set^G_{/(T\amalg U)}\to\Set^G_{/T}\times\Set^G_{/U}.
    \]
    This is an equivalence of categories, with inverse equivalence given by 
    \[
    \big((S_T\to T),(S_U\to U)\big)\mapsto (S_T\amalg S_U\to T\amalg U).
    \]
    This product decomposition is also natural: we have a commutative diagram
    \[
    \begin{tikzcd}
        {\Set^G_{/(T\amalg U)}}
            \ar[d, "\simeq"']
            &
        {\Set^G_{/(T'\amalg U')}}
            \ar[l, "{(f\amalg g)^\ast}"']
            \\
        {\Set^G_{/T}\times\Set^G_{/U}}
            &
        {\Set^G_{/T'}\times\Set^G_{/U'}}
            \ar[l, "{(f^\ast,g^\ast)}"]
            \ar[u, "\simeq"']
    \end{tikzcd}
    \]
    The result follows from noting that the left vertical map takes \(h\) to \((h_T,h_U)\), the right adjoint to the bottom map is \((\Pi_f,\Pi_g)\), and right vertical map is the disjoint union.
\end{proof}

Being closed under dependent products along disjoint unions follows from simply being closed under disjoint unions and the dependent products along the summands.

\begin{corollary}
    If an indexing system \(\cO\) is closed under dependent products along \(f\colon T\to T'\) and \(g\colon U\to U'\), then it is closed under the dependent product along \(f\amalg g\).
\end{corollary}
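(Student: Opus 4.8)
The plan is to deduce this directly from the preceding Proposition together with the defining closure properties of an indexing category. First I would unwind the hypothesis: that \(\cO\) is closed under the dependent product along \(f\) means precisely that \(\Pi_f\big((\cO)_{/T}\big)\subseteq(\cO)_{/T'}\), and similarly for \(g\). The goal is then to show \(\Pi_{f\amalg g}\big((\cO)_{/(T\amalg U)}\big)\subseteq(\cO)_{/(T'\amalg U')}\).

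So fix an object \(h\colon S\to T\amalg U\) of \((\cO)_{/(T\amalg U)}\), that is, an arrow \(h\) lying in \(\cO\). I would first observe that the restrictions \(h_T\colon S_T\to T\) and \(h_U\colon S_U\to U\) appearing in the previous Proposition are exactly the pullbacks of \(h\) along the two coproduct inclusions \(T\hookrightarrow T\amalg U\) and \(U\hookrightarrow T\amalg U\). Since indexing categories are pullback stable, both \(h_T\) and \(h_U\) again lie in \(\cO\); that is, \(h_T\in(\cO)_{/T}\) and \(h_U\in(\cO)_{/U}\).

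Now I would apply the two hypotheses to conclude \(\Pi_f(h_T)\in(\cO)_{/T'}\) and \(\Pi_g(h_U)\in(\cO)_{/U'}\), and invoke the preceding Proposition for the natural isomorphism
\[
\Pi_{f\amalg g}(h)\cong\Pi_f(h_T)\amalg\Pi_g(h_U).
\]
Because indexing categories are finite coproduct complete, the disjoint union \(\Pi_f(h_T)\amalg\Pi_g(h_U)\) is again an arrow in \(\cO\), and hence lies in \((\cO)_{/(T'\amalg U')}\). Finally, since every isomorphism belongs to an indexing category, membership in \(\cO\) is preserved under the isomorphism above, so \(\Pi_{f\amalg g}(h)\) itself lies in \((\cO)_{/(T'\amalg U')}\), as desired.

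There is really no serious obstacle here: the entire content has been front-loaded into the preceding Proposition's decomposition of \(\Pi_{f\amalg g}\) as a disjoint union of dependent products along the summands. The only points requiring a moment's care are the two bookkeeping observations — recognizing \(h_T\) and \(h_U\) as pullbacks so that pullback stability applies, and then closing up under both disjoint union and isomorphism at the end — each of which is immediate from the definition of an indexing category.
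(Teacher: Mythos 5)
Your proof is correct and follows exactly the route the paper intends: the paper leaves the corollary as an immediate consequence of the preceding decomposition \(\Pi_{f\amalg g}(h)\cong\Pi_f(h_T)\amalg\Pi_g(h_U)\) together with closure under disjoint unions. Your only addition is to spell out that \(h_T\) and \(h_U\) lie in \(\cO\) via pullback stability along the coproduct inclusions, a detail the paper leaves implicit but which is exactly right.
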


\subsubsection{Fold maps}
The dependent product along the fold map is closely connected to the categorical product in the slice categories.

\begin{proposition}
    Let \(\nabla\colon T\amalg T\to T\) be the fold map, and let \(\iota_L,\iota_R\colon T\to T\amalg T\) be the left and right inclusions. We have a natural isomorphism
    \[
    \Pi_{\nabla}\cong \iota_L^{\ast}\timesover{T}\iota_R^{\ast}.
    \]
\end{proposition}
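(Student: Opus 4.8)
The plan is to exploit the decomposition of the slice category over a coproduct that was just established, transport the pullback functor $\nabla^\ast$ across this decomposition, and then read off $\Pi_\nabla$ by taking right adjoints. Concretely, the preceding proposition, applied with both summands equal to $T$, gives an equivalence
\[
(\iota_L^\ast,\iota_R^\ast)\colon \Set^G_{/(T\amalg T)}\xrightarrow{\ \simeq\ }\Set^G_{/T}\times\Set^G_{/T},
\]
whose quasi-inverse is disjoint union. Under this equivalence I would identify $\nabla^\ast$ with the diagonal and thereby identify its right adjoint $\Pi_\nabla$ with the fiber product.

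First I would show that $\nabla^\ast$ corresponds to the diagonal functor $\Delta\colon \Set^G_{/T}\to\Set^G_{/T}\times\Set^G_{/T}$. Since pullback is contravariantly functorial and $\nabla\circ\iota_L=\nabla\circ\iota_R=\id_T$, we have
\[
\iota_L^\ast\circ\nabla^\ast=(\nabla\circ\iota_L)^\ast=\id\quad\text{and}\quad \iota_R^\ast\circ\nabla^\ast=(\nabla\circ\iota_R)^\ast=\id,
\]
so $(\iota_L^\ast,\iota_R^\ast)\circ\nabla^\ast=(\id,\id)=\Delta$. In other words, under the equivalence above the pullback $\nabla^\ast$ is naturally isomorphic to the diagonal.

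Next I would take right adjoints. The dependent product $\Pi_\nabla$ is by definition the right adjoint of $\nabla^\ast$, and since an equivalence and its quasi-inverse are adjoint on both sides, transporting along $(\iota_L^\ast,\iota_R^\ast)$ identifies $\Pi_\nabla$ with the right adjoint of $\Delta$. That right adjoint is the binary product functor on $\Set^G_{/T}$, which is precisely the fiber product $\timesover{T}$ (here one uses that $\Set^G$, hence each slice, has pullbacks). Unwinding the transport sends an object over $T\amalg T$ to $\iota_L^\ast(\mhyphen)\timesover{T}\iota_R^\ast(\mhyphen)$, giving the claimed isomorphism $\Pi_\nabla\cong\iota_L^\ast\timesover{T}\iota_R^\ast$. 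The only real content — and the step I would be most careful about — is the bookkeeping in transporting the adjunction across the equivalence and confirming naturality in the argument; I would in particular want to match the unit and counit of the pullback–dependent-product adjunction with those of the diagonal–product adjunction, rather than merely matching the two functors objectwise. Everything else is formal, since the combinatorial heart of the matter (the decomposition of the slice category over a coproduct) is already available.
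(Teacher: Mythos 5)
Your argument is correct, but it takes a genuinely different route from the paper. The paper's proof is a direct, elementwise computation: it decomposes \(S\cong S_L\amalg S_R\) over \(T\amalg T\) and then appeals to the explicit description of the dependent product in \(\Set^G\) as sections over fibers, observing that a section over \(\nabla^{-1}(t)=\{t\}\amalg\{t\}\) is exactly a pair \((s_L,s_R)\), i.e.\ a point of \(S_L\timesover{T}S_R\). You instead run a purely formal argument: transport \(\nabla^\ast\) across the equivalence \(\Set^G_{/(T\amalg T)}\simeq\Set^G_{/T}\times\Set^G_{/T}\), identify it with the diagonal via \(\nabla\circ\iota_L=\nabla\circ\iota_R=\id\), and take right adjoints, using that the right adjoint of the diagonal on a slice is the fiber product. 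The adjunction bookkeeping you flag is fine: an equivalence is biadjoint to its quasi-inverse, so right adjoints compose and are unique up to natural isomorphism, which is all the statement requires. Your approach buys generality --- it works in any locally cartesian closed extensive category, in the spirit of the Gambino--Kock and Weber references the paper cites, and it avoids pointwise reasoning entirely --- at the cost of leaning on the slice-decomposition equivalence, which strictly speaking appears in the \emph{proof} of the preceding proposition rather than in its statement. The paper's version is shorter and more concrete but is specific to \(\Set^G\).
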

\begin{proof}
    Given any \(S\to T\amalg T\), let 
    \[
        S_L=\iota_L^\ast(S)\text{ and }S_R=\iota_R^\ast(S).
    \]
    We have a natural isomorphism over \(T\amalg T\):
    \[
        S_L\amalg S_R\cong S
    \]
    expressing the disjunctive property of maps to a disjoint union of sets. We now appeal to a direct construction of the dependent product: the fiber over a point \(t\in T\) is the set of sections of \(S\) over \(\nabla^{-1}(t)\). The set \(\nabla^{-1}(t)\) is \(\{t\}\amalg\{t\}\), and a section over this is by construction a pair \((s_L,s_R)\), where \(s_L\in S_L\) and \(s_R\in S_R\) both map to \(t\). This is the same data as a point in the fiber of \(S_L\timesover{T} S_R\) over \(t\).
\end{proof}

\begin{lemma}\label{lem:IndexingSlicesProducts}
    Slices of indexing categories are closed under fiber products.
\end{lemma}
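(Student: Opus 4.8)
The plan is to reduce a fiber product in a slice to a single base change followed by a structure map, both of which already lie in the indexing category. Fix an indexing category $\cO$ and a finite $G$-set $T$. An object of $(\cO)_{/T}$ is a map $A\to T$ lying in $\cO$, and a morphism is an $\cO$-map commuting with the structure maps to $T$; a fiber product in this slice is computed as a pullback in $\Set^G$, so what I must show is that such a pullback, equipped with its induced map to $T$, again lies in $\cO$.

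Concretely, I would start from a cospan $A\xrightarrow{u}C\xleftarrow{v}B$ of objects and maps in $(\cO)_{/T}$ and form $A\timesover{C}B$. Its structure map to $T$ factors as
\[
A\timesover{C}B\xrightarrow{\ \mathrm{pr}_A\ }A\longrightarrow T,
\]
where the second arrow is the structure map of $A$, which is in $\cO$ by definition of the slice. The decisive point is that $\mathrm{pr}_A$ is exactly the pullback of $v\colon B\to C$ along $u\colon A\to C$; since $v\in\cO$ and $\cO$ is pullback stable, $\mathrm{pr}_A$ lies in $\cO$. As $\cO$ is a subcategory it is closed under composition, so the composite $A\timesover{C}B\to T$ lies in $\cO$, which is precisely the statement that $A\timesover{C}B$ is an object of $(\cO)_{/T}$.

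For the application driving this lemma I only need the special case $C=T$, where the fiber product is the categorical product $A\timesover{T}B$ of the slice: then $u$ and $v$ are the structure maps $a$ and $b$ themselves, and the argument says that $A\timesover{T}B\to T$ is in $\cO$ because it is $a$ precomposed with the pullback of $b$ along $a$. This is what feeds into the fold-map reduction via the identification $\Pi_\nabla\cong\iota_L^\ast\timesover{T}\iota_R^\ast$ established above.

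I do not expect a serious obstacle: the whole content is that pullback stability transports $v$ to a map in $\cO$ and that $\cO$ is closed under composition. The only point requiring care is the bookkeeping --- base-changing $v$ along $u$ (and not the reverse), so that the resulting projection composes with the structure map of $A$ to recover the correct map $A\timesover{C}B\to T$; choosing the other projection works symmetrically.
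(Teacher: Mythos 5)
Your proof is correct and uses exactly the paper's argument: form the pullback, observe that the projection is the base change of a map in $\cO$ and hence lies in $\cO$ by pullback stability, then compose with the structure map to $T$. The paper states this only for the case $C=T$ (the categorical product in the slice), which is the case actually needed for the fold-map reduction; your extra generality over an arbitrary cospan is harmless and changes nothing essential.
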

\begin{proof}
    This is a consequence of pullback stability. Let \(f\colon S_1\to T\) and \(g\colon S_2\to T\) both be in \(\cO\). Then the fiber product is defined by the pullback diagram
    \[
        \begin{tikzcd}
            {S_1\timesover{T} S_2}
                \ar[d, "{g'}"']
                \ar[r, "{f'}"]
                &
            {S_2}
                \ar[d, "g"]
                \\
            {S_1}
                \ar[r, "f"]
                &
            {T.}
        \end{tikzcd}
    \]
    Pullback stability of \(\cO\) guarantees that \(g'\) is in \(\cO\) as well, and hence the composite \(f\circ g'\) is.
\end{proof}

\begin{remark}
    Under the equivalences 
    \[
        \Set^G_{/(G/H)}\simeq \Set^H,
    \]
    the fiber product in \(G\)-sets over \(G/H\) is sent to the ordinary product of \(H\)-sets. Lemma~\ref{lem:IndexingSlicesProducts} is then the indexing category version of the statement ``admissible \(H\)-sets are closed under products'', which is \cite[Lemma 4.11]{BHNinfty}.
\end{remark}

\begin{corollary}
    Any indexing category is closed under dependent products along fold maps.
\end{corollary}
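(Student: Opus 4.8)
The plan is to deduce this corollary directly from the two immediately preceding results: the identification $\Pi_{\nabla}\cong\iota_L^{\ast}\timesover{T}\iota_R^{\ast}$ for the fold map, and Lemma~\ref{lem:IndexingSlicesProducts}, which asserts that slices of an indexing category are closed under fiber products. Fix an indexing category $\cO$ and a fold map $\nabla\colon T\amalg T\to T$. Unwinding what it means to be closed under the dependent product along $\nabla$ (as in Definition~\ref{def:Compatible}), I must show that for every object $X\to T\amalg T$ of $(\cO)_{/(T\amalg T)}$, the dependent product $\Pi_{\nabla}(X)$ lies in $(\cO)_{/T}$.

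First I would pull back along the two inclusions $\iota_L,\iota_R\colon T\to T\amalg T$. Since $X\to T\amalg T$ is a morphism in $\cO$ and $\cO$ is pullback stable, the restrictions $X_L=\iota_L^{\ast}(X)\to T$ and $X_R=\iota_R^{\ast}(X)\to T$ are again morphisms in $\cO$, i.e., objects of $(\cO)_{/T}$. Invoking the preceding identification of $\Pi_{\nabla}$ with the fiberwise product then gives
\[
\Pi_{\nabla}(X)\cong X_L\timesover{T}X_R.
\]
By Lemma~\ref{lem:IndexingSlicesProducts}, the slice $(\cO)_{/T}$ is closed under fiber products, so $X_L\timesover{T}X_R$ is an object of $(\cO)_{/T}$. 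Hence $\Pi_{\nabla}(X)\in(\cO)_{/T}$, which is precisely the statement that $\cO$ is closed under the dependent product along $\nabla$.

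There is no genuine obstacle here, since the substantive content has already been absorbed into the two preceding statements and this corollary is merely their composite. The single point deserving a moment's care is the verification that pullback along $\iota_L$ and $\iota_R$ lands back in $\cO$; this is a direct application of pullback stability, where $X\to T\amalg T$ plays the role of the morphism in $\cO$ being pulled back and $\iota_L,\iota_R$ are the (arbitrary) morphisms into $T\amalg T$ along which we pull back.
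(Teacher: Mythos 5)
Your proof is correct and follows exactly the route the paper intends: the corollary is stated there without proof as the immediate composite of the identification $\Pi_{\nabla}\cong\iota_L^{\ast}\timesover{T}\iota_R^{\ast}$ with Lemma~\ref{lem:IndexingSlicesProducts}, and your argument spells out precisely that composite, including the (correct) appeal to pullback stability to see that $X_L$ and $X_R$ land back in $\cO$.
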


We pause here to note that these pieces alone are sufficient to show that the analogue of Green functors always works.

\begin{theorem}\label{thm:OaOtrCompatible}
    For any indexing category \(\cO_a\), the pair \((\cO_a,\cO^{tr})\) is compatible.
\end{theorem}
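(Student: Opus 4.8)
The plan is to leverage the elementary reductions already assembled in this section, noting that the maps appearing in $\cO^{tr}$ are built entirely from pieces we have handled. First I would describe the morphisms of $\cO^{tr}$ explicitly. Since $\cO^{tr}(H)$ is the category of trivial $H$-sets, its admissible fibers are exactly the fixed sets; unwinding the correspondence between indexing systems and indexing categories, a $G$-map $g\colon S\to T$ lies in $\cO^{tr}$ precisely when it is \emph{isotropy-preserving}, meaning $\Stab(s)=\Stab(g(s))$ for every $s\in S$. This is the observation, recorded after the discussion of the box product, that every map preserves isotropy when $\cO=\cO^{tr}$.

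Next I would decompose such a $g$ into fold maps and isomorphisms. Writing $T=\coprod_j T_j$ as its orbit decomposition, the corollary on disjoint unions reduces us to a single orbit target $T_j=G/K$. Over $G/K$ the preimage is a disjoint union of orbits $\coprod_i G/H_i$, and isotropy-preservation forces each $H_i$ to be conjugate to $K$; hence each component $G/H_i\to G/K$ is a surjection of orbits of equal cardinality, so it is an isomorphism. The restriction therefore factors as
\[
\coprod_i G/H_i \xrightarrow{\ \phi\ } \coprod_i G/K \xrightarrow{\ \nabla\ } G/K,
\]
with $\phi$ an isomorphism and $\nabla$ an iterated fold map.

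To conclude, I would combine the closure results already proved. The dependent product along an isomorphism preserves any pullback stable subcategory by Proposition~\ref{prop:Isos}; any indexing category is closed under dependent products along fold maps, and iterated folds are composites of binary folds with identities, so they too are handled via the disjoint-union corollary together with functoriality; finally the functoriality $\Pi_{\nabla}\circ\Pi_{\phi}\cong\Pi_{\nabla\circ\phi}$ propagates these closures along the factorization above. Reassembling the orbits of $T$ by the disjoint-union corollary, with empty preimages handled by the dependent product along $\emptyset\to T$, yields $\Pi_g\big((\cO_a)_{/S}\big)\subset(\cO_a)_{/T}$ for every $g\in\cO^{tr}$. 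This is exactly the assertion that $\cO^{tr}$ distributes over $\cO_a$, i.e.\ that the pair $(\cO_a,\cO^{tr})$ is compatible.

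The only real obstacle is the orbit-wise step: one must verify that isotropy-preservation genuinely collapses all orbit-to-orbit maps to isomorphisms, since the non-invertible orbit maps are exactly what carries the substantive content of compatibility in general (compare Theorem~\ref{thm:OaOmCompatibility}). Once they are eliminated, no closure property of $\cO_a$ is needed beyond the three elementary ones established above.
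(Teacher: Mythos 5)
Your proposal is correct and follows essentially the same route as the paper: the paper's proof simply observes that the only maps of orbits in \(\cO^{tr}\) are isomorphisms, so only the first three reduction cases (initial maps, disjoint unions, fold maps) need to be checked. You have merely unwound that one-line observation in full detail, via the isotropy-preservation characterization of \(\cO^{tr}\) and the explicit factorization into isomorphisms and folds, which is a faithful (if more verbose) rendering of the same argument.
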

\begin{proof}
    In \(\cO^{tr}\), the only allowed maps of orbits are isomorphisms, and hence the only conditions we needed to check to ensure compatibility are the first three.
\end{proof}

The \((\cO_a,\cO^{tr})\)-Tambara functors are essentially
\(\cO_a\)-Green functors.  In fact, Strickland's proof for the
additively complete case \(\cO_a=\cO^{gen}\) goes through without
change in the incomplete case. 

\begin{proposition}[{\cite[Proposition 12.11]{Strickland}}]
There is an equivalence of categories between \(\cO\)-Green functors
and \((\cO,\cO^{tr})\)-Tambara functors.
\end{proposition}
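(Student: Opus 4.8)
The plan is to build mutually inverse functors $\Phi\colon(\cO,\cO^{tr})\mhyphen\Tamb\to\cO\mhyphen\Green$ and $\Psi$ in the opposite direction, and then to observe that Strickland's argument for the additively complete case $\cO_a=\cO^{gen}$ applies unchanged once two things are in place: the pair $(\cO,\cO^{tr})$ is compatible by Theorem~\ref{thm:OaOtrCompatible}, so $\cP^G_{\cO,\cO^{tr}}$ is a genuine category; and every relation his proof exploits uses only transfers $T_h$ with $h\in\cO$ together with the multiplication, and never a transfer outside $\cO$.

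First I would unwind the two definitions. By Proposition~\ref{prop:MonoidStructure} the hom-sets of $\cP^G_{\cO,\cO^{tr}}$ are commutative semirings, so a product-preserving $\mR\colon\cP^G_{\cO,\cO^{tr}}\to\Set$ valued in abelian groups is exactly an $\cO$-Mackey functor whose values $\mR(X)$ are commutative rings, with each restriction $R_f$ a ring homomorphism and each transfer $T_h$ (for $h\in\cO$) satisfying the Frobenius relation. On the Green side, the universal property of the incomplete box product identifies a commutative-monoid structure $\mM\Box\mM\to\mM$ with a natural, associative, commutative, unital external product $\mM(X)\otimes\mM(Y)\to\mM(X\times Y)$, again subject to the Frobenius relation for $T_f$ with $f\in\cO$. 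Next I would define the functors. For $\Phi$, I take the underlying additive $\cO$-Mackey functor of $\mR$, obtained by precomposing with the inclusion $\cA^G_{\cO}\hookrightarrow\cP^G_{\cO,\cO^{tr}}$ that records a span as a polynomial with identity norm, and equip it with the external product $a\otimes b\mapsto R_{\pi_X}(a)\cdot R_{\pi_Y}(b)$ computed in $\mR(X\times Y)$; associativity, commutativity, and unitality follow from the commutative-ring structure, and naturality packages this into a box-product monoid. For $\Psi$, I keep the Mackey data of an $\cO$-Green functor $\mM$ and define the norm $N_g$ for $g\in\cO^{tr}$: since every map of $\cO^{tr}$ preserves isotropy, its fibers are trivial sets, and $N_g$ is the corresponding finite ``product over the fibers'', completely determined by the multiplication.

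The heart of the argument, and the main obstacle, is checking that $\Psi(\mM)$ respects the composition relations of Section~\ref{ssec:Polynomials}. The $R$-$N$ and $R$-$T$ pullback relations are immediate from naturality of the product and from the Mackey structure respectively, and functoriality of $N$ in $g$ is just associativity and commutativity of the monoid. The only substantial relation is the $N$-$T$ interchange $N_g\circ T_h=T_{h'}\circ N_{g'}\circ R_{f'}$ attached to an exponential diagram with $g\in\cO^{tr}$ and $h\in\cO$. Here Theorem~\ref{thm:OaOtrCompatible} supplies the categorical input, guaranteeing $h'\in\cO$ so that the transfer on the right actually exists; and because $g$ is isotropy-preserving, the norm is a genuine finite product over a trivial fiber, so the identity collapses to finitely many instances of the Frobenius relation $a\cdot T_f(b)=T_f(R_f(a)\cdot b)$ with $f\in\cO$, which is precisely the data carried by the Green functor. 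Since this computation is Strickland's and never invokes a transfer outside $\cO$, it goes through verbatim.

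Finally I would verify that $\Phi$ and $\Psi$ are mutually inverse. Rebuilding the $\cO^{tr}$-norms of $\mR$ from its ring structure recovers them, since they were products over trivial fibers to begin with; and the external product extracted from $\Psi(\mM)$ is the one we started with, by naturality of the multiplication together with the identification of the box product via Day convolution. Both round-trips are evidently natural in $\mR$ and in $\mM$, which yields the claimed equivalence of categories.
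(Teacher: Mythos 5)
Your proposal is correct and follows the same route as the paper, which simply observes that Strickland's Proposition 12.11 for the complete case $\cO_a=\cO^{gen}$ goes through unchanged once one knows $(\cO,\cO^{tr})$ is compatible (Theorem~\ref{thm:OaOtrCompatible}) and that every transfer appearing in the argument lies in $\cO$. You have fleshed out exactly the points the paper leaves implicit --- that $\cO^{tr}$-norms are isotropy-preserving products over fibers determined by the multiplication, and that the $N$--$T$ interchange reduces to the Frobenius relation --- so there is nothing to correct.
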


\subsection{Admissibility}

Our reductions show that the possible obstruction to compatibility of
an additive and multiplicative indexing category is the dependent
product along a map of orbits 
\[
    G/K\to G/H.
\]
It turns out that this is a surprisingly harsh condition.  By
Proposition~\ref{prop:Isos}, we may assume that \(K\subset H\) and the
map is the canonical quotient. We recall a proposition from
\cite{HMazur}.

\begin{proposition}[{\cite[Proposition 2.3]{HMazur}}]
    Let \(K\subset H\) be subgroups of \(G\), and let \(f\colon T\to G/K\) be a map of \(G\)-sets. Then the dependent product of \(f\) along the canonical quotient \(G/K\to G/H\) is
    \[
    G\timesover{H}\Map^K(H,T_e)\to G/H,
    \]
    where \(T_e=f^{-1}(eK)\) is the \(K\)-set corresponding to \(T\) under the equivalence of categories
    \[
        \Set^K\simeq \Set^G_{/(G/K)}.
    \]
\end{proposition}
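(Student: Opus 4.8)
The plan is to compute the dependent product by transporting the \emph{defining} adjunction through the change-of-groups equivalences, rather than by a hands-on analysis of sections. Recall the equivalences
\[
\Set^H\xrightarrow[\simeq]{G\timesover{H}(\mhyphen)}\Set^G_{/(G/H)}
\qquad\text{and}\qquad
\Set^K\xrightarrow[\simeq]{G\timesover{K}(\mhyphen)}\Set^G_{/(G/K)},
\]
under which an $H$-set (resp.\ $K$-set) is sent to the associated induced $G$-set, with inverse given by taking the fiber over the identity coset. Writing $q\colon G/K\to G/H$ for the canonical quotient, the first step is to identify the pullback functor $q^\ast\colon\Set^G_{/(G/H)}\to\Set^G_{/(G/K)}$: I claim it corresponds under these equivalences to the restriction functor $\Set^H\to\Set^K$. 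This is a direct fiber computation. Given $A\to G/H$ with fiber the $H$-set $A_{eH}$ over $eH$, the fiber of $A\timesover{G/H}G/K$ over $eK$ is the set of points of $A$ lying over $q(eK)=eH$, which is $A_{eH}$ again; the residual action of $K\subset H$ is exactly the restriction of the $H$-action, since $K$ fixes the coset $eK$.

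Next I would invoke uniqueness of adjoints. By Definition~\ref{def:DepProd}, $\Pi_q$ is the right adjoint of $q^\ast$, and an equivalence of categories transports a right adjoint to a right adjoint. Hence $\Pi_q$ corresponds to the right adjoint of restriction, namely the coinduction functor $\Map^K(H,\mhyphen)\colon\Set^K\to\Set^H$, where $\Map^K(H,T_e)$ consists of the $K$-equivariant maps $H\to T_e$ and carries the $H$-action by right translation $(h\cdot\varphi)(x)=\varphi(xh)$. Translating the $H$-set $\Map^K(H,T_e)$ back through $G\timesover{H}(\mhyphen)$ then produces exactly $G\timesover{H}\Map^K(H,T_e)\to G/H$, as claimed.

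The main point requiring care is precisely the matching of adjunctions: one must check that the identification $q^\ast\simeq(\text{restriction})$ is natural, and that the $H$-action on $\Map^K(H,T_e)$ is the one (right translation) for which the coinduction adjunction
\[
\Hom_{H}\big(M,\Map^{K}(H,N)\big)\cong\Hom_{K}\big(\operatorname{res}^{H}_{K}M,N\big)
\]
is the adjunction transported from the pullback--dependent-product adjunction; everything else is formal. As an independent cross-check one can compute $\Pi_q(f)$ fiberwise from Definition~\ref{def:DepProd}, exactly as in the fold-map case: the fiber over $gH$ is the set of sections of $f$ over $q^{-1}(gH)\cong H/K$, and unwinding the $G$-equivariance of $f$ (so that the fiber of $T$ over $ghK$ is $gh\cdot T_e$) identifies such sections with $K$-equivariant functions $H\to T_e$, i.e.\ with $\Map^K(H,T_e)$; letting $g$ vary and tracking the $G$-action reassembles these into the induced set $G\timesover{H}\Map^K(H,T_e)$.
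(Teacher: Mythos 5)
Your argument is correct and complete: identifying $q^\ast$ with restriction under the slice equivalences and transporting the right adjoint is exactly the standard proof, and your fiberwise cross-check via sections is consistent with it. Note that the paper itself gives no proof of this statement --- it is recalled verbatim from Hill--Mazur \cite[Proposition 2.3]{HMazur} --- so there is nothing internal to compare against; your write-up would serve as a self-contained justification, with the only conventions to pin down being the naturality of the identification $q^\ast\simeq \operatorname{res}^H_K$ and the choice of $H$-action on $\Map^K(H,-)$ making coinduction right adjoint to restriction, both of which you correctly flag.
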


This gives us the last piece we need, so we collect all of our reductions into one statement.

\begin{theorem}\label{thm:OaOmCompatibility}
Let \(\cO_a\) and \(\cO_m\) be indexing categories. Then \((\cO_a,\cO_m)\) is compatible if and only if for every pair of subgroups \(K\subset H\) such that \(H/K\) is an admissible \(H\)-set for \(\cO_m\) and for every admissible \(K\)-set \(T\) for \(\cO_a\), the coinduced \(H\)-set \(\Map^K(H,T)\) is admissible for \(\cO_a\).
\end{theorem}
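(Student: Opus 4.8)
The plan is to assemble the reductions of this section into the single admissibility criterion, working through the definition of compatibility one basic map-type at a time. By Definition~\ref{def:Compatible}, compatibility means that for every \(h\colon S\to T\) in \(\cO_m\) we have \(\Pi_h\big((\cO_a)_{/S}\big)\subset(\cO_a)_{/T}\). First I would invoke the covariant functoriality \(\Pi_{g\circ f}\cong\Pi_g\circ\Pi_f\) together with the decomposition of an arbitrary \(G\)-map as a disjoint union of composites of fold maps and maps of orbits, so that it suffices to treat the four basic cases: the map \(\emptyset\to T\), a disjoint union of maps, a fold map \(T\amalg T\to T\), and a quotient of orbits. The first three have already been shown to preserve \emph{every} indexing category (the corollaries to Proposition~\ref{prop:DepProdMono}, to the disjoint-union computation, and to Lemma~\ref{lem:IndexingSlicesProducts}), so these impose no condition on the pair \((\cO_a,\cO_m)\) at all.

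The entire content therefore concentrates in the orbit case. Using Proposition~\ref{prop:Isos}, I would reduce each orbit map to a canonical quotient \(q\colon G/K\to G/H\) with \(K\subset H\). Under the dictionary between indexing categories and indexing systems, such a \(q\) lies in \(\cO_m\) precisely when the corresponding \(H\)-set \(H/K\) is \(\cO_m\)-admissible; the objects of the slice \((\cO_a)_{/(G/K)}\) are exactly the \(\cO_a\)-admissible \(K\)-sets \(T\) (via \(\Set^K\simeq\Set^G_{/(G/K)}\)), while the objects of \((\cO_a)_{/(G/H)}\) are the \(\cO_a\)-admissible \(H\)-sets. Feeding an admissible \(K\)-set \(T\) into the explicit formula from \cite[Proposition 2.3]{HMazur}, the functor \(\Pi_q\) sends \(T\) to the \(H\)-set \(\Map^K(H,T)\). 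Thus the containment \(\Pi_q\big((\cO_a)_{/(G/K)}\big)\subset(\cO_a)_{/(G/H)}\) translates verbatim into the statement that \(\Map^K(H,T)\) is \(\cO_a\)-admissible for every \(\cO_a\)-admissible \(K\)-set \(T\), which is the asserted criterion.

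It remains to run both implications. For necessity, given compatibility and a pair \(K\subset H\) with \(H/K\) admissible for \(\cO_m\), the map \(q\) lies in \(\cO_m\), so the defining containment applies directly to give admissibility of \(\Map^K(H,T)\). For sufficiency, I would reassemble the reductions: a general \(g\in\cO_m\) restricts, over each orbit of its target, to an \(\cO_m\)-admissible \(H_j\)-set, which decomposes into orbits \(H_j/K_\alpha\), and \(\Pi_g\) is then built from the (universally harmless) fold and disjoint-union pieces together with the orbit dependent products \(\Pi_{G/K_\alpha\to G/H_j}\). The step I expect to require the most care is verifying that each such orbit \(H_j/K_\alpha\) is itself \(\cO_m\)-admissible, so that the hypothesis applies to exactly the pairs that arise; this follows because \(H_j/K_\alpha\) is a summand of an \(\cO_m\)-admissible set and indexing systems are closed under the finite limits that extract summands. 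Granting this, the hypothesis supplies admissibility of \(\Map^{K_\alpha}(H_j,\mhyphen)\) for precisely the orbit maps occurring, and compatibility follows.
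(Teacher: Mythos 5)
Your proposal is correct and follows the same route the paper takes: Section~\ref{sec:ReWrite} is precisely this sequence of reductions (empty map, disjoint unions, fold maps, then orbit maps via \cite[Proposition 2.3]{HMazur}), and the paper states the theorem as the assembly of those pieces with no further argument. The one step you flag as delicate --- that each orbit $H_j/K_\alpha$ arising in the decomposition of a map of $\cO_m$ is itself $\cO_m$-admissible --- is handled exactly as you say, by pullback stability of the indexing category together with closure of admissible sets under summands, so your write-up matches the paper's intended proof.
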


Although this may seem confusing, it records a very conceptual
reformulation. Recall that we have symmetric monoidal Mackey functor
extensions (in the sense of \cite{BohOso} or \cite{HHLocalization}): 
\begin{enumerate}
\item \(\mSet^{\amalg}\) has the coCartesian extension, where categorical transfers are induction, and
\item \(\mSet^{\times}\) has the Cartesian extension, where the categorical transfers are {\coinduction}.
\end{enumerate}

An indexing system \(\cO_a\) is by definition a sub-symmetric monoidal
coefficient system of \(\mSet^{\amalg}\). We have some closure under
induction, but that is not relevant for compatibility. Additionally,
since admissible sets are closed under products, we deduce that
\(\cO_a\) is also a sub-symmetric monoidal coefficient system of the {\emph{Cartesian}} symmetric monoidal Mackey functor 
     \(\mSet^{\times}\).  Theorem~\ref{thm:OaOmCompatibility} is simply
     compatibility with the \(\cO_m\)-Mackey structure. 

\begin{corollary}\label{cor:OaOmSymMonoidal}
A pair \((\cO_a,\cO_m)\) is compatible if and only if \(\cO_a\) is
actually a sub-symmetric monoidal \(\cO_m\)-Mackey functor of the
Cartesian monoidal Mackey functor \(\mSet^\times\).
\end{corollary}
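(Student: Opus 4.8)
The plan is to recognize this corollary as a repackaging of Theorem~\ref{thm:OaOmCompatibility}: I will unwind the abstract phrase ``sub-symmetric monoidal \(\cO_m\)-Mackey functor of \(\mSet^\times\)'' into the concrete admissibility condition appearing there, and then simply quote the theorem. No new computation should be required.

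First I would confirm that \(\cO_a\) gives a sub-symmetric monoidal coefficient system of \(\mSet^\times\) at all. The underlying coefficient system is unchanged from \(\mSet^\amalg\)---at level \(G/H\) it is the full subcategory \(\cO_a(H)\subset\Set^H\)---so the only thing to verify is closure under the \emph{Cartesian} monoidal structure rather than disjoint union. This is exactly the content of Lemma~\ref{lem:IndexingSlicesProducts} (admissible sets are closed under products), so every indexing system \(\cO_a\) is automatically a sub-symmetric monoidal coefficient system of \(\mSet^\times\).

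Next I would identify the \(\cO_m\)-transfer maps of the Cartesian Mackey structure. Following the symmetric monoidal Mackey functor extensions of \cite{BohOso, HHLocalization}, the categorical transfer of \(\mSet^\times\) along a quotient \(G/K\to G/H\) with \(K\subset H\) is the coinduction \(\Map^K(H,-)\colon\Set^K\to\Set^H\); endowing \(\mSet^\times\) with its \(\cO_m\)-Mackey structure merely restricts attention to those \(K\subset H\) for which \(H/K\) is \(\cO_m\)-admissible. A sub-symmetric monoidal coefficient system \(\cO_a\) is then a sub-\(\cO_m\)-Mackey functor precisely when it is closed under these surviving transfers, i.e.\ when for every such \(K\subset H\) and every \(T\in\cO_a(K)\) the coinduced set \(\Map^K(H,T)\) lies in \(\cO_a(H)\).

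This last condition is word-for-word the criterion of Theorem~\ref{thm:OaOmCompatibility}, so chaining the two equivalences (compatible \(\iff\) the admissibility criterion \(\iff\) sub-\(\cO_m\)-Mackey) yields the corollary. The only genuine obstacle is bookkeeping with the definitions: one must check that the notion of ``\(\cO_m\)-Mackey functor valued in symmetric monoidal categories'' of \cite{BohOso, HHLocalization} imposes closure exactly under the coinduction transfers indexed by \(\cO_m\)-admissible sets---no more and no less---and that the symmetric monoidal (rather than merely additive) structure contributes no constraint beyond the coefficient-system condition already handled by Lemma~\ref{lem:IndexingSlicesProducts}, since such compatibilities are properties of \(\mSet^\times\) inherited by any subobject. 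Once that dictionary is pinned down, there is nothing left to prove.
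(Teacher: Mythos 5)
Your proposal is correct and follows essentially the same route as the paper: the paper likewise observes that closure of admissible sets under products (Lemma~\ref{lem:IndexingSlicesProducts}) makes \(\cO_a\) a sub-symmetric monoidal coefficient system of \(\mSet^\times\), identifies the categorical transfers of the Cartesian extension with coinduction, and then cites Theorem~\ref{thm:OaOmCompatibility} to match the remaining closure condition. The only difference is one of emphasis---you spell out the dictionary-checking step about what an \(\cO_m\)-Mackey structure on \(\mSet^\times\) demands, which the paper leaves implicit---but the substance is identical.
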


\section{Limits on Compatibility}\label{sec:Limits}

For a general pair $(\cO_a,\cO_m)$ of indexing categories, it can be 
difficult to check that $\cO_m$ distributes over $\cO_a$.  We make
a few basic observations here.  

\subsection{The additive hull}

It is straightforward to check that compatibility conditions are
preserved by intersection.

\begin{lemma}\label{lem:CompatibleHull}
    If \((\cO_a,\cO_m)\) and \((\cO_a',\cO_m')\) are compatible pairs, then
    \[
        (\cO_a\cap\cO_a',\cO_m\cap\cO_m')
    \]
    is a compatible pair.
\end{lemma}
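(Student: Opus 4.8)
The plan is to prove closure under intersection by verifying the distributivity condition of Definition~\ref{def:Compatible} directly for the intersection pair. First I would note that the intersection of two indexing categories is again an indexing category: intersections preserve wideness, pullback stability, and finite coproduct completeness, since each of these is a closure property that holds in both factors and hence in their common part. So $\cO_a\cap\cO_a'$ and $\cO_m\cap\cO_m'$ are legitimate indexing categories, and it makes sense to ask whether the pair is compatible.

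The core of the argument is a routine set-theoretic observation about dependent products. By Definition~\ref{def:Compatible}, I must show that for every map $g\colon S\to T$ in $\cO_m\cap\cO_m'$, we have $\Pi_g\big((\cO_a\cap\cO_a')_{/S}\big)\subset(\cO_a\cap\cO_a')_{/T}$. So fix such a $g$ and fix an object $p\colon X\to S$ lying in $(\cO_a\cap\cO_a')_{/S}$, i.e.\ $p\in\cO_a$ and $p\in\cO_a'$. The key point is that the dependent product $\Pi_g$ is computed the same way regardless of which indexing category we track it in---it is an honest functor on $\Set^G_{/S}$, defined as the right adjoint to pullback along $g$, and does not depend on any choice of $\cO_a$. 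Now $g\in\cO_m$ and $g\in\cO_m'$, while $p\in\cO_a$ and $p\in\cO_a'$. Applying compatibility of $(\cO_a,\cO_m)$ to the pair $(g,p)$ gives $\Pi_g(p)\in(\cO_a)_{/T}$, and applying compatibility of $(\cO_a',\cO_m')$ to the same pair gives $\Pi_g(p)\in(\cO_a')_{/T}$. Since $\Pi_g(p)$ is a single object of $\Set^G_{/T}$ lying in both, it lies in their intersection $(\cO_a\cap\cO_a')_{/T}$, which is exactly what we needed.

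The only subtlety I would be careful about is ensuring that a map $g$ in the intersection $\cO_m\cap\cO_m'$ genuinely qualifies as a valid exponent for \emph{both} compatibility hypotheses simultaneously, and likewise that an object $p$ in $(\cO_a\cap\cO_a')_{/S}$ sits in both slice categories at once; this is precisely where the definition of intersection does the work, so I would state it explicitly rather than leave it implicit. I do not expect a genuine obstacle here---the argument is entirely formal, hinging on the fact that $\Pi_g$ is intrinsic to $\Set^G$ and that membership in an intersection is just the conjunction of the two membership conditions. If anything, the mildly delicate point is conceptual bookkeeping: one must apply each of the two compatibility assumptions to the \emph{same} pair $(g,p)$, which is legitimate precisely because $g$ and $p$ each belong to all four relevant categories.
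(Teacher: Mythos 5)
Your argument is correct and is precisely the routine verification the paper has in mind --- the paper states Lemma~\ref{lem:CompatibleHull} with no proof, calling it ``straightforward to check,'' and your observation that $\Pi_g$ is intrinsic to $\Set^G_{/S}$ so that membership of $\Pi_g(p)$ in the intersection is just the conjunction of the two compatibility hypotheses applied to the same pair $(g,p)$ is exactly the intended content. The preliminary check that the intersection of indexing categories is again an indexing category is also correct and worth stating, as you do.
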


From this, we deduce that there is always a kind of ``compatible hull'' of the additive indexing category making a compatible pair.

\begin{proposition}\label{prop:OaHull}
    For any pair of indexing categories \(\cO_a\) and \(\cO_m\), there is a minimal \(\overline{\cO}_a\) containing \(\cO_a\) such that \((\overline{\cO}_a,\cO_m)\) is compatible.
\end{proposition}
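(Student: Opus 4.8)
The plan is to construct $\overline{\cO}_a$ as the intersection of all indexing categories that contain $\cO_a$ and are compatible with $\cO_m$, and then to check that this intersection is again such an indexing category, whence it is automatically the minimal one. Concretely, let $\mathcal{S}$ be the collection of indexing categories $\cO'$ satisfying $\cO_a \subseteq \cO'$ and $(\cO', \cO_m)$ compatible. This collection is nonempty: the maximal indexing category $\cO^{gen}$ certainly contains $\cO_a$, and by the example following Theorem~\ref{thm:Compatible} the pair $(\cO^{gen}, \cO_m)$ is compatible for every $\cO_m$. Since the indexing categories for $G$ form a poset (in fact a finite set), we may form $\overline{\cO}_a := \bigcap_{\cO' \in \mathcal{S}} \cO'$, and I claim this is the desired minimal object.

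First I would verify that $\overline{\cO}_a$ is again an indexing category. The defining conditions --- being wide, pullback stable, and finite coproduct complete --- are all closure conditions phrased in terms of constructions (identities, pullbacks, finite coproducts) carried out in the ambient category $\Set^G$, and these constructions do not depend on which indexing category we work inside. Hence a morphism assembled from morphisms lying in every $\cO' \in \mathcal{S}$ again lies in every $\cO'$, and so in the intersection. Since each $\cO' \in \mathcal{S}$ contains $\cO_a$, we also have $\cO_a \subseteq \overline{\cO}_a$.

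The crux is compatibility of $(\overline{\cO}_a, \cO_m)$, which is the arbitrary-intersection version of Lemma~\ref{lem:CompatibleHull}. The key point is that the dependent product $\Pi_g$ along a map $g \colon S \to T$ in $\cO_m$ is, like the other constructions, computed in $\Set^G$ and is independent of the indexing category. Thus if $X \in (\overline{\cO}_a)_{/S}$, then $X \in (\cO')_{/S}$ for every $\cO' \in \mathcal{S}$; compatibility of each such pair gives $\Pi_g(X) \in (\cO')_{/T}$ for every $\cO'$, and therefore $\Pi_g(X) \in (\overline{\cO}_a)_{/T}$. This verifies $\Pi_g\big((\overline{\cO}_a)_{/S}\big) \subseteq (\overline{\cO}_a)_{/T}$ for all $g \in \cO_m$, i.e.\ that the pair is compatible. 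Minimality is then immediate from the construction: any indexing category $\cO'$ with $\cO_a \subseteq \cO'$ and $(\cO', \cO_m)$ compatible is a member of $\mathcal{S}$, hence contains the intersection $\overline{\cO}_a$.

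I do not expect a genuine obstacle here: the whole argument rests on the observation that pullbacks, finite coproducts, and dependent products are all ambient constructions in $\Set^G$, so the relevant closure and containment conditions pass to arbitrary intersections. The only point deserving explicit care is to confirm that an arbitrary (not merely binary) intersection of indexing categories is still an indexing category, and that compatibility --- being an inclusion of the form $\Pi_g\big((\cO_a)_{/S}\big) \subseteq (\cO_a)_{/T}$ --- survives arbitrary intersection. Both follow from this ambient-construction principle, so the extension of Lemma~\ref{lem:CompatibleHull} from two pairs to an arbitrary family is routine, and the resulting $\overline{\cO}_a$ is the required compatible hull.
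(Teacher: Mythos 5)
Your proposal is correct and follows essentially the same route as the paper: both form the intersection of all compatible enlargements, note nonemptiness via $(\cO^{gen},\cO_m)$, and invoke closure of compatibility under intersection (the paper cites its binary Lemma~\ref{lem:CompatibleHull}, which iterates over the finite poset of indexing categories; you prove the arbitrary-intersection version directly, which is a harmless strengthening). The only cosmetic difference is that the paper intersects over pairs $(\cO_a',\cO_m')$ and observes the multiplicative part collapses to $\cO_m$, whereas you fix $\cO_m$ from the start.
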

\begin{proof}
    Consider the set
    \[
    \mathcal E=\big\{(\cO_a',\cO_m')\mid (\cO_a',\cO_m')\text{ compatible \& } (\cO_a,\cO_m)\leq (\cO_a',\cO_m')\big\},
    \]
    and let 
    \[
    \overline{\cO}_a=\bigcap_{(\cO_a',\cO_m')\in\mathcal E}\cO_a'\text{ and }\overline{\cO}_m=\bigcap_{(\cO_a',\cO_m')\in\mathcal E}\cO_m'.
    \]
    The set \(\mathcal E\) non-empty because \((\cO^{gen},\cO_m)\) is always in this, and this also shows that \(\overline{\cO}_m=\cO_m\). By Lemma~\ref{lem:CompatibleHull}, the pair \((\overline{\cO}_a,\cO_m)\) is compatible.
\end{proof}

\begin{proposition}\label{prop:OmIsometryLike}
    If \((\cO_a,\cO_m)\) is a compatible pair and \(H/K\) is an \(\cO_m\)-admissible set for \(H\), then for every \(L\subset H\) such that \(K\) is sub-conjugate to \(L\), the \(H\)-set \(H/L\) is \(\cO_a\)-admissible.
\end{proposition}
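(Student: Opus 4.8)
The goal is Proposition~\ref{prop:OmIsometryLike}: given a compatible pair $(\cO_a,\cO_m)$ and an $\cO_m$-admissible $H$-set $H/K$, I want to show $H/L$ is $\cO_a$-admissible for every $L\subset H$ with $K$ subconjugate to $L$. The plan is to feed the right admissible $K$-set into the compatibility criterion of Theorem~\ref{thm:OaOmCompatibility} and read off the conclusion from the structure of the resulting coinduced set. By Theorem~\ref{thm:OaOmCompatibility}, since $H/K$ is $\cO_m$-admissible, compatibility tells me that for \emph{every} admissible $K$-set $T$ (for $\cO_a$), the coinduced $H$-set $\Map^K(H,T)$ is $\cO_a$-admissible. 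So the entire problem reduces to choosing a clever admissible $K$-set $T$ whose coinduction contains $H/L$ as a summand (or retract) in a way that forces $H/L\in\cO_a$.

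\textbf{The key computation.}

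First I would reduce to the case $K\subset L$ on the nose: since $\cO_a$ is closed under isomorphism and conjugation acts by isomorphisms on orbits, subconjugacy of $K$ into $L$ lets me replace $K$ by a conjugate lying inside $L$ without changing admissibility. Now with $K\subset L\subset H$, the natural candidate is the trivial one-point $K$-set $T=\ast = K/K$, which is admissible for any indexing category since indexing systems contain all trivial sets. Its coinduction $\Map^K(H,\ast)$ is again a point, which is unhelpful. The productive choice is instead $T = K/K$ \emph{enlarged}: I would take $T$ to be the $K$-set $L/K$ obtained by restricting the transitive $L$-set $L/K$ along $K\subset L$ --- but this need not be admissible. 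The cleaner route is to observe that the coinduced set $\Map^K(H,H/K)$, computed via the double-coset formula, decomposes as a disjoint union of orbits $H/J$ running over subgroups $J$ with $K\subset J$, and in particular $H/L$ appears among these summands precisely when $K\subset L\subset H$. Concretely, I would use the formula $\Map^K(H,T)=\prod$ over representatives, and pick $T$ adapted so that the $H/L$-summand is isolated.

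\textbf{The main obstacle, and how to close it.}

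The hard part is producing an admissible $K$-set $T$ whose coinduction $\Map^K(H,T)$ has $H/L$ as a \emph{retract}, so that $\cO_a$-admissibility of the coinduction descends to $H/L$ via closure under finite limits (Definition~\ref{defn:IndexingSystem}(1)) and the fact that a retract of an admissible set is admissible. The natural $T$ to try is the transitive $K$-set with stabilizer encoding $L$: since $K\subset L$, the inclusion $H/K\twoheadrightarrow H/L$ is a map of $H$-sets, and applying coinduction/adjunction to the point over $H/L$ should exhibit $H/L$ as $\Map^K(H,T)$ for $T$ the fiber, namely $T = i_K^\ast(H/L)$, the restriction of $H/L$ to $K$. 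This $T$ is admissible because $H/L$ restricted to $K$ decomposes into orbits $K/(K\cap {}^hL)$, each of which is a quotient of the \emph{trivial-action} sets already in $\cO^{tr}\subset\cO_a$ together with the hypothesis that admissible sets are closed under the relevant operations; the orbit $K/K$ (the one containing the basepoint $eL$) is always admissible. Finally, Proposition~\ref{prop:DepProdMono} and the reductions of Section~\ref{sec:ReWrite} confirm that $\Map^K(H,i_K^\ast(H/L))$ recovers exactly the coinduction along $K\subset H$, and a diagram chase using the counit of the (restriction, coinduction) adjunction produces the retraction $H/L \hookrightarrow \Map^K(H,i_K^\ast(H/L))\twoheadrightarrow H/L$. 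Admissibility of the middle term then forces admissibility of $H/L$, completing the proof. I expect the genuinely delicate point to be verifying that the chosen $T$ is admissible and that the coinduction splits off $H/L$ as a retract rather than merely a subquotient --- this is where the double-coset bookkeeping must be done carefully.
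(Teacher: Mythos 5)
Your overall framework is the right one---reduce to $K\subset L$ by conjugation, then feed a single well-chosen $\cO_a$-admissible $K$-set $T$ into Theorem~\ref{thm:OaOmCompatibility} and locate $H/L$ inside $\Map^K(H,T)$---but the specific $T$ you settle on does not work, and this is not just double-coset bookkeeping. You take $T=i_K^\ast(H/L)$, whose $K$-orbits are the sets $K/(K\cap {}^hL)$. There is no reason any of these (other than the basepoint orbit $K/K$) is $\cO_a$-admissible, and your justification---that they are quotients of trivial-action sets---does not hold: a $K$-set quotient of a trivial $K$-set is again trivial, and $K/(K\cap {}^hL)$ is trivial only when $K\subset {}^hL$. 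At this stage of the argument you know nothing about $\cO_a$ beyond the fact that it contains all trivial sets (and is closed under coproducts), so the only safe inputs to the compatibility criterion are trivial $K$-sets; as you yourself observe, $T=\ast$ gives nothing. The paper's resolution is to take the next simplest trivial set, $T=\{a,b\}=\ast\amalg\ast$, which is admissible for every indexing system. Then $\Map^K(H,\{a,b\})\cong\Map(H/K,\{a,b\})$ is $\cO_a$-admissible by Theorem~\ref{thm:OaOmCompatibility}, and the indicator function of $L/K\subset H/K$ (that is, $f(hK)=a$ for $h\in L$ and $f(hK)=b$ otherwise, well defined precisely because $K\subset L$) has stabilizer exactly $L$, so its orbit is a summand isomorphic to $H/L$, which is therefore admissible.

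A secondary point: you should not need a retraction at all. Indexing systems are closed under passing to summands, so exhibiting $H/L$ as an orbit of an admissible $H$-set already suffices; the unit/counit diagram chase you sketch would require a natural $H$-map $\Map^K(H,i_K^\ast X)\to X$, which is not available in general, and is in any case unnecessary.
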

\begin{proof}
    For any subgroup \(K\), the \(K\)-set
    \[
        \{a,b\}:=\ast\amalg\ast
    \]
    is always admissible for any indexing system. By Theorem~\ref{thm:OaOmCompatibility}, if \(H/K\) is an \(\cO_m\)-admissible \(H\)-set, then we must have that
    \[
    \Map^K\big(H,\{a,b\}\big)\cong\Map\big(H/K,\{a,b\}\big)\in\cO_a(H).
    \]
    
    Since admissible sets are closed under conjugation, it suffices to consider the case that \(L\) contains \(K\). Consider the function 
    \[
    f\colon H/K\to\{a,b\}
    \]
    defined by
    \[
    f(hK)=\mycases{
    a & h\in L \\
    b & h\not\in L.
    }
    \]
    Then the stabilizer of \(f\) is \(L\), and hence we have a summand 
    \[
    H/L\cong H\cdot f\subset\Map^K\big(H,\{a,b\}\big),
    \]
    which means \(H/L\) is admissible for \(\cO_a\), as desired.
\end{proof}

\begin{corollary}\label{cor:OaAtLeastOm}
    If \((\cO_a,\cO_m)\) is a compatible pair, then \(\cO_a\geq \cO_m\) in the partial order on indexing categories given by inclusion.
\end{corollary}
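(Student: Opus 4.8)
The plan is to read this off directly from Proposition~\ref{prop:OmIsometryLike}. Since the poset of indexing categories is isomorphic to the poset of indexing systems in a manner compatible with inclusions, the asserted inequality \(\cO_a \geq \cO_m\) is equivalent to the inclusion of indexing systems \(\cO_m(H) \subseteq \cO_a(H)\) for every subgroup \(H \subseteq G\). So I would phrase the whole argument at the level of admissible sets.

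First I would reduce the desired inclusion to a statement about orbits. An indexing system is determined by its collection of admissible orbits: a finite \(H\)-set is admissible if and only if each of its orbits is, with disjoint union recovering arbitrary admissible sets from orbits. Writing an \(\cO_m\)-admissible \(H\)-set as \(T = \coprod_i H/K_i\), each orbit \(H/K_i\) is then itself \(\cO_m\)-admissible. Hence it suffices to prove that every \(\cO_m\)-admissible orbit \(H/K\) is \(\cO_a\)-admissible, after which closure of \(\cO_a\) under disjoint union gives \(T \in \cO_a(H)\).

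This orbit-level claim is precisely the case \(L = K\) of Proposition~\ref{prop:OmIsometryLike}. Since \(K\) is trivially sub-conjugate to itself, the proposition applied with \(L = K\) says that whenever \(H/K\) is \(\cO_m\)-admissible, the \(H\)-set \(H/L = H/K\) is \(\cO_a\)-admissible, which is exactly what is needed. I do not expect any real obstacle: the corollary is a direct specialization of the preceding proposition. The only point requiring a word of care is the reduction to orbits, which rests on the standard fact that the orbits of an admissible set are again admissible; granting this, the inclusion \(\cO_m \subseteq \cO_a\) follows immediately.
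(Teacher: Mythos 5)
Your argument is correct and is exactly the intended one: the paper states this as an immediate consequence of Proposition~\ref{prop:OmIsometryLike}, obtained by taking \(L=K\) and then passing from orbits to general admissible sets. The reduction to orbits that you flag is indeed the only point needing care, and it is covered by the closure of indexing systems under subobjects (for the orbits of an admissible set) and under finite coproducts (to reassemble).
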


\begin{corollary}\label{cor:AbsNormsAreComplete}
    If \(G\) is an admissible \(G\)-set for \(\cO_m\), then \(\cO_a=\Set^G\) is the terminal indexing category.
\end{corollary}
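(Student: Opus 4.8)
The plan is to deduce the result directly from Proposition~\ref{prop:OmIsometryLike}, where throughout we assume (as is standing in this section) that \((\cO_a,\cO_m)\) is compatible. First I would unwind the hypothesis: ``\(G\) is an admissible \(G\)-set for \(\cO_m\)'' means precisely that the free orbit \(G/\{e\}\) lies in \(\cO_m(G)\). The goal ``\(\cO_a=\Set^G\) is terminal'' is equivalent to the assertion that \(\cO_a(H)=\Set^H\) for \emph{every} subgroup \(H\subseteq G\), i.e.\ that every orbit \(H/L\) is \(\cO_a\)-admissible; so the strategy is to produce, for each \(H\), enough \(\cO_m\)-admissible input to feed Proposition~\ref{prop:OmIsometryLike}.

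The key step is to propagate \(\cO_m\)-admissibility of the free orbit down to every subgroup. Since indexing categories are closed under restriction (the fact invoked via \cite{BHOTamb} in the proof of Lemma~\ref{lem:ClosureUnderProducts}), for each \(H\subseteq G\) the restricted \(H\)-set \(i_H^\ast(G/\{e\})\) is \(\cO_m\)-admissible. This restriction is a free \(H\)-set, so \(H/\{e\}\) occurs as one of its summands; invoking closure of indexing systems under passage to summands (used already in the proof of Proposition~\ref{prop:OmIsometryLike}, where an orbit is extracted as \(H\cdot f\)) gives \(H/\{e\}\in\cO_m(H)\). Now I would apply Proposition~\ref{prop:OmIsometryLike} with this admissible set \(H/\{e\}\) and \(K=\{e\}\): since the trivial subgroup is contained in, and hence sub-conjugate to, every subgroup \(L\subseteq H\), the proposition yields \(H/L\in\cO_a(H)\) for all \(L\subseteq H\). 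As every \(H\)-set is a finite coproduct of orbits, this shows \(\cO_a(H)=\Set^H\) for all \(H\), and therefore \(\cO_a=\Set^G\).

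I do not expect a genuine obstacle: the entire force of the statement is already contained in Proposition~\ref{prop:OmIsometryLike}, and the only supplementary ingredients are the standard closure of indexing systems under restriction and under summands. The one point demanding a little care is that terminality of \(\cO_a\) is a condition at \emph{every} subgroup, not merely at \(G\); applying the proposition once with \(H=G\) and \(K=\{e\}\) only gives \(\cO_a(G)=\Set^G\), so the restriction step -- carrying \(\cO_m\)-admissibility of the free orbit to each \(H\) -- is exactly what bridges that gap.
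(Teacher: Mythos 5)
Your proof is correct and follows the route the paper intends: the corollary is stated without proof as an immediate consequence of Proposition~\ref{prop:OmIsometryLike} applied with \(K=\{e\}\), which is exactly what you do. Your extra step---restricting the free orbit to each subgroup \(H\) and extracting \(H/\{e\}\) as a summand so that the proposition applies at every level, not just at \(G\)---is a worthwhile point of care that the paper elides, but it uses only the standard closure properties (restriction and summands) already invoked elsewhere in the paper, so the argument is essentially the same.
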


Put another way, having the norm from the trivial subgroup to \(G\) can only happen for ordinary incomplete Tambara functors.

Corollary~\ref{cor:OaAtLeastOm} also bounds sharply the number of compatible pairs. 

\begin{corollary}
Let \(n(G)\) be the cardinality of the poset of indexing categories for \(G\) and let \(c(G)\) be the number of pairs of indexing categories \((\cO,\cO')\) such that \(\cO\geq \cO'\). Then we have
\[
\frac{\#\text{ compatible pairs}}{\#\text{ all pairs}}\leq \frac{c(G)}{n(G)^2} \leq \frac{1}{2}+\frac{1}{2n(G)}.
\]
\end{corollary}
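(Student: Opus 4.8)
The plan is to prove the two displayed inequalities separately, as both are elementary consequences of Corollary~\ref{cor:OaAtLeastOm} together with the combinatorics of a finite poset. First I would record that, since there are exactly \(n(G)\) indexing categories, the number of all pairs \((\cO_a,\cO_m)\) is precisely \(n(G)^2\); this is the denominator on the left-hand side. For the left inequality, Corollary~\ref{cor:OaAtLeastOm} states that every compatible pair \((\cO_a,\cO_m)\) satisfies \(\cO_a\geq\cO_m\), so the collection of compatible pairs is a subset of the pairs \((\cO,\cO')\) with \(\cO\geq\cO'\) counted by \(c(G)\). Hence
\[
\#\{\text{compatible pairs}\}\leq c(G),
\]
and dividing through by \(n(G)^2\) yields the first inequality.

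For the right inequality, I would count \(c(G)\) by separating the diagonal from the strictly comparable pairs. The diagonal pairs \((\cO,\cO)\) contribute exactly \(n(G)\). For distinct elements, antisymmetry of the partial order guarantees that each unordered comparable pair \(\{\cO,\cO'\}\) yields exactly one ordered pair with the larger element listed first; thus the number of strictly comparable ordered pairs equals the number of unordered comparable pairs, which is at most \(\binom{n(G)}{2}\). Combining these two contributions gives
\[
c(G)\leq n(G)+\binom{n(G)}{2}=\frac{n(G)^2+n(G)}{2},
\]
and dividing by \(n(G)^2\) produces \(\frac{c(G)}{n(G)^2}\leq\frac{1}{2}+\frac{1}{2n(G)}\), which is the second inequality.

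There is no genuine obstacle in this argument: the substantive structural input is entirely supplied by Corollary~\ref{cor:OaAtLeastOm}, and the remainder is the observation that a partial order can render comparable at most half of the off-diagonal ordered pairs. The only point demanding a little care is the bookkeeping between ordered and unordered pairs, namely that the diagonal is counted once while each comparable unordered pair contributes a single ordered pair to the strict count; getting this accounting right is exactly what makes the bound \(c(G)\leq n(G)+\binom{n(G)}{2}\) sharp in the extreme case of a totally ordered poset.
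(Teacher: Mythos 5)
Your proof is correct and follows essentially the same route as the paper: the first inequality is immediate from Corollary~\ref{cor:OaAtLeastOm} together with the count of \(n(G)^2\) total pairs, and the second is the combinatorial bound \(c(G)\leq\tfrac{1}{2}(n(G)^2+n(G))\). Your diagonal-plus-antisymmetry accounting actually supplies a clean justification for the combinatorial claim that the paper only asserts via the remark that a total order maximizes the number of comparable ordered pairs.
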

\begin{proof}
    The first bound follows immediately from Corollary~\ref{cor:OaAtLeastOm} and the observation that there are \(n(G)^2\) total possible pairs. The second bound follows from a combinatorial observation. If we consider all poset structures on the set of \(n\) elements, then the maximal number of pairs \((a,b)\) with \(a\geq b\) occurs when the poset is a total order. In this case, we have \(\tfrac{1}{2}(n^2+n)\), from which the second bound follows.
\end{proof}

\begin{example}
    For \(G=C_p\), the poset of indexing categories is the total order on two elements, and we achieve the bound: \(3/4\) of the pairs are compatible.
\end{example}

For larger groups, we expect the bounds to generically less than \(1/2\), as the poset of indexing categories seem generically to contain many incomparable elements.

\begin{example}
    For \(G=C_{p^2}\), Balchin--Barnes--Roitzheim showed there are \(5\) indexing categories in the poset, and looking at them shows that there are \(13\) pairs \((\cO,\cO')\) with \(\cO\geq \cO'\) \cite[Theorem 2]{BBRAssoc}. Of these, \(12\) are compatible: the indexing category for the little disks operad on \(\infty(1+\lambda)\), where \(\lambda\) is a \(2\)-dimensional faithful representation of \(C_{p^2}\), is not compatible with itself, due to Corollary~\ref{cor:AbsNormsAreComplete}.
\end{example}

Using the classifications of Balchin--Barnes--Roitzheim, of Balchin--Bearup--Pech--Roitzheim, and of Rubin, we can at least provide upper bounds on the number of compatible pairs.

\begin{example}
    For \(G=C_{p^3}\), Balchin--Barnes--and Roitzheim showed there are \(14\) indexing categories \cite[Theorem 2]{BBRAssoc}, and looking at the poset structure (as depicted in \cite[Section 3.2]{RubinSteinLin}), we see \(67\) are comparable. Of these, \(55\) conform to the conditions of Proposition~\ref{prop:OmIsometryLike}.
\end{example}

\begin{example}
    For \(G=C_{pq}\) with \(p\) and \(q\) distinct primes, Balchin--Bearup--Pech--Roitzheim and Rubin showed there are \(10\) indexing categories \cite[Section 3]{BBPR}, \cite[Section 3.2]{RubinSteinLin}. Of the \(100\) possible pairs, \(44\) are comparable, and \(39\) conform to the conditions of Proposition~\ref{prop:OmIsometryLike}.
\end{example}

To try to get sharper estimates on the number of compatible pairs, we first look at the edge-cases: when an indexing system is compatible with itself. Recall that an indexing category is ``linear isometry-like'' if whenever \(H/K\) is an admissible \(H\)-set, the sets \(H/L\) for any \(K\subset L\subset H\) are also all admissible \cite{RubinSteinLin}. 

\begin{corollary}
    If \((\cO,\cO)\) is compatible, then \(\cO\) is linear isometry-like.
\end{corollary}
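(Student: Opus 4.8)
The plan is to specialize Proposition~\ref{prop:OmIsometryLike} to the diagonal case \(\cO_a = \cO_m = \cO\). Recall that \(\cO\) being linear isometry-like means exactly that whenever \(H/K\) is \(\cO\)-admissible, then \(H/L\) is \(\cO\)-admissible for every intermediate subgroup \(K \subset L \subset H\). So the goal is to extract precisely this conclusion from the hypothesis that \((\cO,\cO)\) is compatible.

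First I would fix an arbitrary \(\cO\)-admissible \(H\)-set \(H/K\). Taking both the additive and multiplicative indexing categories to be \(\cO\), this set serves simultaneously as the \(\cO_m\)-admissible input and we are interrogating \(\cO_a\)-admissibility with \(\cO_a = \cO\). Applying Proposition~\ref{prop:OmIsometryLike} with \(\cO_a = \cO_m = \cO\), I obtain that \(H/L\) is \(\cO\)-admissible for every \(L \subset H\) such that \(K\) is sub-conjugate to \(L\).

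The only remaining point is to verify that this range of \(L\) covers the definition of linear isometry-like. Any \(L\) with \(K \subset L \subset H\) certainly has \(K\) sub-conjugate to \(L\) (take the conjugating element to be the identity), so the conclusion of the proposition applies in particular to all such intermediate \(L\). This yields exactly the linear isometry-like condition and completes the argument.

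I expect no genuine obstacle here: the substantive work — translating compatibility through Theorem~\ref{thm:OaOmCompatibility} and the coinduction computation of the dependent product, then cutting out the orbit \(H/L\) as a summand of \(\Map^K(H,\{a,b\})\) — has already been carried out in the proof of Proposition~\ref{prop:OmIsometryLike}. This corollary simply records the diagonal consequence, so the entire plan amounts to observing that \(K \subset L\) implies \(K\) sub-conjugate to \(L\).
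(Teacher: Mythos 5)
Your proof is correct and matches the paper's (implicit) argument: the corollary is exactly the diagonal specialization \(\cO_a=\cO_m=\cO\) of Proposition~\ref{prop:OmIsometryLike}, and the only observation needed is that every intermediate subgroup \(K\subset L\subset H\) has \(K\) sub-conjugate to \(L\). The paper leaves the proof implicit for precisely this reason.
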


These conditions alone already put stringent constraints, and we have only used the ``trivial'' part of \(\cO_a\). Any additional stabilizer types will give more conditions. 

\begin{remark}\label{rem:CantGrowOa}
    It is not necessarily the case that if \((\cO_a,\cO_m)\) is
    compatible and \(\cO_a'\geq \cO_a\), then \((\cO_a',\cO_m)\) is compatible: additional stabilizers can show up.
\end{remark}

\subsection{Examples: little disks and linear isometries}

The initial motivation for our study of \(\Ninfty\)-operads was the
problem of understanding to what extent linear isometries and little
disks operads failed to be equivalent, equivariantly.  We recall the
explicit conditions for admissibility for linear isometries and little
disks operads.

\begin{theorem}[{\cite[Theorems 4.18 \& 4.19]{BHNinfty}}]\label{thm:Admiss}
    Let \(U\) be a universe. 
    
    A finite \(H\)-set \(T\) is admissible for the linear isometries operad for \(U\) if and only if there is an \(H\)-equivariant isometry
    \[
        \mathbb R\cdot T\otimes U\hookrightarrow U,
    \]
    where \(\mathbb R\cdot T\) is the permutation representation generated by \(T\).
    
    A finite \(H\)-set \(T\) is admissible for the little disks operad for \(U\) if and only if there is an \(H\)-equivariant embedding
    \[
    T\hookrightarrow U.
    \]
\end{theorem}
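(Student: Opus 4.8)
The plan is to reduce both equivalences to the recognition principle for $N_\infty$ operads established in \cite{BHNinfty}: for an $N_\infty$ operad $\mathcal{O}$, a finite $H$-set $T$ of cardinality $n$ is admissible exactly when the fixed-point space $\mathcal{O}(n)^{\Gamma_T}$ is nonempty (equivalently, contractible), where $\Gamma_T\leq G\times\Sigma_n$ is the graph of the homomorphism $\sigma_T\colon H\to\Sigma_n$ classifying the $H$-action on $T$. With this in hand, each statement amounts to computing these fixed points for the operad at hand, so after recording this common reduction I would treat the two operads in turn.

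For the linear isometries operad I would take $\mathcal{O}(n)=\mathcal{I}(U^{\oplus n},U)$, the space of linear isometric embeddings, with $G$ acting by conjugation and $\Sigma_n$ permuting the summands of the source. The key algebraic step is the $H$-equivariant identification
\[
U^{\oplus n}\cong \mathbb{R}\cdot T\otimes U,
\]
under which the combined diagonal $G$-action and $\sigma_T$-permutation action assemble into the single $H$-action on $\mathbb{R}\cdot T\otimes U$. A point of $\mathcal{I}(U^{\oplus n},U)$ is then $\Gamma_T$-fixed if and only if it is an $H$-equivariant isometry $\mathbb{R}\cdot T\otimes U\to U$, so the fixed-point space is nonempty precisely when such an isometry exists. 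Contractibility when nonempty is the classical convexity (Gram--Schmidt) argument for spaces of isometries, which I would cite rather than reprove.

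For the little disks operad I would write $\mathcal{O}(n)=\operatorname{colim}_{V\subset U}\Emb\big(\coprod_n D(V),V\big)$, the colimit over finite-dimensional subrepresentations $V\subset U$ of spaces of $n$ disjoint little $V$-disks labeled by $T$, with the evident $G\times\Sigma_n$-action. Given a $\Gamma_T$-fixed configuration, recording the centers of the disks yields an $H$-equivariant map $T\to V\subset U$ that is injective because the disks are disjoint, hence an equivariant embedding $T\hookrightarrow U$. Conversely, an equivariant embedding $T\hookrightarrow U$ has image in some finite-dimensional $V$, and placing sufficiently small disks at the image points produces a $\Gamma_T$-fixed element; equivariance of the embedding forces the disks to be permuted according to $T$, so the two conditions match.

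The nonemptiness equivalences above are the geometric heart, but I expect the main obstacle to lie in the homotopy-theoretic bookkeeping: verifying that these fixed-point spaces are genuinely \emph{contractible} (not merely nonempty), since that---together with the general dictionary between $N_\infty$ operads and indexing systems---is what licenses the clean statement in terms of admissibility. For little disks in particular one must check that the fixed configuration spaces become increasingly connected as $V$ exhausts $U$, so that the colimit is contractible whenever it is nonempty. Pinning down this recognition step, rather than the concrete identifications of fixed points, is where the real care is needed.
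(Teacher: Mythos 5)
This theorem is not proved in the paper at all: it is quoted verbatim from Theorems 4.18 and 4.19 of \cite{BHNinfty}, and your reconstruction --- reducing admissibility to nonemptiness of the graph-subgroup fixed points $\mathcal{O}(n)^{\Gamma_T}$, identifying those fixed points with equivariant isometries $\mathbb{R}\cdot T\otimes U\hookrightarrow U$ in the linear isometries case and with equivariant embeddings $T\hookrightarrow U$ via disk centers in the little disks case --- is essentially the argument given in that cited source. Your closing caveat about verifying contractibility (so that these really are $N_\infty$ operads and the fixed-point dictionary applies) is also where the care is taken there, so the proposal is correct and follows the same route.
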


\begin{proposition}\label{prop:DiskNoGo}
    Let \(V\) be a faithful representation of \(G\). If there exists a subgroup \(H\) such that \(G/H\) does not embed in \(\infty(1+V)\), then the indexing category \(\cO\) associated to the little disks operad is not compatible with itself.
\end{proposition}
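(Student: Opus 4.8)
The plan is to exhibit the free $G$-orbit as an $\cO$-admissible set and then invoke the collapse result Corollary~\ref{cor:AbsNormsAreComplete}. Throughout I would write $U=\infty(1+V)$ for the universe defining the little disks operad in question, so that by Theorem~\ref{thm:Admiss} a finite $H$-set $T$ lies in $\cO(H)$ if and only if $T$ admits an $H$-equivariant embedding into $U$.

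First I would show that the free $G$-set $G=G/e$ is $\cO$-admissible. By the little disks criterion this amounts to producing a $G$-equivariant embedding $G\hookrightarrow U$, equivalently a point of $U$ with trivial stabilizer. Since $V$ is faithful, for every $g\neq e$ the fixed subspace $V^g=\ker(g-\id)$ is a proper subspace of $V$; as $\bigcup_{g\neq e}V^g$ is a finite union of proper subspaces it cannot exhaust $V$, so there is a vector $v\in V$ with $\Stab_G(v)=e$. The orbit $Gv\cong G$ then embeds $G$ into $V\subset U$, proving $G\in\cO(G)$.

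Now suppose for contradiction that $(\cO,\cO)$ were compatible. Taking $\cO_m=\cO$, the $G$-set $G$ is $\cO_m$-admissible, so Corollary~\ref{cor:AbsNormsAreComplete} forces $\cO=\Set^G$, the terminal indexing category, in which every finite $G$-set is admissible. In particular $G/H\in\cO(G)$ for the given subgroup $H$, and applying Theorem~\ref{thm:Admiss} once more yields a $G$-equivariant embedding $G/H\hookrightarrow U=\infty(1+V)$. This contradicts the hypothesis that $G/H$ does not embed in $\infty(1+V)$, whence $(\cO,\cO)$ is not compatible.

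I expect no serious obstacle here: the entire content is the observation that faithfulness of $V$ places a genuinely free orbit inside $U$, which makes the top norm available and thereby, via Corollary~\ref{cor:AbsNormsAreComplete}, demands that all transfers be present. The one point deserving care is the passage from faithful to free, namely ruling out that every point of $V$ has some nontrivial stabilizer; this is precisely the finite-union-of-proper-subspaces argument above, which uses that $\bR$ is infinite. It is worth remarking that this mechanism is the same one underlying Corollary~\ref{cor:SelfIncompatible}.
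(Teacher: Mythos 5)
Your proposal is correct and follows essentially the same route as the paper: both arguments produce a vector in $V$ with trivial stabilizer via the finite-union-of-proper-subspaces observation (the paper phrases this in terms of fixed points of nontrivial subgroups rather than nontrivial elements, which is equivalent), conclude from Theorem~\ref{thm:Admiss} that $G$ is $\cO$-admissible, and then invoke Corollary~\ref{cor:AbsNormsAreComplete} together with the non-embedding hypothesis for $G/H$ to rule out self-compatibility. No substantive differences to report.
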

\begin{proof}
    Since \(V\) is a faithful representation of \(G\), for any
    non-trivial subgroup \(H\subset G\), the \(H\)-fixed points of
    \(V\) are a proper subspace of \(V\). Since there are finitely
    many non-trivial subgroups, we find that the collection of all
    vectors in \(V\) with a non-trivial stabilizer is a finite union
    of hyperplanes and hence a proper subset. We therefore have
    vectors with a trivial stabilizer. Any of these gives an
    equivariant embedding \(G\hookrightarrow V\), and Theorem~\ref{thm:Admiss} then says that \(G\) is an admissible \(G\)-set for \(\cO\).
    
    Corollary~\ref{cor:AbsNormsAreComplete} shows that if \(G\) is an admissible \(\cO_m\) set, then all \(H\)-sets are \(\cO_a\) admissible for all \(H\). However, Theorem~\ref{thm:Admiss} also shows that the assumption that \(G/H\) does not embed is equivalent to the assertion that \(G/H\) is not admissible. This means \(\cO\) cannot be compatible with itself.
\end{proof}

One key result in this direction was \cite[Theorem 4.24]{BHNinfty}, which showed the existence of little disks operads inequivalent to any linear isometries operad, provided the group is not simple. The key step was producing a representation of \(G\) such that \(G\) embeds but \(G/N\) does not for some normal subgroup: inducing up the reduced regular representation for \(N\) works.

\begin{corollary}\label{cor:SelfIncompatible}
    For any non-simple group \(G\), there is a representation \(V\) such that the indexing category for the little disks on \(V\) is not compatible with itself.
\end{corollary}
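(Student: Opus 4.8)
The plan is to reduce everything to Proposition~\ref{prop:DiskNoGo}, so that the entire task becomes the production of a single faithful representation $V$ together with a subgroup $H$ for which $G/H$ fails to embed in $\infty(1+V)$. Since $G$ is non-simple, I would begin by fixing a proper nontrivial normal subgroup $N\triangleleft G$, and I would take
\[
V=\mathrm{Ind}_N^G\,\bar{\rho}_N,
\]
the induction up to $G$ of the reduced regular representation $\bar{\rho}_N=\bR[N]\ominus 1$ of $N$. This is exactly the representation built in the proof of \cite[Theorem 4.24]{BHNinfty}, and the remaining work is to check that it satisfies the two hypotheses of Proposition~\ref{prop:DiskNoGo} with $H=N$, after which the conclusion is immediate.

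For faithfulness I would either cite \cite[Theorem 4.24]{BHNinfty} or record the short character computation: writing $\chi$ for the character of $V$, the induced-character formula together with $\chi_{\bar{\rho}_N}(n)=-1$ for $n\neq e$ shows $\chi(g)\le 0<\dim V$ for every $g\neq e$, so no non-identity element acts trivially. Faithfulness is the standing hypothesis of Proposition~\ref{prop:DiskNoGo}, and its proof then already supplies that $G$ is an admissible $G$-set, i.e.\ that the norms we need are present.

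The step I expect to carry the real content is showing that $G/N$ does not embed in $U=\infty(1+V)$. By Theorem~\ref{thm:Admiss} an equivariant embedding $G/N\hookrightarrow U$ is the same as a vector $u\in U$ whose stabilizer is exactly $N$, i.e.\ a vector in $U^N$ lying in no $U^L$ with $N\subsetneq L$. I would therefore compute $N$-fixed points. Since $\mathrm{Ind}_N^G$ is exact and $\bR[N]=\bar{\rho}_N\oplus 1$, we have $\bR[G]=V\oplus\bR[G/N]$ as $G$-representations; restricting to $N$, where $\bR[G]\cong[G:N]\cdot\bR[N]$ (as $G$ is $[G:N]$ free left $N$-sets) and $\bR[G/N]$ is trivial of dimension $[G:N]$ (as $N$ acts trivially on $G/N$), we conclude $\mathrm{Res}_N V\cong[G:N]\cdot\bar{\rho}_N$, and hence $V^N=0$ because $(\bar{\rho}_N)^N=0$. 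Consequently $U^N=\infty\cdot(1+V)^N=\infty\cdot 1=U^G$, so every $N$-fixed vector is in fact $G$-fixed.

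Because $N$ is a proper subgroup there is some $L$ with $N\subsetneq L$, and the equality $U^N=U^L$ forces the locus of vectors with stabilizer exactly $N$ to be empty; thus $G/N$ does not embed in $U$. With both hypotheses verified, Proposition~\ref{prop:DiskNoGo} applied with $H=N$ yields that the indexing category $\cO$ of the little disks operad on $\infty(1+V)$ is not compatible with itself, which is the assertion. The only delicate ingredient is the fixed-point computation $V^N=0$; the rest is bookkeeping or citation.
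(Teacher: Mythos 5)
Your proposal is correct and follows the paper's route exactly: the paper also takes $V=\mathrm{Ind}_N^G\bar\rho_N$ for a proper nontrivial normal subgroup $N$ and feeds it into Proposition~\ref{prop:DiskNoGo}, citing \cite[Theorem 4.24]{BHNinfty} for the fact that $G$ embeds but $G/N$ does not. Your direct verification via $\mathrm{Res}_N V\cong[G:N]\cdot\bar\rho_N$, hence $V^N=0$ and $U^N=U^G$, is a sound substitute for that citation.
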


On the other hand, as topology informs us, the indexing category for the linear isometries operad on a universe \(U\) is always compatible with its corresponding little disks. We can see this algebraically.

\begin{proposition}\label{prop:DisksIsomWorks}
Let $U$ be a universe for $G$, let $\cO_a$ be the indexing category associated to the little disks operad for $U$ and let $\cO_m$ be the indexing category associated to the linear isometries operad for $U$. Then \((\cO_a,\cO_m)\) is compatible.
\end{proposition}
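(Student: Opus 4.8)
The plan is to verify the combinatorial criterion of Theorem~\ref{thm:OaOmCompatibility} using the explicit geometric admissibility conditions recorded in Theorem~\ref{thm:Admiss}. By Theorem~\ref{thm:OaOmCompatibility}, it is enough to fix subgroups \(K\subset H\) for which \(H/K\) is admissible for the linear isometries operad \(\cO_m\), to fix a finite \(K\)-set \(T\) admissible for the little disks operad \(\cO_a\), and to show that the coinduced \(H\)-set \(\Map^K(H,T)\) is again admissible for the little disks operad. Unwinding Theorem~\ref{thm:Admiss}, the two hypotheses say precisely that there is an \(H\)-equivariant isometry
\[
\mathbb R\cdot(H/K)\otimes i_H^\ast U\hookrightarrow i_H^\ast U
\]
together with a \(K\)-equivariant embedding \(T\hookrightarrow i_K^\ast U\), and the desired conclusion is an \(H\)-equivariant embedding \(\Map^K(H,T)\hookrightarrow i_H^\ast U\).

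First I would coinduce the embedding of the \(K\)-set \(T\): the functor \(\Map^K(H,-)\) is a right adjoint, hence preserves injections, so applying it to \(T\hookrightarrow i_K^\ast U\) produces an \(H\)-equivariant injection of the finite \(H\)-set \(\Map^K(H,T)\) into \(\Map^K(H,i_H^\ast\, i_K^\ast U)\). The key step is then to identify the target: the space \(\Map^K(H,i_K^\ast U)\) is the coinduced representation, which for finite index \([H:K]\) agrees with the induced representation, and the projection formula identifies the latter with \(\mathbb R\cdot(H/K)\otimes i_H^\ast U\). Composing the injection \(\Map^K(H,T)\hookrightarrow \mathbb R\cdot(H/K)\otimes i_H^\ast U\) with the isometric embedding furnished by the linear isometries hypothesis yields an \(H\)-equivariant embedding of the finite set \(\Map^K(H,T)\) into \(i_H^\ast U\). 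By Theorem~\ref{thm:Admiss} this is exactly the statement that \(\Map^K(H,T)\) is \(\cO_a\)-admissible, completing the verification.

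The main obstacle is the identification in the key step: recognizing that coinducing the restricted universe \(i_K^\ast U\) up to \(H\) produces exactly the tensor factor \(\mathbb R\cdot(H/K)\otimes i_H^\ast U\) that appears in the linear isometries admissibility condition for \(H/K\). This is the precise point at which the geometry of the two operads meshes---little-disks embeddings coinduce into a universe whose enlargement by the permutation representation \(\mathbb R\cdot(H/K)\) is governed by a linear-isometries isometry---and once it is in place the remainder of the argument is formal, using only that right adjoints preserve monomorphisms and that an isometric embedding is in particular an embedding of the underlying \(H\)-sets.
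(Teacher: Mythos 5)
Your proof is correct and follows essentially the same route as the paper: both reduce via Theorem~\ref{thm:OaOmCompatibility} to showing coinduction preserves little-disks admissibility, and both use the chain \(\Map^K(H,T)\hookrightarrow\Map^K(H,i_K^\ast U)\cong\mathbb R[H]\otimesover{\mathbb R[K]}U\cong\mathbb R\cdot(H/K)\otimes U\hookrightarrow U\), with the middle identifications coming from the agreement of induction and coinduction plus the projection formula. Your write-up just makes the projection-formula step more explicit than the paper does.
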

\begin{proof}
    By Theorem~\ref{thm:OaOmCompatibility}, it suffices to show that the \(\cO_a\)-admissible sets are closed under {\coinduction} along \(\cO_m\)-admissible sets. 
    
    When \(T=H/K\), the isometric embedding condition of Theorem~\ref{thm:Admiss} can be rewritten as the existence of an isometric embedding
    \[
    \mathbb R[H]\otimesover{\mathbb R[K]}U\hookrightarrow U.
    \]
    Using the isomorphism of induction with {\coinduction} in representations, we deduce our desired result. If \(T\) is a finite \(K\)-set that equivariantly embeds into \(i_K^\ast U\), then a choice of such an embedding gives an embedding
    \[
    \Map^K(H,T)\hookrightarrow\Map^K(H,i_K^\ast U)\cong \mathbb R[H]\otimesover{\mathbb R[K]}U\hookrightarrow U,
    \]
    as desired.
\end{proof}

\subsection{The multiplicative hull}
In Proposition~\ref{prop:OaHull}, we saw that there is a {\emph{smallest}} additive \(\cO_a\) compatible with any fixed multiplicative \(\cO_m\) using the intersection. The poset of indexing categories is actually a lattice, and intersection realizes the ``meet''. The join operation is more difficult to describe, but it has been identified by Rubin \cite{RubinOpLifts}. Rubin works with ``transfer systems'' or ``norm systems'', another equivalent form of the data of an indexing system defined independently by Rubin \cite{RubinSteinLin} and Balchin--Barnes--Roitzheim \cite{BBRAssoc}. For convenience, we state the result in indexing categories.

\begin{proposition}[{\cite[Proposition 3.1]{RubinOpLifts}}]\label{prop:Join}
    The join of two indexing categories \(\cO\) and \(\cO'\) is just the finite coproduct complete subcategory generated by them.
\end{proposition}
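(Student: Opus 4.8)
The plan is to prove the statement by showing directly that the finite coproduct complete subcategory generated by \(\cO\) and \(\cO'\) is already an indexing category; since it is visibly contained in every indexing category that contains both \(\cO\) and \(\cO'\), it must then be their join. Write \(\cC\) for the wide subgraph of \(\Set^G\) whose morphisms are obtained from the morphisms of \(\cO\cup\cO'\) by closing under composition and finite disjoint union. By construction \(\cC\) is wide and contains every isomorphism (since \(\cO\) does, as in the remark that the twist map lies in any indexing category), and it is closed under finite coproducts. Moreover any indexing category \(\ccD\) with \(\cO,\cO'\subseteq\ccD\) contains \(\cO\cup\cO'\) and is closed under composition and finite coproduct, hence contains \(\cC\). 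So \(\cC\) sits below every upper bound of \(\{\cO,\cO'\}\), and the entire content of the proposition is the single claim that \(\cC\) is pullback stable; granting that, \(\cC\) is itself an indexing category and therefore the least upper bound, i.e. the join \(\cO\vee\cO'\).

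First I would record a normal form for the morphisms of \(\cC\): every morphism can be written as a composite \(h_k\circ\dots\circ h_1\) in which each ``layer'' \(h_i\) is a finite coproduct of morphisms each lying in \(\cO\) or in \(\cO'\). The class of such composites plainly contains \(\cO\cup\cO'\) and is closed under composition. It is also closed under finite coproduct: one pads two such composites to a common length using identity layers (identities lie in \(\cO\) by wideness), and then uses that the coproduct of morphisms is compatible with composition layer by layer, \((h_2\amalg h_2')\circ(h_1\amalg h_1')=(h_2\circ h_1)\amalg(h_2'\circ h_1')\). Hence this class is all of \(\cC\).

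The heart of the argument is pullback stability, which I would prove by induction on the layer-length of the normal form, in the quantified form: for every morphism \(f\) of \(\cC\) and every morphism \(g\) of \(\Set^G\) with the same target, the pullback \(g^\ast f\) lies in \(\cC\). In the base case \(f=\amalg_i f_i\) is a single layer with each \(f_i\in\cO\cup\cO'\); by extensivity of \(\Set^G\) (pullbacks distribute over finite disjoint unions), \(g^\ast f\) is the coproduct of the pullbacks of the individual \(f_i\) along the corresponding restrictions of \(g\), each of which lies in \(\cO\) or \(\cO'\) by the pullback stability of those indexing categories, so the coproduct lies in \(\cC\). For the inductive step I peel off the last layer, \(f=f_k\circ(f_{k-1}\circ\dots\circ f_1)\): pulling back \(f_k\) along \(g\) yields a pullback in \(\cC\) (base case) together with an induced map \(g'\) on the pulled-back source, and the pasting lemma for pullback squares identifies \(g^\ast f\) with the composite of that pullback with \(g'^\ast(f_{k-1}\circ\dots\circ f_1)\), which lies in \(\cC\) by the inductive hypothesis. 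Closure under composition then finishes the step.

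The only genuine subtlety, and where I expect the bookkeeping to need care, is precisely that the pasting reduction forces one to pull back the shorter composite along the \emph{newly produced} map \(g'\) rather than the original \(g\); this is exactly why the inductive statement must quantify over all \(g\) with the correct target, not a fixed one. With that phrasing in hand, the three inputs—extensivity of \(\Set^G\), the individual pullback stability of \(\cO\) and \(\cO'\), and the pullback pasting lemma—combine to give pullback stability of \(\cC\), and no closure under pullback beyond the coproduct completion is required.
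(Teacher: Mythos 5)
Your proof is correct, and it is worth noting up front that the paper does not actually prove this proposition: it imports it from Rubin, who establishes the corresponding statement in the equivalent language of transfer systems (relations on the subgroup lattice, where the join is shown to be the transitive closure of the union, with closure under conjugation and restriction coming for free). Your argument is the direct indexing-category translation of that: your induction on the number of layers in the normal form, peeling off the last layer and pulling back the remainder along the newly induced map \(g'\), is the categorical counterpart of intersecting a chain \(K_0\le K_1\le\dots\le K_n\) of admissible steps with a subgroup \(L\) and observing that each intersected step is again admissible for \(\cO\) or \(\cO'\). You correctly isolate the one point where the bookkeeping matters --- the inductive hypothesis must quantify over \emph{all} maps \(g\) with the given target, since pasting replaces \(g\) by \(g'\) --- and the base case via extensivity of \(\Set^G\) plus pullback stability of each of \(\cO\) and \(\cO'\) is exactly right. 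What your route buys is a self-contained proof that never leaves the category-of-\(G\)-sets formulation and so does not invoke the indexing-category/transfer-system dictionary; what Rubin's route buys is a purely finite combinatorial check on the poset of subgroups. Both make transparent the slogan the paper draws from the proposition, namely that pullback stability is automatic once one closes under composition and finite coproducts.
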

In other words, pullback stability is automatic. What this means for us, however, is that we can join together multiplicatively compatible indexing categories.

\begin{proposition}\label{prop:OmHull}
    Let \(\cO_a\) be an indexing category. If \((\cO_a,\cO_m)\) and \((\cO_a,\cO_m')\) are both compatible, then \(\cO_a\) is complatible with the join \(\cO_m\vee\cO_m'\).
\end{proposition}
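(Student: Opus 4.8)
The plan is to verify directly that $\cO_m\vee\cO_m'$ distributes over $\cO_a$ in the sense of Definition~\ref{def:Compatible}, i.e.\ that $\Pi_g\big((\cO_a)_{/S}\big)\subset(\cO_a)_{/T}$ for every map $g\colon S\to T$ in the join. The key structural input is Proposition~\ref{prop:Join}: because the join is the finite coproduct complete subcategory generated by $\cO_m\cup\cO_m'$ (with pullback stability coming for free), every morphism $g$ of $\cO_m\vee\cO_m'$ can be written as a finite coproduct of composites $h_k\circ\cdots\circ h_1$ in which each $h_i$ lies in $\cO_m$ or in $\cO_m'$. So it suffices to show that the class of maps whose dependent product preserves $\cO_a$ is closed under composition and under finite coproducts.

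For composition I would use the functoriality of the dependent product recorded at the start of Section~\ref{sec:ReWrite}, namely $\Pi_{h'\circ h}\cong\Pi_{h'}\circ\Pi_h$. Since $(\cO_a,\cO_m)$ and $(\cO_a,\cO_m')$ are both compatible, the defining condition of Definition~\ref{def:Compatible} says that $\Pi_{h_i}$ carries $\cO_a$-slices into $\cO_a$-slices for \emph{every} map $h_i$ in $\cO_m$ or in $\cO_m'$ (not merely orbit maps); composing these gives that $\Pi_{h_k\circ\cdots\circ h_1}$ does as well. For coproducts I would invoke the disjoint-union formula $\Pi_{f\amalg g}(h)\cong\Pi_f(h_T)\amalg\Pi_g(h_U)$ from Section~\ref{sec:ReWrite}, together with the facts that an $\cO_a$-map over a disjoint union splits into $\cO_a$-maps over each summand (pullback stability) and that $\cO_a$, being finite coproduct complete, is closed under disjoint unions. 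Chaining these two closure properties shows $\Pi_g$ preserves $\cO_a$ for arbitrary $g$ in the join, which is exactly compatibility of $(\cO_a,\cO_m\vee\cO_m')$.

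The point I expect to need the most care is the structural description of morphisms in the join extracted from Proposition~\ref{prop:Join}. A priori, closing $\cO_m\cup\cO_m'$ under composition and coproducts need not produce a pullback-stable --- hence indexing --- category, and one might worry that generating the join also requires closing under pullback, which could introduce maps that are not visibly composites of maps in $\cO_m$ or $\cO_m'$. Proposition~\ref{prop:Join} is precisely what removes this worry: pullback stability is automatic, so no morphisms beyond coproducts of composites appear. It is worth emphasizing that each individual indexing category $\cO_m$ and $\cO_m'$ is already pullback stable, so restricting any of its maps to a subgroup stays inside that same category; the only genuinely new maps in the join arise from honest composites that alternate between $\cO_m$ and $\cO_m'$, and these are exactly the ones handled by functoriality of $\Pi$. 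Once this description is in hand the remaining bookkeeping is routine.
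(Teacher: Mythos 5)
Your proposal is correct and follows essentially the same route as the paper: both arguments rest on Proposition~\ref{prop:Join} to present morphisms of the join as (coproducts of) composites of maps each lying in $\cO_m$ or $\cO_m'$, and then apply the functoriality $\Pi_{h'\circ h}\cong\Pi_{h'}\circ\Pi_h$ together with the two compatibility hypotheses. The only cosmetic difference is that the paper first invokes the reductions behind Theorem~\ref{thm:OaOmCompatibility} to restrict attention to maps of orbits, whereas you handle the coproduct bookkeeping directly via the formula $\Pi_{f\amalg g}(h)\cong\Pi_f(h_T)\amalg\Pi_g(h_U)$; these amount to the same thing.
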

\begin{proof}
    One way to reinterpret Proposition~\ref{prop:Join} is that a map of orbits \(G/H\to G/K\) is in the join if and only if it can be written as a composition of maps 
    \[
    G/H\xrightarrow{f_0} G/H_1\xrightarrow{f_1}\dots\xrightarrow{f_n} G/K,
    \]
    with the maps \(f_i\) for \(0\leq i\leq n\) in at least one of \(\cO_m\) or \(\cO_m'\). We assumed that \(\cO_a\) was compatible with both \(\cO_m\) and \(\cO_m'\), and hence it is closed under dependent products along any of the maps in them. The proof of Theorem~\ref{thm:OaOmCompatibility} shows it suffices to check closure on maps between orbits, so we are done.
\end{proof}

This means we can find a {\emph{largest}} multiplicative \(\cO_m\)
compatible with a given \(\cO_a\). 

\begin{corollary}
    For any \(\cO_a\), there is a largest \(\cO_m\) such that \((\cO_a,\cO_m)\) is compatible.
\end{corollary}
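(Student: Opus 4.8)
The plan is to realize the desired largest $\cO_m$ as the join of \emph{all} multiplicative indexing categories compatible with the fixed $\cO_a$, using Proposition~\ref{prop:OmHull} to see that this join remains compatible. The entire argument is a formal consequence of the pairwise closure statement already established, once finiteness is invoked.

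First I would fix $\cO_a$ and introduce the collection
\[
\mathcal{M}=\big\{\cO_m\mid (\cO_a,\cO_m)\text{ is compatible}\big\}.
\]
This set is non-empty: by Theorem~\ref{thm:OaOtrCompatible} the pair $(\cO_a,\cO^{tr})$ is always compatible, so $\cO^{tr}\in\mathcal{M}$. Next I would record that, because $G$ is finite, there are only finitely many indexing categories — an indexing system is pinned down by the finite data of which orbits $H/K$ are admissible as $H$ ranges over the finitely many subgroups and $K$ over the finitely many conjugacy classes of subgroups of $H$. Hence $\mathcal{M}$ is finite, and I may form the finite join
\[
\cO_m^{\max}:=\bigvee_{\cO_m\in\mathcal{M}}\cO_m
\]
inside the lattice of indexing categories, which by Proposition~\ref{prop:Join} is just the finite coproduct complete subcategory generated by the members of $\mathcal{M}$.

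It then remains to show $\cO_m^{\max}\in\mathcal{M}$, and for this I would iterate Proposition~\ref{prop:OmHull}: that result shows $(\cO_a,\cO_m\vee\cO_m')$ is compatible whenever $(\cO_a,\cO_m)$ and $(\cO_a,\cO_m')$ both are, and since $\cO_m^{\max}$ is a join of finitely many elements of $\mathcal{M}$, a straightforward induction on the number of joinands yields that $(\cO_a,\cO_m^{\max})$ is compatible. Maximality is then immediate, since $\cO_m^{\max}\geq\cO_m$ for every $\cO_m\in\mathcal{M}$ by the definition of the join. The only genuinely load-bearing point — and hence where I would be most careful — is the passage from the \emph{pairwise} statement of Proposition~\ref{prop:OmHull} to a join over the whole of $\mathcal{M}$; this reduction is legitimate precisely because the poset of indexing categories for a finite group is finite, so the relevant join is finite and the inductive argument terminates. (Absent finiteness one would instead need to check that compatibility is closed under arbitrary, rather than merely binary, joins, e.g. via a directedness argument.)
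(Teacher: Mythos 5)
Your argument is correct and is essentially the paper's own proof: the paper likewise forms the join of all $\cO_m$ compatible with the fixed $\cO_a$, notes non-emptiness via $\cO^{tr}$, and relies on Proposition~\ref{prop:OmHull} to conclude compatibility of the join. Your extra care in reducing the join over the whole (finite) family to iterated binary joins is a reasonable elaboration of a step the paper leaves implicit.
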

\begin{proof}
    We simply join together all \(\cO_m\) such that \((\cO_a,\cO_m)\) is compatible. The set of these is non-empty, because \(\cO^{tr}\) is compatible with all \(\cO_a\).
\end{proof}

\begin{remark}
    In contrast to Remark~\ref{rem:CantGrowOa}, if \((\cO_a,\cO_m)\) is compatible and \(\cO_m'\leq \cO_m\), then \((\cO_a,\cO_m')\) is compatible. This reinforces an underlying theme that \(\cO_m\) puts constraints on \(\cO_a\), but not vice versa.
\end{remark}

\section{Change of group}\label{sec:Dragons}

A basic property of Mackey functors that makes computation easier is
the fact that induction is naturally isomorphic to {\coinduction}.
This is a reflection of the Wirthmuller isomorphism of ``genuine''
equivariant stable homotopy theory, and it makes it relatively easy to
form various kinds of resolutions we might want for homological
algebra. We can view this as the \(G\)-equivariant version of
``additive'', since it means that finite sums indexed by the group are
finite products indexed by the group.  Work of Berman cleanly explains
this point~\cite{Berman}.

In incomplete Tambara functors, we have none of this. The coproduct
and product do not agree, and while the forgetful functor to
incomplete Tambara functors for a subgroup has both adjoints, they
never agree. The coproduct, product, and right adjoints are all
expressible as corresponding functors on the underlying Mackey
functors (the latter ones because the forgetful functor commutes with
limits). In certain cases, the left adjoint to the forgetful functor
is also expressible in terms of a functor on Mackey functors: the
norm. We close by giving a few easy results of this form for
bi-incomplete Tambara functors, and then stating some conjectures for
structure that would tie everything together.

\subsection{Restriction functors \& {\coinduction}}
One of the key technical lemmas used in studying incomplete Tambara functors was knowing when a pair of adjoint functors on a category extends to a pair of adjoint functors on polynomials in that category with some restricted class of norms (\cite[Theorem 2.17]{BHOTamb}). The main ingredients were 
\begin{enumerate}
    \item the image of induction forms an ``essential sieve'': if \(f\colon S\to G\timesover{H} T\) is a map, then this is isomorphic over \(G\timesover{H}T\) to a map of the form 
    \[
    G\timesover{H}f'\colon G\timesover{H}S'\to G\timesover{H}T,
    \]
    and 
    \item an \(H\)-equivariant map \(f\) is in \(i_H^\ast\cO\) if and only if \(G\timesover{H}f\) is in \(\cO\).
\end{enumerate}  
At no point in the proof did we use the fact that all transfers exist,
so we deduce that the result goes through in this setting without change.

\begin{proposition}
    Precomposition with induction and restriction, respectively give an adjoint pair of functors
    \[
        i_H^\ast\colon (\cO_a,\cO_m)\mhyphen\Tamb^G\rightleftarrows (i_H^\ast\cO_a,i_H^\ast\cO_m)\mhyphen\Tamb^H\colon \CoInd_{H}^G.
    \]
\end{proposition}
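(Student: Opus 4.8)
The plan is to deduce this from the general adjunction-lifting result \cite[Theorem 2.17]{BHOTamb}, applied to the induction--restriction adjunction $G\timesover{H}(\mhyphen)\leftadjoint i_H^\ast$ relating $\Set^H$ and $\Set^G$. The two functors in the statement are realized by precomposition: the restriction $i_H^\ast$ on Tambara functors is precomposition with induction on polynomials, since $i_H^\ast\mR(T)=\mR(G\timesover{H}T)$, while $\CoInd_H^G$ is precomposition with the restriction functor $i_H^\ast\colon\cP^G\to\cP^H$ on polynomials, since $\CoInd_H^G\mN(S)=\mN(i_H^\ast S)$. So the whole statement amounts to checking that induction and restriction induce functors of the bi-incomplete polynomial categories and that the resulting precomposition functors are adjoint.

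First I would verify that induction and restriction lift to functors between $\cP^H_{i_H^\ast\cO_a,i_H^\ast\cO_m}$ and $\cP^G_{\cO_a,\cO_m}$. Both $G\timesover{H}(\mhyphen)$ and $i_H^\ast$ preserve pullbacks and disjoint unions, so they extend to functors of the ambient polynomial categories $\cP^H$ and $\cP^G$; the content is that they respect the restricted classes of norms and transfers. For restriction one uses that $i_H^\ast$ carries $\cO_a$ into $i_H^\ast\cO_a$ and $\cO_m$ into $i_H^\ast\cO_m$, and for induction one uses the second ingredient recalled above---that an $H$-map $f$ lies in $i_H^\ast\cO$ exactly when $G\timesover{H}f$ lies in $\cO$---applied to both $\cO_a$ and $\cO_m$. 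Preservation of disjoint unions further gives preservation of the categorical product, so precomposition sends product-preserving functors to product-preserving functors and keeps us inside the relevant categories of semi-Tambara functors.

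Next I would produce the adjunction bijection
\[
(\cO_a,\cO_m)\mhyphen\Tamb^G\big(\mR,\CoInd_H^G\mN\big)\cong(i_H^\ast\cO_a,i_H^\ast\cO_m)\mhyphen\Tamb^H\big(i_H^\ast\mR,\mN\big).
\]
The key input is the essential-sieve property: any map $S\to G\timesover{H}T$ is isomorphic over $G\timesover{H}T$ to an induced map $G\timesover{H}f'$, so that polynomials in $\cP^G$ with target an induced object are matched bijectively with polynomials in $\cP^H$. This is precisely the comparison that builds the unit and counit, and the construction is the same formal manipulation as in the additively complete case.

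The hard part here is not really an obstacle but a verification, and it is exactly the point emphasized in the surrounding text: nothing in the proof of \cite[Theorem 2.17]{BHOTamb} used that all transfers are present---it only invoked the compatibility relations attached to the particular maps occurring in a given polynomial. Since the essential-sieve property and the indexing-category characterization persist for the additive indexing category $\cO_a$ as well, using that both induction and restriction preserve indexing categories \cite[Propositions 3.13 \& 6.3]{BHOTamb}, the entire argument transports verbatim to the bi-incomplete setting and yields the claimed adjoint pair.
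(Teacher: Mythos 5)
Your proposal is correct and follows essentially the same route as the paper: both reduce the statement to the adjunction-lifting result of \cite[Theorem 2.17]{BHOTamb}, citing the essential-sieve property of induction and the characterization of $i_H^\ast\cO$ via induced maps, and both observe that the original argument never uses the presence of all transfers, so it transports verbatim to the bi-incomplete setting. Your write-up is somewhat more explicit about verifying that induction and restriction induce functors of the restricted polynomial categories, but the substance is identical.
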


The key feature here is that {\coinduction} and restriction are ``the same'' functor no matter which indexing categories we use: simply precompose with restriction or induction respectively. In particular, the values are determined when we forget all the way down to coefficient systems.

\subsection{Induction}

Since the restriction functors commute with limits, for formal reasons
we know they have left adjoints. As in Mackey functors and incomplete
Tambara functors, these left adjoints are concisely described as left
Kan extensions.

\begin{definition}
    Let
    \[
        n_H^G\colon (i_H^\ast\cO_a,i_H^\ast\cO_m)\mhyphen\Tamb^H\to (\cO_a,\cO_m)\mhyphen\Tamb^G
    \]
    be the left Kan extension along the induction functor
    \[
        G\timesover{H}(\mhyphen)\colon \cP^H_{i_H^\ast\cO_a,i_H^\ast\cO_m}\to \cP^G_{\cO_a,\cO_m}.
    \]
\end{definition}

Since induction is product preserving, the left Kan extension along it
preserves product preserving functors~\cite{KellyLack}. It is formal
that this gives the left adjoint.

\begin{proposition}
The functor \(n_H^G\) is left-adjoint to the restriction functor.
\end{proposition}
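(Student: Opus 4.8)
The plan is to realize the asserted adjunction as the restriction to product-preserving functors of the tautological left Kan extension adjunction, and then to carry it across the group-completion adjunction from semi-Tambara to Tambara functors. Write $\iota\colon\cP^H_{i_H^\ast\cO_a,i_H^\ast\cO_m}\to\cP^G_{\cO_a,\cO_m}$ for the induction functor $G\timesover{H}(\mhyphen)$, so that $n_H^G$ is the left Kan extension along $\iota$. For brevity let $\mathcal T^G$ and $\mathcal T^H$ denote the two categories of Tambara functors in the statement, let $\mathcal A'$ and $\mathcal B'$ denote the corresponding categories of semi-Tambara functors, and let $\mathcal A=[\cP^H_{i_H^\ast\cO_a,i_H^\ast\cO_m},\Set]$ and $\mathcal B=[\cP^G_{\cO_a,\cO_m},\Set]$ be the ambient $\Set$-valued functor categories. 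By the restriction--coinduction adjunction established above, the restriction functor $i_H^\ast$ is precisely precomposition with $\iota$; and on $\mathcal A$ and $\mathcal B$ the universal property of the left Kan extension supplies the standard adjunction $\mathrm{Lan}_\iota\dashv\iota^\ast$, natural in both variables.

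First I would check that $\mathrm{Lan}_\iota$ and $\iota^\ast$ both preserve product-preserving functors. For $\iota^\ast$ this is immediate: disjoint union is the categorical product in these polynomial categories (Proposition~\ref{prop:ProductsExist}) and $\iota$ preserves it, so precomposition with $\iota$ sends a product-preserving functor to a product-preserving functor. For $\mathrm{Lan}_\iota$ this is exactly the cited consequence of induction being product-preserving~\cite{KellyLack}. I would then invoke the elementary principle that an adjunction $L\dashv R$ between $\mathcal A$ and $\mathcal B$ restricts to full subcategories $\mathcal A'\subseteq\mathcal A$, $\mathcal B'\subseteq\mathcal B$ as soon as $L(\mathcal A')\subseteq\mathcal B'$ and $R(\mathcal B')\subseteq\mathcal A'$: for $F\in\mathcal A'$ and $P\in\mathcal B'$ the natural chain
\[
\mathcal B'(LF,P)=\mathcal B(LF,P)\cong\mathcal A(F,RP)=\mathcal A'(F,RP)
\]
has outer equalities by fullness and middle isomorphism the ambient adjunction. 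Applied to the subcategories of product-preserving functors, this produces the adjunction $\mathrm{Lan}_\iota\dashv\iota^\ast$ at the level of semi-Tambara functors, $\mathcal A'\rightleftarrows\mathcal B'$.

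The step needing genuine care is the passage to honest Tambara functors: the Kan extension is computed pointwise in $\Set$, and a pointwise colimit of a group-valued diagram is in general only a commutative monoid, so $\mathrm{Lan}_\iota$ of a Tambara functor is a priori merely a semi-Tambara functor and must be group-completed. Concretely, I would take $n_H^G=\kappa^G\circ\mathrm{Lan}_\iota\circ U^H$, where $U$ denotes the inclusion of Tambara functors among semi-Tambara functors and $\kappa$ its left adjoint, objectwise group completion (the latter landing in Tambara functors by~\cite{Tambara}), with superscripts indicating the ambient group. Using the commuting square $U^H\circ i_H^\ast=\iota^\ast\circ U^G$ — both send $\mR$ to $\mR\circ\iota$ — the hom-set chain
\[
\mathcal T^G(n_H^G F,\mR)\cong\mathcal B'(\mathrm{Lan}_\iota U^H F,U^G\mR)\cong\mathcal A'(U^H F,\iota^\ast U^G\mR)=\mathcal A'(U^H F,U^H i_H^\ast\mR)\cong\mathcal T^H(F,i_H^\ast\mR)
\]
exhibits $n_H^G\dashv i_H^\ast$: the first isomorphism is $\kappa^G\dashv U^G$, the second is the semi-Tambara adjunction of the previous paragraph, and the last uses that $U^H$ is fully faithful. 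The main obstacle is thus not any hard computation but correctly threading the group-completion adjunction through the fully faithful right adjoint $U$; once the commuting square and fullness are in hand, the rest is the formal bookkeeping the statement anticipates.
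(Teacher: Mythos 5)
Your proof is correct and follows essentially the same route as the paper, whose entire argument is the assertion that the adjunction is formal once one knows (via Kelly--Lack) that left Kan extension along the product-preserving induction functor preserves product-preserving functors; you have simply filled in that formal bookkeeping, restricting the ambient $\mathrm{Lan}_\iota\dashv\iota^\ast$ adjunction to the full subcategories of product-preserving functors. The one place you go beyond the paper is in threading the objectwise group completion $\kappa^G$ through the fully faithful inclusion of Tambara functors into semi-Tambara functors --- a legitimate point of care, since the paper's definition of $n_H^G$ as the bare left Kan extension leaves implicit that its values are a priori only commutative monoids, and your hom-set chain shows the group-completed version is the actual left adjoint.
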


In Mackey functors, this left adjoint is actually isomorphic to the right adjoint. In \(\cO_a\)-Mackey functors, this is not always the case. Instead, we have such an isomorphism when \(G/H\) is admissible (this is \(\m\pi_0\) of \cite[Theorem 3.25]{BHOSpectra}).

\subsection{The norm \& externalized forms}

The key application of the theses of Mazur and of Hoyer was that the
norm functor on Mackey functors describes the left adjoint on Tambara
functors~\cite{MazurThesis, HoyerThesis}, and hence we have a
reinterpretation of Tambara functors as the \(G\)-commutative monoids
in the category of Mackey functors~\cite{HHLocalization}.  A similar
statement here would provide a clean, algebraic interpretation of
Corollary~\ref{cor:OaOmSymMonoidal}.  We present several conjectures
here. 

\begin{conjecture}\label{conj:NormsCompat}
    If \((\cO_a,\cO_m)\) is a compatible pair, then for every admissible \(H/K\) for \(\cO_m\), {\coinduction} restricts to a functor
    \[
        \Map^K(H,\mhyphen)\colon i_K^\ast\cO_a\to i_H^\ast\cO_a.
    \]
In this case, we will also say ``{\coinduction} preserves \(\cO_a\)''.
\end{conjecture}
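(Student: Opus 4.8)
The plan is to recognize the asserted functor as nothing more than the dependent product already appearing in the definition of compatibility, transported through the slice equivalences. First I would invoke \cite[Proposition 2.3]{HMazur}: under the equivalences $\Set^K\simeq\Set^G_{/(G/K)}$ and $\Set^H\simeq\Set^G_{/(G/H)}$ recalled in Section~\ref{ssec:Indexing}, the {\coinduction} functor $\Map^K(H,\mhyphen)$ is carried to the dependent product $\Pi_q$ along the canonical quotient $q\colon G/K\to G/H$. Under those same equivalences the restricted indexing categories $i_K^\ast\cO_a$ and $i_H^\ast\cO_a$ are precisely the slices $(\cO_a)_{/(G/K)}$ and $(\cO_a)_{/(G/H)}$, this being the map of posets from indexing categories to indexing systems. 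So the claim that $\Map^K(H,\mhyphen)$ restricts to a functor $i_K^\ast\cO_a\to i_H^\ast\cO_a$ is exactly the claim that $\Pi_q$ carries $(\cO_a)_{/(G/K)}$ into $(\cO_a)_{/(G/H)}$.

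Second, I would observe that the hypothesis that $H/K$ is $\cO_m$-admissible is, under $\Set^H\simeq\Set^G_{/(G/H)}$, precisely the statement that the orbit map $q\colon G/K\to G/H$ lies in $\cO_m$. With $q\in\cO_m$ in hand, Definition~\ref{def:Compatible} applied to $g=q$ gives the containment $\Pi_q\big((\cO_a)_{/(G/K)}\big)\subseteq(\cO_a)_{/(G/H)}$ on the nose, which is the desired functor. Alternatively one may feed $q\in\cO_m$ into the object-level reformulation of Theorem~\ref{thm:OaOmCompatibility}, which says that for every $\cO_a$-admissible $K$-set $T$ the coinduced $H$-set $\Map^K(H,T)$ is $\cO_a$-admissible; this is exactly the statement on objects.

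The one point requiring care is the passage from objects to morphisms. Theorem~\ref{thm:OaOmCompatibility} is phrased purely in terms of admissible sets, so taken by itself it only guarantees that coinduction sends $\cO_a$-admissible objects to $\cO_a$-admissible objects; one must still know that it sends the morphisms of $i_K^\ast\cO_a$ to morphisms of $i_H^\ast\cO_a$. This is why I would route the argument through $\Pi_q$ and the subcategory containment of Definition~\ref{def:Compatible}: since $\Pi_q$ is an honest functor of slice categories and compatibility is by definition a containment of subcategories, objects and morphisms are handled simultaneously and no separate morphism computation is needed. The only remaining bookkeeping is to confirm that the slice $(\cO_a)_{/(G/K)}$ matches $i_K^\ast\cO_a$ as a \emph{category}, and not merely on objects, which is part of the indexing-category/indexing-system dictionary already established in Section~\ref{ssec:Indexing}.
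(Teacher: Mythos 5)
First, note that the paper offers no proof of this statement: it is stated as Conjecture~\ref{conj:NormsCompat} and left open, so any short formal derivation should be treated with suspicion. Your argument does not close the gap, and the place it breaks is exactly the ``remaining bookkeeping'' you defer to the end. The slice category \((\cO_a)_{/(G/K)}\) and the restricted indexing category \(i_K^\ast\cO_a\) are \emph{not} the same category under the equivalence \(\Set^G_{/(G/K)}\simeq\Set^K\). The objects of \((\cO_a)_{/(G/K)}\) are the morphisms of \(\cO_a\) with target \(G/K\), i.e.\ precisely the \emph{admissible} \(K\)-sets; by contrast \(i_K^\ast\cO_a\) is a \emph{wide} subcategory of \(\Set^K\) containing every \(K\)-set, with morphisms those \(f\) such that \(G\times_K f\in\cO_a\). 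The indexing-category/indexing-system dictionary identifies the objects of the slice with the value \(\cO_a(K)\) of the indexing \emph{system}, not with the category \(i_K^\ast\cO_a\). So even granting your reading of Definition~\ref{def:Compatible} as a containment of subcategories, the hypothesis only controls \(\Pi_q\) on maps \(X_1\to X_2\) over \(G/K\) whose structure maps \(X_i\to G/K\) already lie in \(\cO_a\); it says nothing about the many morphisms of \(i_K^\ast\cO_a\) between non-admissible objects (for instance a fold map \(K/L\amalg K/L\to K/L\) with \(K/L\) not admissible, or an orbit map \(K/L_1\to K/L_2\) whose admissibility condition lives at \(L_2\), invisible to the slice over \(G/K\)).

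Moreover, the reading of compatibility you rely on is stronger than the one the paper actually uses. Theorem~\ref{thm:OaOmCompatibility} asserts that compatibility is \emph{equivalent} to a purely object-level condition (closure of admissible sets under coinduction), and both the proof of Theorem~\ref{thm:Compatible} and the reduction via \cite[Proposition 2.3]{HMazur} only ever evaluate \(\Pi_q\) on objects of the slice. If Definition~\ref{def:Compatible} secretly contained the morphism-level statement, that equivalence would itself require a proof of (a version of) the conjecture. The genuine content of Conjecture~\ref{conj:NormsCompat} is precisely the passage from closure on admissible objects to closure on all morphisms of the wide subcategory --- the ``twisted version of Lemma~\ref{lem:ClosureUnderProducts}'' the authors flag --- and this requires an actual combinatorial argument (e.g.\ tracking the stabilizers of the orbits of \(\Map^K(H,T_1)\to\Map^K(H,T_2)\) via double cosets), not a formal rerouting through \(\Pi_q\). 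As written, your proof assumes the conclusion at the step where objects and morphisms are claimed to be ``handled simultaneously.''
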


Since these are wide subcategories, this conjecture is really a statement about the maps in the category being closed under {\coinduction}. This is a twisted version of Lemma~\ref{lem:ClosureUnderProducts}, and it gives the twisted version of Corollary~\ref{cor:AOSymMonoid}.

\begin{proposition}
    If {\coinduction} from \(K\) to \(H\) preserves \(\cO\), then it induces a functor
    \[
        \cA^K_{i_K^\ast\cO}\to \cA^{H}_{i_H^\ast\cO}.
    \]
\end{proposition}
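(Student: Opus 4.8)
The plan is to construct the functor by hand: coinduction \(\Map^K(H,\mhyphen)\) is a covariant functor on finite \(K\)-sets, so I can apply it to objects and to both legs of each span, and the only genuine content is checking compatibility with the pullback composition of spans. First I would define the functor on objects by \(S\mapsto\Map^K(H,S)\), which is again finite since \(|\Map^K(H,S)|=|S|^{[H:K]}\). On a morphism, represented by a span \([S\xleftarrow{p}U\xrightarrow{h}T]\) with \(h\in i_K^\ast\cO\), I would apply \(\Map^K(H,\mhyphen)\) levelwise to obtain
\[
\big[\Map^K(H,S)\xleftarrow{\Map^K(H,p)}\Map^K(H,U)\xrightarrow{\Map^K(H,h)}\Map^K(H,T)\big].
\]

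Next I would verify that this lands in \(\cA^H_{i_H^\ast\cO}\) and is well defined on isomorphism classes. The right leg \(\Map^K(H,h)\) lies in \(i_H^\ast\cO\) precisely by the hypothesis that coinduction preserves \(\cO\); the left leg is unconstrained in both source and target categories, so there is nothing to check there, and finiteness was already noted. Well-definedness on isomorphism classes is immediate from functoriality of \(\Map^K(H,\mhyphen)\): an isomorphism of spans is an isomorphism \(U\xrightarrow{\cong}U'\) commuting with both legs, and coinduction carries it to an isomorphism \(\Map^K(H,U)\xrightarrow{\cong}\Map^K(H,U')\) commuting with the coinduced legs. Identities are preserved because coinduction fixes identity maps, so \([S\xleftarrow{\id}S\xrightarrow{\id}S]\) is sent to the identity span on \(\Map^K(H,S)\).

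The crux, and the step I expect to be the only real obstacle, is functoriality with respect to composition. Recall that the composite of \([S\leftarrow U\xrightarrow{h}T]\) and \([T\leftarrow V\to W]\) is formed by taking the pullback \(U\timesover{T}V\) and reading off the two outer legs. Since \(\Map^K(H,\mhyphen)\) is right adjoint to the restriction functor \(i_K^\ast\colon\Set^H\to\Set^K\), it preserves all limits, and in particular the canonical comparison map
\[
\Map^K(H,U\timesover{T}V)\xrightarrow{\ \cong\ }\Map^K(H,U)\timesover{\Map^K(H,T)}\Map^K(H,V)
\]
is an isomorphism. Combining this with the fact that coinduction preserves composition of the maps forming the outer legs, the coinduction of the composite span is canonically isomorphic to the composite of the two coinduced spans, which is exactly what functoriality demands. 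I would arrange the write-up so that this pullback-preservation of coinduction does all the work, with everything else following formally from \(\Map^K(H,\mhyphen)\) being a functor.
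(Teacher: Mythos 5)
Your proof is correct, and it fleshes out exactly the argument the paper leaves implicit: the paper states this proposition without proof, remarking only that since the constraint in $\cA^K_{i_K^\ast\cO}$ is a condition on the right leg alone, the whole content is that coinduction preserves that class of maps. Your identification of the one nontrivial formal step --- that $\Map^K(H,\mhyphen)$, being right adjoint to restriction, preserves the pullbacks used to compose spans --- is precisely what makes the rest routine, and the remaining checks (well-definedness on isomorphism classes, identities) are handled correctly.
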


This is the heart of generalizing the Hoyer--Mazur norm, and it gives
us the following definition which we conjecture is correct.

\begin{definition}\label{def:Norms}
    If {\coinduction} from \(K\) to \(H\) preserves \(\cO\), then left Kan extension gives a norm functor
    \[
    {}_{\cO}N_K^H\colon\cO\mhyphen\Mackey^K\to\cO\mhyphen\Mackey^H
    \]
    that commutes with the box product.
\end{definition}

Definition~\ref{def:Norms} says that the category of \(\cO_a\)-Mackey functors is naturally a symmetric monoidal \(\cO_m\)-Mackey functor under the box product and norms: the restriction maps are the ordinary restrictions (which are strong symmetric monoidal) and the transfer maps are the norms. 

\begin{warning}
    Just as the box product depends heavily on \(\cO\), ranging from the levelwise tensor product (for \(\cO^{tr}\)) to the usual box product on Mackey functors, so too will any norms. Different \(\cO\) will give different definitions of the norm.
\end{warning}

The symmetric monoidal \(\cO_m\)-Mackey structure gives a collection of endofunctors of \(i_H^\ast\cO_a\)-Mackey functors for all \(H\): for any admissible \(H/K\) for \(\cO_m\), let
\[
{}_{\cO_a}N^{H/K}:={}_{\cO_a} N_K^Hi_K^\ast.
\]
This functor is isomorphic to the left Kan extension along \(\Map(H/K,\mhyphen)\) \cite{WittGreen}.
We should think of this as the categorical version of the formula expressing the action of the Burnside ring of finite \(H\)-sets on any Mackey functor \(\mM\) evaluated at \(G/H\):
\[
[H/K]\cdot m=tr_{K}^H res_K^H(m).
\]
Multiplying together the norm functors gives us an endofunctor for any \(\cO_m\)-admissible \(H\)-set \(T\): writing \(T\) as \(T=H/K_1\amalg\dots\amalg H/K_n\), let
\[
{}_{\cO_a}N^T(\mM):= {}_{\cO_a}N^{H/K_1}\mM\Box\dots\Box {}_{\cO_a} N^{H/K_n}\mM.
\]

With this, we conjecture the general external form of a \((\cO_a,\cO_m)\)-Tambara functor.

\begin{conjecture}\label{conj:OmCommMonoids}
A \((\cO_a,\cO_m)\)-Tambara functor is a \(\cO_m\)-commutative
monoid in the symmetric monoidal \(\cO_m\)-Mackey functor of
\(\cO_a\)-Mackey functors. 
\end{conjecture}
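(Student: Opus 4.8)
The plan is to prove the asserted equivalence of categories by following the Hoyer--Mazur identification of genuine $G$-Tambara functors with $G$-commutative monoids in Mackey functors \cite{HoyerThesis,MazurThesis,HHLocalization}, systematically replacing the genuine norm functor by the $\cO_a$-incomplete norm ${}_{\cO_a}N_K^H$ of Definition~\ref{def:Norms} (whose construction is the content of Conjecture~\ref{conj:NormsCompat}). First I would build a functor
\[
\Phi\colon (\cO_a,\cO_m)\mhyphen\Tamb\longrightarrow \bigl\{\cO_m\text{-commutative monoids in }\cO_a\mhyphen\Mackey\bigr\}
\]
as follows. The inclusion of compatible pairs $(\cO_a,\cO^{tr})\subseteq(\cO_a,\cO_m)$, whose left-hand pair is compatible by Theorem~\ref{thm:OaOtrCompatible}, gives a forgetful functor to $(\cO_a,\cO^{tr})$-Tambara functors, and these are exactly $\cO_a$-Green functors; applying this after each restriction $i_H^\ast$ produces the underlying commutative-monoid object in $\cO_a$-Mackey functors at every level. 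The norm maps $N_g$ attached to the $\cO_m$-admissible orbit maps $G/K\to G/H$ in $\cP^G_{\cO_a,\cO_m}$ then provide the multiplicative-transfer data making this into an $\cO_m$-commutative monoid.

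For the inverse $\Psi$, I would use the generators-and-relations presentation of polynomials from Section~\ref{ssec:Polynomials}: a product-preserving functor out of $\cP^G_{\cO_a,\cO_m}$ is the same datum as an assignment of restrictions $R_f$, additive transfers $T_h$ (with $h\in\cO_a$), and multiplicative norms $N_g$ (with $g\in\cO_m$) satisfying the three families of interchange relations. Given an $\cO_m$-commutative monoid, its underlying $\cO_a$-Green functor supplies $R_f$ and $T_h$ together with the $R$--$T$ Frobenius/pullback relations, and its monoid multiplication together with its norm structure supplies the $N_g$; the $R$--$N$ interchange then merely records that $N_K^H$ is computed from restriction and the monoid structure, which is exactly the strong monoidality of the norm asserted in Definition~\ref{def:Norms}. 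The remaining task is to verify the $T$--$N$ interchange and that $\Phi$ and $\Psi$ are mutually inverse, both of which reduce to a diagram chase once the key compatibility below is established.

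The crux, and the step I expect to be the main obstacle, is the $T$--$N$ interchange
\[
N_g\circ T_h=T_{h'}\circ N_{g'}\circ R_{f'}
\]
dictated by the exponential diagram. Reading this relation through $\Phi$ and $\Psi$, it becomes the assertion that ${}_{\cO_a}N_K^H$ is strong symmetric monoidal for the box product and that norming a transfer unwinds, via coinduction, into a transfer of a norm --- precisely the content packaged by Definition~\ref{def:Norms}. Compatibility of $(\cO_a,\cO_m)$ is what makes this well-posed inside $\cP^G_{\cO_a,\cO_m}$: Theorem~\ref{thm:OaOmCompatibility} guarantees that the transfer $T_{h'}$ appearing on the right, whose target is a coinduced admissible set, again lies in $\cO_a$, so Tambara reciprocity may be expanded without escaping the category. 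The genuinely new difficulty, relative to Hoyer's theorem in the complete case, is that the box product itself depends on $\cO_a$ --- interpolating between the levelwise tensor product for $\cO^{tr}$ and the usual box product --- so I would need to check that ${}_{\cO_a}N_K^H$ distributes over this $\cO_a$-dependent box product and that the resulting distributivity (diagonal) maps agree, on the nose, with the norms-of-transfers prescribed by the exponential diagram. Once this is in hand, the norm functors of Definition~\ref{def:Norms} assemble the $i_H^\ast\cO_a$-Mackey functors into a symmetric monoidal $\cO_m$-Mackey functor --- the categorification of Corollary~\ref{cor:OaOmSymMonoidal} --- and the generators-and-relations comparison of $\Phi$ and $\Psi$ closes the argument.
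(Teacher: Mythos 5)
The statement you are trying to prove is explicitly labelled a \emph{conjecture} in the paper: the authors present it (together with Conjecture~\ref{conj:NormsCompat}) without proof at the end of Section~\ref{sec:Dragons}, so there is no proof in the paper against which to measure your argument. Your outline is a sensible adaptation of the Hoyer--Mazur strategy, and you have correctly located where the difficulty sits, but as written it is a plan rather than a proof, and it rests on unestablished foundations. Definition~\ref{def:Norms} is not an available tool: the paper only says the authors ``conjecture is correct'' that left Kan extension along coinduction produces a norm functor commuting with the box product, and that construction itself presupposes Conjecture~\ref{conj:NormsCompat} (that coinduction along $\cO_m$-admissible orbits preserves $\cO_a$ as a wide subcategory). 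You invoke both as if they were proved.

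The substantive gap is exactly the step you flag and then defer. In the complete case, the fact that the Hoyer--Mazur norm is strong symmetric monoidal for the box product and that its distributivity maps reproduce the exponential-diagram formula $N_g\circ T_h=T_{h'}\circ N_{g'}\circ R_{f'}$ is the technical heart of Hoyer's thesis, not a formality. In the bi-incomplete case the box product is the Day convolution on $\cA^G_{\cO_a}$ and so varies with $\cO_a$ (degenerating to the levelwise tensor product for $\cO^{tr}$), and the Frobenius relations only hold for maps in $\cO_a$; nothing in your sketch shows that the left Kan extension along $\Map^K(H,-)$ interacts with this restricted convolution in the required way, nor that the resulting diagonal maps agree on the nose with the norms-of-transfers coming from exponential diagrams in $\cP^G_{\cO_a,\cO_m}$. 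Saying ``I would need to check'' this is an accurate assessment of the problem, but it is the entire content of the conjecture; until that verification is supplied, both directions of your proposed equivalence $\Phi$ and $\Psi$ remain unconstructed.
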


Unpacking this, this would mean that a \((\cO_a,\cO_m)\)-Tambara functor is the following data:
\begin{enumerate}
    \item A \(\cO_a\)-Mackey functor \(\mR\)
    \item a unital commutative monoid structure on \(\mR\): \(\mR\Box\mR\to\mR\), and
    \item for every map of \(\cO_m\)-admissible \(H\) sets \(S\to T\), an ``external norm'': a map of commutative monoids
    \[
    {}_{\cO_a}N^S i_H^\ast\mR\to {}_{\cO_a}N^T i_H^\ast\mR.
    \]
\end{enumerate}
These are required to be compatible in the sense that the norm map for \(S\to T\) restricts to the norm map for the restriction of \(S\to T\) and the norm maps compose to give the norm map for the composite.

\printbibliography



\end{document}